\DeclareRobustCommand{\rchi}{{\mathpalette\irchi\relax}}
\newcommand{\irchi}[2]{\raisebox{\depth}{$#1\chi$}} 
\newcommand{\hlt}[1]{\textcolor{red}{#1}}
\setlist[enumerate]{leftmargin=*, labelindent=0pt, labelwidth=1.5em}
\DeclareMathOperator{\Span}{span}
\DeclareMathOperator{\TR}{trace}
\DeclareMathOperator{\Div}{div}
\DeclareMathOperator{\ord}{ord}
\DeclareMathOperator{\supp}{Supp}
\DeclareMathOperator{\Aut}{Aut}
\DeclareMathOperator{\Id}{Id}
\DeclareMathOperator{\mult}{mult}
\DeclareMathOperator{\Diag}{diag}
\DeclareMathOperator{\Sym}{Sym}
\DeclareMathOperator{\Alt}{Alt}
\newtheorem{theorem}{Theorem}[section]
\newtheorem{lemma}{Lemma}[section]
\newtheorem{corollary}{Corollary}[section]
\newtheorem{proposition}{Proposition}[section]
\newtheorem{definition}{Definition}[section]
\newtheorem{remark}{Remark}[section]
\numberwithin{equation}{section}
\theoremstyle{definition}
\begin{document}
\bibliographystyle{plain}

\author{Quo-Shin Chi, Zhenxiao Xie \& Yan Xu}
\thanks{This work was partially supported by NSFC No. 12171473 for the last two authors. The second author was also partially supported by the Fundamental Research Funds for Central Universities, and the third author was also partially supported by by NSFC No. 12301065 and Natural Science Foundation of Tianjin (22JCQNJC00880).}
\title{Classification of sextic curves  in the Fano 3-fold $\mathcal{V}_5$ with rational Galois covers in ${\mathbb P}^3$}

\begin{abstract} 
In this paper, we classify sextic curves in the Fano $3$-fold $\bf \mathcal{V}_5$ (the smooth quintic del Pezzo $3$-fold) that admit rational Galois covers in the complex ${\mathbb P}^3$. We show that the moduli space of such sextic curves is of complex dimension $2$ 
through the invariants of the engaged Galois groups for the explicit constructions. This raises the intriguing question of understanding the moduli space of sextic curves in ${\mathcal V}_5$ through their Galois covers in ${\mathbb P}^3$.
\end{abstract}

\maketitle

\vskip -0.6cm
{\footnotesize 
{\bf MSC(2020): Primary 53C42, 53C55; Secondary 14M15, 14J45, 14H45}

{\bf Keywords.} holomorphic curves, linear section, quintic del Pezzo 3-fold
}

\section{Introduction} The paper, representation theory of certain Galois groups in nature,  grew out of a quest to understand smooth rational curves in general, and, in particular, those of constant curvature in the complex Grassmannian $G(2,5)$, which are closely tied to the $\sigma$-model theory in particle physics (see e.g., \cite{Din-Zakrzewski1981}, \cite{ Delisle-Hussin-Zakrzewski2013}, \cite{Marsh}).


It has been a long-standing topic in Differential Geometry to study minimal surfaces to find, through the holomorphic data they enjoy, their deep relations to and applications in Differential Equations and Physics. Thanks to the foundational works of Din-Zakrzewski \cite{Din-Zakrzewski1980}, Chern-Wolfson \cite{Chern-Wolfson1987}, Eells-Wood \cite{Burstall-Wood1989}, Ulenbeck \cite{Uhlenbeck1989}, and others, it is known that any minimal 2-sphere immersed in compact symmetric spaces can be associated with a holomorphic curve in certain complex ambient spaces.

Among all minimal $2$-spheres, the classification of those with constant curvature in various symmetric spaces has consistently attracted attention. While such spheres exhibit rigidity in real and complex space forms \cite{Calabi1967, Bolton-Jensen-Rigoli-Woodward1988, Bando-Ohnita1987}, their classification in general symmetric spaces remains largely open. So far, only the case of the complex Grassmannian $G(2,4)$ has been fully resolved \cite{Chi-Zheng1989, ZQLi-ZuHuanYu1999}. For $G(2,5)$, a partial classification result for constantly curved holomorphic $2$-spheres was done in \cite{Jiao-Peng2004, Jiao-Peng2011}, where only those of degree no larger than $5$ appear.  Furthermore, a conjecture in \cite{Delisle-Hussin-Zakrzewski2013} suggests that holomorphic $2$-spheres of degrees $7$ to $9$ in $G(2,5)$ cannot have constant curvature.

For the case of the critical degree $6$ in $\mathrm{G}(2,5)$, only one example existed prior to our recent work \cite{Chi-Xie-Xu2024}, in which we constructed a real $2$-parameter family of constantly curved holomorphic $2$-spheres of degree $6$ by utilizing the structure of the Fano $3$-fold ${\mathcal{V}}_5$ in $\mathrm{G}(2,5)$. 
We say that a ${\mathbb P}^1$ of degree 6 (referred to as a {\em sextic curve} henceforth) in $\bf {\mathcal V}_5$ is {\em generally ramified} if it is tangent to the $1$-dimensional $PSL_2$-invariant orbit of $\bf {\mathcal V}_5$; otherwise, we say that it belongs to the {\em exceptional transversal family}. In that paper, we showed that a holomorphic $2$-sphere of constant curvature in $G(2, 5)$, $PGL(5)$-equivalent to a generally ramified 
sextic curve, is such that the $PGL(5)$-equivalence is induced by a diagonal matrix, so that all such $2$-spheres can be completely classified to form a semialgebraic moduli space of real dimension 2. Then it is natural to ask whether exceptional transversal family exists. This is the motivation of the current paper. 

To this end, we have established in \cite{Chi-Xie-Xu2024} that a sextic curve in $\bf {\mathcal V}_5$ admits a Galois cover in ${\mathbb P}^3$ with the Galois group a subgroup of the projective binary octahedral group isomorphic to $S_4$, where we view ${\mathbb P}^3$ as the closure of $PSL_2$ and the latter covers the $3$-dimensional $PSL_2$-orbit of $\bf {\mathcal V}_5$ with $S_4$ as the deck transformation group.

In the following, employing the representation theory of finite groups and their invariants, we will 
classify all sextic curves in $\bf \mathcal{V}_5$ each admitting a rational Galois cover in ${\mathbb P}^3$.

\begin{theorem}
The moduli space of sextic curves in the Fano $3$-fold $\bf \mathcal{V}_5$ each admitting a rational Galois cover in ${\mathbb P}^3$ is of {\rm (}complex{\rm )} dimension $2$, among which the space of exceptional transversal family is of dimension $1$. 
\end{theorem}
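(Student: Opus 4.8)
The plan is to stratify the moduli space according to the Galois group $G$ of the cover and to parametrize each stratum by the classical invariant theory (in the sense of Klein) of the corresponding finite subgroup of $PGL_2$. Recall that every sextic curve $C\subset\mathcal{V}_5$ under consideration lifts, under the $S_4$-cover $\pi\colon\mathbb{P}^3\dashrightarrow\mathcal{V}_5$ over the open $PSL_2$-orbit, to a rational curve $\tilde{C}\subset\mathbb{P}^3$ with $\tilde{C}/G\cong C$, where $G\leq S_4$ is the deck subgroup stabilizing $\tilde{C}$. First I would write $\tilde{C}$ as the image of a morphism $\gamma\colon\mathbb{P}^1\to\mathbb{P}^3$ and record the two symmetries it must respect: the source $\mathbb{P}^1$ carries the $G$-action through its image $\bar{G}\subset PGL_2$, the target $\mathbb{P}^3=\mathbb{P}(\mathrm{Mat}_{2\times 2})$ carries the $G$-action through the projective binary octahedral group, and $\gamma$ is $G$-equivariant for these. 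The degree-$6$ condition on $C$ then fixes, through the covering degree $|G|$, the degree of the binary forms making up $\gamma$.

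Next, for each conjugacy class of subgroups $G\leq S_4$ --- namely the cyclic groups $\mathbb{Z}/n$ $(n=1,2,3,4)$, the Klein four-groups, $S_3$, the dihedral group $D_4$, $A_4$, and $S_4$ itself --- I would decompose the restriction to $G$ of the representation of $S_4$ on the homogeneous coordinates of $\mathbb{P}^3$, and use the known generators of the ring of $\bar{G}$-invariant binary forms (for the octahedral case, the covariants of degrees $6,8,12$) to write the equivariant $\gamma$ in closed form. The components of $\gamma$ are thereby forced into specified isotypic pieces, and the free coefficients that survive are the raw parameters of the stratum.

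Then comes the dimension count. For each $G$ I would subtract from the number of raw parameters the dimension of the redundancies: reparametrizations of the source $\mathbb{P}^1$ by the normalizer of $\bar{G}$, overall scaling, and the residual $PSL_2=\Aut(\mathcal{V}_5)$-action preserving the configuration. The expectation is that the generic choice of cover --- which I would identify with the generally ramified case, where $\tilde{C}$ meets the fibre over the $1$-dimensional closed $PSL_2$-orbit tangentially --- sweeps out a $2$-dimensional stratum, matching the diagonal-matrix description obtained in \cite{Chi-Xie-Xu2024}, while the special configurations in which tangency to the closed orbit is lost form the exceptional transversal family of one lower dimension.

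The main obstacle I anticipate is twofold. First, the representation-theoretic bookkeeping: correctly identifying, for each $G$, which isotypic component of the coordinate representation of $\mathbb{P}^3$ each component of $\gamma$ must occupy, and hence which covariants are admissible. Second, and more delicate, is pinning down the tangency condition geometrically --- translating ``tangent to the $1$-dimensional $PSL_2$-orbit'' into a precise constraint on the leading coefficients of $\gamma$ --- so as to separate the generally ramified stratum from the exceptional transversal one and to verify that the latter has dimension exactly $1$. Once these are in place, taking the union of strata and noting that the exceptional transversal locus is of strictly smaller dimension yields total dimension $2$.
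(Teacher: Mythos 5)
Your skeleton---stratify by the Galois group, write the lift as a $G$-equivariant map $\mathbb{P}^1\to\mathbb{P}^3$ using the invariant theory of the binary polyhedral groups, then count surviving parameters modulo source and target symmetries---is indeed the paper's skeleton (its main construction theorem reduces everything to choosing a $2$-plane inside the isotypic piece $kW$, $k\ge 2$, of a one-dimensional character $W$ of $G^\ast$ in $V_n\otimes\mathbb{C}^2$). But one of your steps is simply wrong and would derail the execution: the claim that ``the degree-$6$ condition on $C$ fixes, through the covering degree $|G|$, the degree of the binary forms making up $\gamma$.'' The degree of the lift is \emph{not} determined by $|G|$; the correct constraint, coming from the intersection divisor $\mathcal{Q}$ with the quadric $Q_2$ and the fixed divisor $\mathcal{F}$ with $\deg\mathcal{F}=6(\deg g-\deg\varphi)$, is only the range $\deg\varphi\le \deg g\le\tfrac{3}{2}\deg\varphi$, refined group by group by analyzing which ramification points are forced onto the six free lines. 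Several degrees genuinely occur: $\deg g\in\{2,3\}$ for $C_2$, $\{3,4\}$ for $C_3$, $\{9,11\}$ for $D_4$, $\{25,29,31\}$ for $S_4$. This is not a technicality, because the exceptional transversal curves arise exactly at the non-minimal degrees, where $\mathcal{F}\ne 0$ forces whole principal $G$-orbits onto the free lines. Fixing one degree per group loses one of the two families outright.

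Second, your mechanism for separating the two families---tangency as the generic behavior of a $2$-dimensional stratum, with the exceptional transversal locus appearing as a codimension-one degeneration where tangency is lost---misrepresents how they sit relative to one another. In the paper, each stratum (a group $G$, an admissible $\deg g$, and a conjugacy class of embedding $\phi_2:G^\ast\to S_4^\ast$, enumerated including outer automorphisms, e.g.\ the two inequivalent Klein four-groups or the two order-four classes $T_{1,2}$ versus $T_{2,1}$) is \emph{uniformly} generally ramified or \emph{uniformly} exceptional transversal, decided by the divisor-theoretic criterion in terms of $\mathcal{Q}$, $\mathcal{F}$, $\mult_p(\varphi)$ and the $26$ octahedral centers---not by leading coefficients of $\gamma$. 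The exceptional families have dimension $1$ not because tangency is lost along a divisor in a $2$-parameter family, but because the condition that the points of $\supp\mathcal{F}$ lying over a principal orbit land on prescribed free lines imposes linear (perpendicularity) constraints cutting the raw parameter space $G(2,3)\cong\mathbb{P}^2$ down to a one-parameter family. Finally, your count omits the paper's last filter: several equivariant candidates (in the $C_2$, $D_2$ and $D_4$ cases, for instance) produce image curves lying in a hyperplane $\mathbb{P}^5$ and must be discarded as not being sextic rational normal curves; without this linear-fullness check the list of strata, and hence the dimension statement, is not yet justified.
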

Our classification via explicit construction reduces to finding a 2-plane in a k-fold copy of a 1-dimensional irreducible representation in $V_n \otimes \mathbb{C}^2$ for $k \geq 2$, where $V_n$ denotes the space of binary forms of degree $n$. 
The invariant theory of finite groups plays more of a pivotal role in our work 
as the group order gets larger. 
To this end, it is worth remarking that the well-known projection formula for calculating a summand in the canonical decomposition of a $G$-module of a finite group $G$, in our situation, turns out to engage in calculations so long that one needs to practically resort to a symbolic software for an answer without conceptualizing a coherent pattern. In contrast, our approach is to find a summand we seek by expressing a basis of it in terms of the generating invariants of $G$-representations, for which Proposition \ref{key basis prop} is the engine and from which the summand structure manifests itself.

 We note that Takagi and Zucconi in \cite{Takagi-1, Takagi-2} studied the moduli space of sextic curves in ${\mathcal{V}}_5$, 
 where they proved, modulo the ${PSL}_2$-action, that it has dimension $9$, which infers by the preceding theorem that the Galois lifts of generic sextic curves in $\bf {\mathcal V}_5$ are curves of higher genera with nontrivial automorphism groups. This naturally raises the intriguing question of understanding  the moduli space of sextic curves in  $\bf {\mathcal V}_5$ 
 via their Galois covers in ${\mathbb P}^3$. 

The paper is organized as follows. In section 2, we review some necessary background information about the representattion theory of finite groups in general and our work on the sextic curves in the Fano ${\mathcal V}_5$ \cite{Chi-Xie-Xu2024}. Section 3 explains the underlying idea of the classification scheme through the representations of the finite subgroups of $SU(2)$. In sections 4 through 6, we work out the cyclic cases. In sections 7 through 10, the dihedral cases are carried out, and  we classify in sections 11 and 12, respectively, the cases $A_4$ and $S_4$. 

\section{Preliminaries}

\subsection{Irreducible representations of $SL_2(\mathbb{C})$.} ~

Let $V_n$ be the the space of binary forms of degree $n$ in two variables $u$ and $v$, on which $SL_2({\mathbb C})$, to be denoted by $SL_2$ henceforth,
acts by 
\begin{small}$$
\aligned
&SL_2\times V_n \rightarrow V_n,\quad (A,f) &\mapsto  (A\cdot f)(u,v)\triangleq f (A^{-1}\cdot (u,v)^T). 
\endaligned
$$\end{small}
The spaces $V_n$ for $n\in\mathbb{Z}_{\geq 0}$ exhaust all finite-dimensional irreducible representations of $SL_2$. Furthermore, the symplectic form $\det(a,b)$ for $a,b\in(V_1)^* (\simeq {\mathbb C}^2$) induces a canonical $SL_2$-isomorphism $(V_1)^* \cong V_1$ via $e \mapsto \det(e,\cdot)$, where $SL_2$ acts on the row spaces $V_1$ and $(V_1)^*$ by right matrix multiplication, so that we may also work  with $V_n\otimes V_1$ henceforth as now $V_n\otimes V_1\simeq V_n\otimes (V_1)^*$ (in matrix terms, it is because {\small$A^{T}=JA^{-1}J^{-1}, J\triangleq \begin{pmatrix}0&-1\\1&0\end{pmatrix}, A\in SL_2$}).

\begin{lemma}\label{eigen} For $A\in SL_2$, 
the standard tensor product action  on 
$V_n\otimes {\mathbb C}^2$, where $p_n\otimes e_1+q_n\otimes e_2$ is written $(p_n,q_n)$ as usual, is given in matrix form by 
\begin{equation}\label{AB}\small{
A\otimes A: (p_n, q_n) \longmapsto (A\cdot p_n, A\cdot q_n)\, A^T}
\end{equation}
with matrix multiplication on the right. In particular, $p_n\otimes e_1 + q_n\otimes e_2$ spans a $1$-dimensional invariant subspace of a subgroup $G\subset SL_2$ with character $\chi$ {\rm (}see the next subsection for definition{\rm )} in $V_n\otimes {\mathbb C}^2$ if and only if, by \eqref{AB},
\begin{equation}\label{pqn}\small{
(A\cdot p_n,A\cdot q_n)=\chi(A)\, (p_n,q_n)\,(A^{-1})^T},
\end{equation}
\end{lemma}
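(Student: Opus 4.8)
The plan is to reduce both assertions to a direct expansion in the standard basis $\{e_1,e_2\}$ of $\mathbb{C}^2$, since the statement is essentially an unwinding of what the tensor product action $A\otimes A$ does to an element written as $p_n\otimes e_1+q_n\otimes e_2$. First I would write
\[
(A\otimes A)(p_n\otimes e_1+q_n\otimes e_2)=(A\cdot p_n)\otimes (Ae_1)+(A\cdot q_n)\otimes(Ae_2),
\]
where $A\cdot p_n$ is the $V_n$-action fixed in the preliminaries and $Ae_i$ is the ordinary column action on $\mathbb{C}^2$.

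The key step is then to record that $Ae_1$ and $Ae_2$ are the first and second columns of $A=(a_{ij})$, so that regrouping the four resulting terms by their $e_1$- and $e_2$-components gives the new coordinates $p_n'=a_{11}(A\cdot p_n)+a_{12}(A\cdot q_n)$ and $q_n'=a_{21}(A\cdot p_n)+a_{22}(A\cdot q_n)$. Reading $(p_n,q_n)$ as a row whose entries lie in $V_n$, this is precisely right multiplication of the row $(A\cdot p_n,A\cdot q_n)$ by $A^T$, which is the content of \eqref{AB}. I would note in passing that, although a transpose on the right naively suggests an anti-homomorphism, \eqref{AB} does define a genuine (left) representation: the order reversal in $B^TA^T=(AB)^T$ is exactly cancelled by the order in which the $V_n$-actions compose via $A\cdot(B\cdot p_n)=(AB)\cdot p_n$.

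For the second assertion I would simply unwind the definition of a $1$-dimensional invariant subspace affording the character $\chi$: the line spanned by $p_n\otimes e_1+q_n\otimes e_2$ is $G$-invariant with character $\chi$ precisely when $(A\otimes A)$ scales it by $\chi(A)$ for every $A\in G$, i.e. $(A\cdot p_n,A\cdot q_n)\,A^T=\chi(A)\,(p_n,q_n)$ by \eqref{AB}. Right-multiplying by the invertible matrix $(A^T)^{-1}=(A^{-1})^T$ yields \eqref{pqn}, and since this manipulation is reversible it gives the stated equivalence in both directions.

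The computation carries no real obstacle; the only point requiring care is the bookkeeping of conventions — namely that $\mathbb{C}^2$ carries the standard column action while $V_n$ carries the already-fixed binary-form action, and that the transpose lands on the right precisely because the coordinates of $Ae_i$ in $\{e_1,e_2\}$ are the columns, not the rows, of $A$. Keeping the row/column and left/right conventions consistent with the identification $V_1\simeq (V_1)^*$ recalled just before the lemma is the one place where a spurious transpose or sign could otherwise slip in.
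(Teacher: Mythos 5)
Your proof is correct, and it is exactly the direct verification that the paper leaves implicit (the paper only remarks that the lemma "is straightforward to verify"). Expanding $(A\otimes A)(p_n\otimes e_1+q_n\otimes e_2)$ in the basis $\{e_1,e_2\}$, identifying the coefficient matrix as $A^T$ acting on the right, and then cancelling the invertible factor $A^T$ to get \eqref{pqn} is precisely the intended argument, and your side remarks (that \eqref{AB} is a genuine representation despite the transpose, and that the transpose arises because the coordinates of $Ae_i$ are columns of $A$) are accurate.
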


The lemma is straightforward to verify. It is also directly checked that
\begin{equation}\label{V1 equi to natural action}
(A\cdot u,A\cdot v)=(u,v)(A^{-1})^T,\quad\forall~A\in  SL_2(\mathbb{C}).
\end{equation}
The Clebsch-Gordan formula states that  (assume $m\geq n$) 
\begin{small}$$ 
V_m\otimes V_n\cong V_{m+n}\oplus V_{m+n-2}\oplus\cdots \oplus V_{m-n}.
$$\end{small} 
Moreover, for any given number $p\in [0, n]$, the projection $V_m\otimes V_n  \rightarrow V_{m+n-2p}$ can be formulated by 
\begin{equation}\label{transvectant}\medmath{
(f,h)\mapsto (f,h)_{p}\triangleq \frac{(m-p)!(n-p)!}{m!n!}\sum_{i=0}^{p}(-1)^{i}\binom{p}{i}\frac{\partial^p f}{\partial u^{p-i}\partial v^i}\frac{\partial^p h}{\partial u^{i}\partial v^{p-i}},}
\end{equation}
which is $SL_2(\mathbb{C})$-equivariant and is called the \emph{$p$-th transvectant} \cite[Eq (2.1), p.~16]{MP}. 

\subsection{Representation of finite groups}~

In this subsection, we introduce some basic facts about the representation theory of finite groups 
 \cite{Serre1977}. The complex number field is enforced in the sequel.

Let $G$ be a finite group. Let $\rho:G\rightarrow GL(V)$ be a linear representation  over the vector space $V$. The function $\rchi_\rho(s):=\TR(\rho_s)$ for all  $s\in G$
is the \emph{character} of $\rho$.

Let $\rho^1:G\rightarrow GL(W_1)$ and $\rho^2:G\rightarrow GL(W_2)$ be two linear representations of $G$, and let $\rchi_1$ and $\rchi_2$ be their characters. Then
The character $\rchi$ of the direct sum representation $W_1\oplus W_2$ is equal to $\rchi_1+\rchi_2$, while
the character $\psi$ of the tensor product representation $W_1\otimes W_2$ is equal to $\rchi_1\cdot \rchi_2$.

A complex-valued function $f$ on $G$ is called a \emph{class function} if $f(tst^{-1})=f(s),~\forall t,s\in G$. For example, the characters of representations of $G$ are class functions. Let ${\mathcal C}$ be the space of class functions on $G$. Equip ${\mathcal C}$ with an inner product 
\begin{small}$$(\phi|\psi)\triangleq\frac{1}{|G|}\sum\limits_{t\in G}\phi(t)\overline{\psi(t)}.$$\end{small}

The number of irreducible representations of $G$ {\rm (}up to isomorphism{\rm )} is equal to the number of conjugacy classes of $G$, while
all the irreducible characters $\rchi_1,\rchi_2,\ldots,\rchi_h$ form an orthonormal basis of ${\mathcal C}$.  Moreover, two representations with the same character are isomorphic, and, in particular, $V$ with character $\phi$ is irreducible if and only if $(\phi|\phi)=1$.
 
 Suppose now a linear representation $V$ of $G$ with character $\rchi_V$ decomposes into a direct sum of irreducible representations 
$$V=n_1 W_1\oplus \cdots \oplus n_h W_h,$$
where $\{W_1,\cdots, W_h\}$ consists of non-isomorphic irreducible representations appearing in $V$, and $n_i=(\rchi_V|\rchi_{W_i})$ denotes the number of irreducible representations isomorphic to $W_j$ in the decomposition whose direct sum is denoted by  $n_j W_j$.

\begin{theorem}
Let $\rho:G\rightarrow GL(V)$ be a linear representation of a finite group $G$.
\begin{enumerate}
\item[{\rm(1)}] The 
decomposition 
$V=n_1 W_1\oplus \cdots \oplus n_h W_h$ does not depend on the initially chosen decomposition $W_1,\cdots,W_h$ 
of $V$ into irreducible representations.

\item[{\rm(2)}] The projection $p_i$ of $V$ onto $n_i W_i$ associated with this decomposition is given by 
{\small\begin{equation}\label{general projection formula}
p_i\triangleq \frac{n_i}{g}\sum\limits_{t\in G} \overline{\rchi_i(t)}\rho_t.
\end{equation}}

\end{enumerate}
\end{theorem}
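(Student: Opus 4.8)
The plan is to establish part (2) first and then read off part (1) as a formal consequence, the point being that the operator in \eqref{general projection formula} is manufactured entirely out of $\rho$, the group $G$, and the abstract irreducible character $\rchi_i$, with no reference to any chosen decomposition of $V$.

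For (2), write $g=|G|$ and set $p_i=\frac{n_i}{g}\sum_{t\in G}\overline{\rchi_i(t)}\,\rho_t$, where the scalar $n_i$ is read as the degree $\dim W_i$ of $W_i$ (this normalization is exactly what forces $p_i^2=p_i$ below). First I would check that $p_i$ commutes with every $\rho_s$: conjugating the sum by $\rho_s$ produces $\sum_t\overline{\rchi_i(t)}\,\rho_{sts^{-1}}$, and re-indexing the summation variable together with the fact that $\rchi_i$ is a class function returns the original sum. Next, fix any decomposition $V=\bigoplus_j U_j$ into irreducibles. Since each $U_j$ is $\rho$-invariant, it is $p_i$-invariant, and $p_i|_{U_j}$ is an endomorphism of $U_j$ commuting with the $G$-action; by Schur's lemma it is a scalar $\lambda_j\,\Id$. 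Taking traces gives $\lambda_j\dim U_j=\frac{1}{g}\sum_t\overline{\rchi_i(t)}\,\rchi_{U_j}(t)=(\rchi_{U_j}\,|\,\rchi_i)$, so that
\[
p_i|_{U_j}=\frac{n_i}{\dim U_j}\,(\rchi_{U_j}\,|\,\rchi_i)\,\Id .
\]
By the orthonormality of irreducible characters, $(\rchi_{U_j}\,|\,\rchi_i)$ vanishes unless $U_j\cong W_i$, in which case $\dim U_j=\dim W_i=n_i$ and the coefficient equals $1$. Hence $p_i$ is the identity on every summand isomorphic to $W_i$ and zero on all others, i.e. $p_i$ is the projection of $V$ onto the sum $n_iW_i$ of the copies of $W_i$, which is the claim of (2).

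For (1), I would simply observe that the right-hand side of \eqref{general projection formula} depends only on the elements $t\in G$, their matrices $\rho_t$, and the intrinsic character $\rchi_i$, and so $\im(p_i)$ is a subspace canonically attached to $(V,\rho)$. The computation above identifies $\im(p_i)$ with the span of all irreducible subrepresentations of $V$ isomorphic to $W_i$; since this span is intrinsic, the isotypic pieces $n_iW_i=\im(p_i)$, and hence the decomposition $V=\bigoplus_i\im(p_i)$, cannot depend on the initially chosen splitting $W_1,\dots,W_h$. To close the argument I would record the relations $p_ip_j=\delta_{ij}p_i$ and $\sum_i p_i=\Id$, both immediate from the restriction formula applied on each $U_j$, confirming that the $p_i$ are genuinely the projectors of a direct-sum decomposition.

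The content is entirely classical, so there is no serious obstacle; the only steps demanding care are choosing the correct normalizing constant (the degree $\dim W_i$, not the multiplicity, is what yields idempotency) and combining Schur's lemma with the already-stated orthonormality of characters to pin down each scalar $\lambda_j$. The one conceptual point worth emphasizing is that the uniqueness in (1) is not obtained by comparing two decompositions head-on, but is read off almost for free from the decomposition-independence of the explicit averaging operator constructed in (2).
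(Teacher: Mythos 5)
Your proof is correct, and there is nothing in the paper to compare it against: the paper quotes this theorem as standard background from Serre \cite{Serre1977} without proof, and your argument is precisely the classical one used there — build the averaging operator $p_i$, check it is $G$-equivariant, apply Schur's lemma on each irreducible summand of an arbitrary decomposition, and evaluate the resulting scalars by taking traces and invoking the orthonormality of irreducible characters; part (1) then falls out because $p_i$ is defined without reference to any decomposition. Two remarks. First, your insistence that the constant $n_i$ in \eqref{general projection formula} be the \emph{degree} $\dim W_i$ is exactly right and is a genuine point: the paper's preceding text defines $n_i$ as the multiplicity $(\rchi_V|\rchi_{W_i})$, and with that reading the operator would fail to be idempotent whenever multiplicity and degree differ, so your normalization silently repairs a notational clash in the statement as printed. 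Second, a trivial slip: in your trace computation the display $\lambda_j\dim U_j=\frac{1}{g}\sum_t\overline{\rchi_i(t)}\,\rchi_{U_j}(t)$ omits the factor $n_i$ carried by the definition of $p_i$; the correct identity is $\lambda_j\dim U_j=n_i\,(\rchi_{U_j}\,|\,\rchi_i)$, which is what your subsequent formula for $p_i|_{U_j}$ actually uses, so nothing downstream is affected.
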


\begin{remark}
In practice, however, applying the projection formula \eqref{general projection formula} in computations is rather involved. Later, we will employ group invariants to devise an easier method, see Proposition {\rm \ref{key basis prop}{\rm }}.
\end{remark}



\subsection{Finite subgroups of $SO(3)$ and $SU(2)$}

The following is well-known.

\begin{theorem}\cite[p.~20] 
{GaborToth2002}
Up to conjugation, the only finite subgroups of $SO(3)$ are the cyclic groups $C_d$, the dihedral groups $D_d$, the tetrahedral group $A_4$, the octahedral group $S_4$, and the icosahedral group $A_5$.
\end{theorem}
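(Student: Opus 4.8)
The plan is to run the classical \emph{pole-counting} argument and then upgrade the resulting numerical data to a classification up to conjugacy. Let $G\subset SO(3)$ be finite with $N=|G|\geq 2$. Every non-identity element of $SO(3)$ is a rotation about a unique axis, hence fixes exactly two antipodal points of the unit sphere $S^2$; I call these the \emph{poles} of the element. Let $P$ be the set of all poles arising from non-identity elements of $G$. Since $G$ permutes axes, it acts on $P$, and for $p\in P$ the stabilizer $G_p$ is a finite group of rotations about a fixed axis, hence cyclic of order $r_p\geq 2$. The first step is the double count of the set of pairs $\{(g,p): g\neq e,\ g\cdot p=p\}$. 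Counting by $g$ gives $2(N-1)$ (two poles per non-identity element); counting by $p$ gives $\sum_{p\in P}(r_p-1)$.

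Next I organize the poles into $G$-orbits $O_1,\dots,O_k$. Within a single orbit the stabilizer orders agree (conjugate stabilizers), say $r_i$, and the orbit--stabilizer theorem gives $|O_i|=N/r_i$. Substituting yields
\begin{equation*}
2(N-1)=\sum_{i=1}^{k}\frac{N}{r_i}(r_i-1),
\qquad\text{hence}\qquad
2-\frac{2}{N}=\sum_{i=1}^{k}\Bigl(1-\frac{1}{r_i}\Bigr).
\end{equation*}
Because each $r_i\geq 2$ forces $1-\tfrac1{r_i}\geq\tfrac12$ while the left-hand side is $<2$ and (for $N\geq 2$) is $\geq 1$, I can pin down $k\in\{2,3\}$. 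The case $k=2$ forces $r_1=r_2=N$, i.e. two fixed poles, and $G$ is the cyclic group $C_N$. For $k=3$, after ordering $r_1\leq r_2\leq r_3$ the equation $\tfrac1{r_1}+\tfrac1{r_2}+\tfrac1{r_3}=1+\tfrac{2}{N}$ is a short Diophantine analysis: one shows $r_1=2$, and then $(r_1,r_2,r_3)$ must be one of $(2,2,n)$ with $N=2n$, or $(2,3,3)$ with $N=12$, or $(2,3,4)$ with $N=24$, or $(2,3,5)$ with $N=60$. These correspond respectively to the candidate groups $D_n$, $A_4$, $S_4$, $A_5$.

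The remaining, and most substantive, step is \textbf{existence and uniqueness up to conjugation}: the numerics above only constrain $N$ and the orbit data, so I must show each case is realized by exactly one conjugacy class in $SO(3)$. The cyclic and dihedral cases are handled directly by placing the common axis (resp.\ the axis of the size-$n$ orbit together with the $2$-fold axes) into standard position. For the three exceptional cases I argue geometrically: the orbit of poles of smallest stabilizer order plays the role of the vertex set of a regular polyhedron (tetrahedron, octahedron/cube, icosahedron/dodecahedron), the other two orbits being edge-midpoints and face-centers; one checks the configuration is rigid, so any two realizations are carried to one another by an element of $SO(3)$, and the group is identified with the rotation group of the corresponding Platonic solid, namely $A_4$, $S_4$, $A_5$. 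I expect this geometric rigidity argument — verifying that the three pole orbits force the full symmetry of a Platonic solid and that the realization is unique up to rotation — to be the main obstacle, the pole-counting itself being essentially formal. Finally, since the stated theorem concerns $SO(3)$ only, no separate argument for $SU(2)$ is needed here; the $SU(2)$ subgroups arise as preimages under the double cover $SU(2)\to SO(3)$ and are recorded for later use.
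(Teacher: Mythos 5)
The paper does not prove this statement at all: it is quoted directly from T\'oth's book \cite[p.~20]{GaborToth2002}, and the pole-counting argument you give is precisely the classical proof appearing in that cited reference (and in most standard treatments). Your proposal is correct, including your identification of the genuinely substantive step — upgrading the Diophantine orbit data $(2,2,n)$, $(2,3,3)$, $(2,3,4)$, $(2,3,5)$ to existence and uniqueness up to conjugacy via the rigidity of the Platonic pole configurations — so there is nothing to add.
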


Note that $\pi:SU(2)\rightarrow SO(3)\cong SU(2)/\{\pm \Id\}$ is a double cover. Given a finite subgroup $G$ of $SO(3)$, the inverse image $G^\ast$ of $G$ under $\pi$ is called the \emph{binary group associated to $G$}.

\begin{theorem}\cite[p.~36] 
{GaborToth2002}
Any finite subgroup of $SU(2)$ is either cyclic $C_d$ {\rm (}for all $d\in \mathbb{Z}_{\geq 0}${\rm )}, or conjugate to one of the binary subgroups $D_d^\ast,~A_4^\ast,~S_4^\ast$, or $A_5^\ast$.
\end{theorem}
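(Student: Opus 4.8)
The plan is to transport the problem to $SO(3)$ through the double cover $\pi\colon SU(2)\to SO(3)$, invoke the classification of finite subgroups of $SO(3)$ just stated, and then lift the answer back across the two-element kernel $\ker\pi=\{\pm\Id\}$. The one algebraic fact I would isolate at the outset is that $-\Id$ is the \emph{only} element of order $2$ in $SU(2)$: if $g\in SU(2)$ satisfies $g^2=\Id$, its eigenvalues lie in $\{\pm 1\}$ and multiply to $\det g=1$, forcing $g=\Id$ or $g=-\Id$. This small observation is what controls the whole lifting step.

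Given a finite subgroup $G\subset SU(2)$, its image $\pi(G)$ is a finite subgroup of $SO(3)$, hence by the preceding theorem is conjugate to one of $C_d,D_d,A_4,S_4,A_5$. Since any conjugation in $SO(3)$ lifts to a conjugation in $SU(2)$ (choose any $\pi$-preimage of the conjugating rotation) and $-\Id$ is central, after conjugating I may assume $\pi(G)$ is one of these standard subgroups. I would then split into two cases according to whether $-\Id\in G$.

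If $-\Id\in G$, then $\ker\pi\subset G$, so $G=\pi^{-1}(\pi(G))$ is exactly the full preimage, i.e. the binary group attached to $\pi(G)$. When $\pi(G)=C_d$ this preimage is \emph{cyclic} of order $2d$: the lift $\diag(e^{i\pi/d},e^{-i\pi/d})$ of a generating rotation already has order $2d$ and hence generates the entire $2d$-element preimage, so no splitting $C_d\times C_2$ occurs in the even case. In the remaining cases $G$ is, by definition, one of $D_d^\ast,A_4^\ast,S_4^\ast,A_5^\ast$. If instead $-\Id\notin G$, then $\pi|_G$ is injective and $G\cong\pi(G)$; moreover $G$ contains no involution, the only one available in $SU(2)$ being $-\Id$, so by Cauchy's theorem $|G|$ is odd. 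Among the finite subgroups of $SO(3)$ only the cyclic groups $C_d$ with $d$ odd have odd order, whence $G\cong C_d$ is cyclic. Combining the two cases gives the asserted list.

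I expect the genuine content to sit in the case $-\Id\notin G$: the uniqueness-of-involution fact together with Cauchy's theorem is precisely what prevents a non-cyclic subgroup from appearing as a faithful $SO(3)$-type copy inside $SU(2)$. The companion subtlety, that the preimage of $C_d$ is the cyclic $C_{2d}$ rather than $C_d\times C_2$, is dispatched by the same uniqueness statement together with the explicit generator above, while the lifting of conjugacy and the identity $G=\pi^{-1}(\pi(G))$ are routine once one notes $\ker\pi\subset G$.
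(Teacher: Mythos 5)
The paper does not prove this theorem at all: it is quoted as background from T\'oth's book, so there is no internal argument to compare against. Your proof is correct and is exactly the standard one: reduce through the double cover $\pi\colon SU(2)\to SO(3)$, use that $-\Id$ is the unique involution in $SU(2)$, and split on whether $-\Id\in G$ --- if yes, $G=\pi^{-1}(\pi(G))$ is a binary group (with the preimage of $C_d$ correctly identified as cyclic $C_{2d}$ via the order-$2d$ generator $\diag(e^{i\pi/d},e^{-i\pi/d})$, not $C_d\times C_2$); if no, Cauchy's theorem forces $|G|$ odd, so $\pi(G)$, hence $G$, is cyclic. No gaps.
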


Since these binary polyhedral groups $G^\ast$ are also subgroups of $SL_2(\mathbb{C})$, one obtains representations of $G^\ast$ on the space of binary forms $V_n$. Indeed, given $A\in G^\ast$ with eigenvalues $\lambda,\lambda^{-1}$, it is directly checked that
\begin{equation}\label{formula to compute char on Vn}
\rchi_{V_n}(A)=(\lambda^{n+1}-\lambda^{-n-1})/(\lambda-\lambda^{-1}).
\end{equation}

Let $v$ be an irreducible character of $G^\ast$. Denote by $m_{h,v}=(\rchi_{V_n}|v)$ the multiplicity of $v$ in the representation $G^\ast$ in the space $V_n$ of binary forms. Springer \cite{Springer} gave an explicit formula for the Poincar\'e series
\begin{small}$$
P_v(T) =\sum_{h=0}^\infty m_{h,v}\, T^h.
$$\end{small}

\begin{proposition}\cite[p.~105, 4.2. p.~107, 4.4]
{Springer}\label{poincareEven}
Let $G^\ast$ be one of $D_{2n}^\ast,~A_4^\ast,~S_4^\ast$. Then any irreducible representation of degree $1$ of $G^\ast$ only occurs in $V_{2m}$ due to that its Poincar\'e series is an even function.

\end{proposition}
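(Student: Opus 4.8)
The plan is to extract the evenness of $P_v(T)$ from a single structural feature shared by all three families: the central involution $-\Id \in SU(2)$. Since $G^\ast = \pi^{-1}(G)$ contains the kernel $\{\pm\Id\}$ of the double cover $\pi\colon SU(2)\to SO(3)$, we always have $-\Id \in G^\ast$. I would combine two facts about this element. First, it acts on $V_n$ by the scalar $(-1)^n$, directly from the action law, since $((-\Id)\cdot f)(u,v)=f(-u,-v)=(-1)^n f(u,v)$ for $f\in V_n$. Second, every degree-$1$ irreducible character $v$ of $G^\ast$ satisfies $v(-\Id)=1$. Granting these, the conclusion $m_{n,v}=0$ for odd $n$ — equivalently, the evenness of $P_v(T)$ — follows from a one-line averaging argument, so no explicit Molien/Poincaré computation is needed.

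For the averaging step I would start from the multiplicity formula $m_{h,v}=(\rchi_{V_n}\mid v)=\frac{1}{|G^\ast|}\sum_{g\in G^\ast}\rchi_{V_n}(g)\overline{v(g)}$ and reindex by the bijection $g\mapsto(-\Id)g$ of $G^\ast$. The first fact gives $\rchi_{V_n}((-\Id)g)=(-1)^n\rchi_{V_n}(g)$, while the second gives $\overline{v((-\Id)g)}=\overline{v(-\Id)}\,\overline{v(g)}=\overline{v(g)}$. Substituting yields
\begin{equation*}
m_{h,v}=\frac{1}{|G^\ast|}\sum_{g\in G^\ast}\rchi_{V_n}((-\Id)g)\overline{v((-\Id)g)}=(-1)^n\,m_{h,v},
\end{equation*}
so $m_{h,v}=0$ whenever $n$ is odd. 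This is exactly the assertion that $v$ occurs only in the $V_{2m}$ and that its Poincar\'e series is an even function of $T$.

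The real content — and the step I expect to demand the most care — is the claim $v(-\Id)=1$ for every degree-$1$ irreducible $v$. As such representations factor through the abelianization, this is equivalent to $-\Id\in[G^\ast,G^\ast]$, which I would verify family by family. For $A_4^\ast\cong SL(2,3)$ the commutator subgroup is the quaternion group $Q_8$, which contains $-\Id$; for the binary octahedral $S_4^\ast$ the commutator subgroup is $A_4^\ast\cong SL(2,3)$, which again contains $-\Id$. The subtle case is the binary dihedral (dicyclic) group $D_d^\ast=\langle a,b\mid a^{2d}=1,\ b^2=a^{d},\ bab^{-1}=a^{-1}\rangle$, where the central involution is $-\Id=a^{d}$ and $[G^\ast,G^\ast]=\langle a^2\rangle$: here $-\Id\in\langle a^2\rangle$ precisely when $d$ is even. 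This is exactly the hypothesis $D_{2n}^\ast$ in the statement, since then $-\Id=a^{2n}=(a^2)^{n}\in\langle a^2\rangle$, whereas for odd dihedral index $a^{\mathrm{odd}}$ escapes $\langle a^2\rangle$ and a spinorial degree-$1$ character with $v(-\Id)=-1$ would appear, breaking evenness. Thus the parity restriction built into the notation is the crux on which the whole argument turns, and pinning it down correctly for the dihedral family is where I would be most careful.
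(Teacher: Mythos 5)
Your proof is correct, but it takes a genuinely different route from the paper: the paper offers no argument of its own, simply citing Springer's explicit closed formulas for the Poincar\'e series $P_v(T)$ of each degree-$1$ character (rational functions whose numerators and denominators visibly contain only even powers of $T$), and reading evenness off from those formulas. You instead give a self-contained structural proof: the central involution $-\Id\in G^\ast$ acts on $V_n$ by the scalar $(-1)^n$; every degree-$1$ character is trivial on $-\Id$ because $-\Id$ lies in the derived subgroup $[G^\ast,G^\ast]$ for each of the three families; and the reindexing $g\mapsto(-\Id)g$ in the multiplicity formula then forces $m_{n,v}=(-1)^n m_{n,v}$, hence vanishing in odd degrees. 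Your group-theoretic verifications are accurate ($[A_4^\ast,A_4^\ast]\cong Q_8$, $[S_4^\ast,S_4^\ast]\cong A_4^\ast$, and for the dicyclic group $[D_d^\ast,D_d^\ast]=\langle a^2\rangle$ contains $a^d=-\Id$ exactly when $d$ is even), and you correctly isolate the dihedral parity condition as the crux: the paper's own character table for $D_3^\ast$ confirms your warning, since $\psi_2$ and $\psi_4$ there satisfy $\psi_i(-\Id)=-1$ and indeed occur in odd degree (the paper computes $\rchi_{V_7}=\psi_2+\psi_4+3\psi_6$), so evenness genuinely fails for odd dicyclic index. As for what each approach buys: yours is elementary, explains the mechanism behind evenness, and generalizes at once to any finite subgroup of $SU(2)$ whose derived subgroup contains $-\Id$; Springer's formulas, which the paper leans on, carry strictly more information (all the multiplicities $m_{h,v}$, not just their parity pattern), though only the evenness statement is used in the paper.
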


\subsection{Invariants}

Let $G$ be a finite subgroup of $SU(2)$. A non-zero homogeneous binary polynomial $f\in V_n$ is called an \emph{invariant} of $G$ if $\Span\{f\}$ is stable under the action of $G$. Thus, $\Span\{f\}$ is an irreducible representation with degree $1$ of $G$; denote its character by $\rchi_f$ for simplify. Furthermore, we call $f$ an \emph{absolute invariant} if 
$\rchi_f\equiv 1$. 



For later reference, we use $x$ and $y$ as the variables in binary forms. The invariants of finite subgroup of $SU(2)$ are classified as polynomials of fundamental invariants, which are closely related to the geometry of regular polyhedra depicted in Figure~1 for easier visulization.
\begin{figure}[htbp!]
\centering
\includegraphics[width=0.65\textwidth]{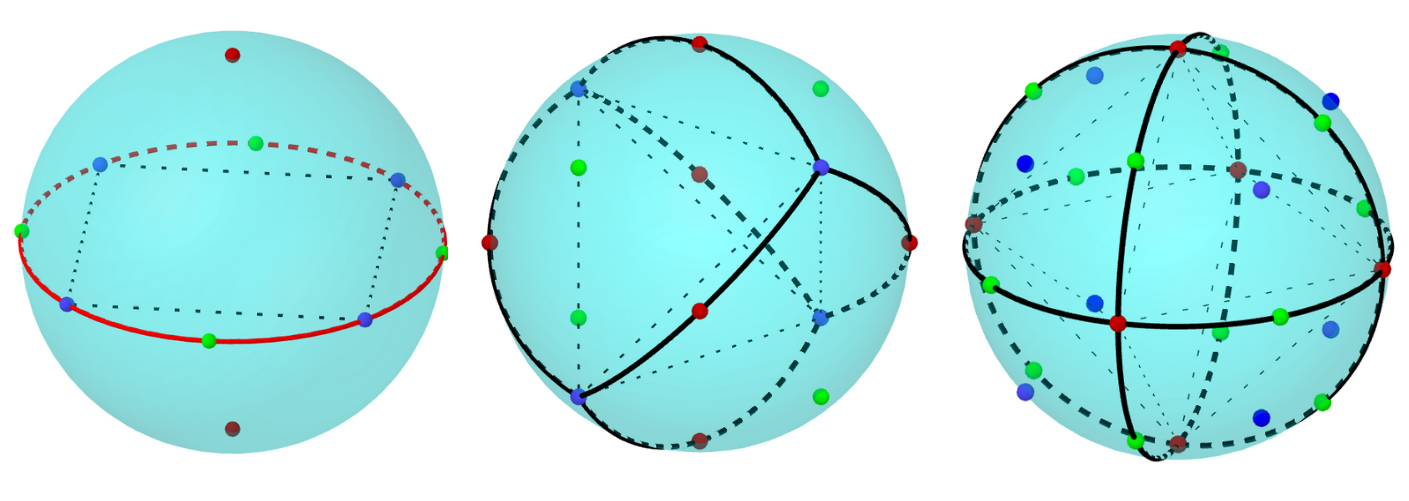}
\caption{Regular Polyhedra Inscribed in the Sphere}\label{DihedronFig}
\end{figure}

We identify the complex plane ${\mathbb C}$ as the equatorial plane of the unit sphere $S^2$ centered at the origin. Stereographic projection through the north pole of $S^2$ identifies it with the extended complex plane, through which $w:=x/y$ corresponds to a point on $S^2$.

For the binary cyclic group $C_l^*$, $x$ and $y$ are fundamental invariants, which imply that any monomial $x^iy^{n-i}$ 
is an invariant.

The fundamental invariants of the binary dihedral group $D_l^*$ are 
$$\medmath{\alpha_d\triangleq (x^d-y^d)/2 
,\quad\quad\quad ~~~~~~\beta_\triangleq (x^d+y^ d)/2
,\quad\quad\quad~~~~~~\gamma\triangleq xy.}$$
Their zeros (by setting $\omega:=x/y$) can be geometrically interpreted through the regular polygon structure shown in the upper-left sphere above, 
‌where the blue vertices‌ correspond to the zeros of $\alpha_d$, the green  edge midpoints‌ mark the zeros of $\beta_d$, while the red dots‌ at the centroids of the hemispherical faces represent the vanishing points of $\gamma$. 

For the binary tetrahedral group $A_4^*$, the fundamental invariants are 
\[\medmath{\Phi\triangleq x^4-2\sqrt{3}ix^2y^2+y^4, \quad \Psi\triangleq x^4+2\sqrt{3}ix^2y^2+y^4, \quad\Omega\triangleq xy(x^4-y^4).}\]
To see their zeros, we consider the tetrahedron shown in the middle sphere. 
The ‌blue vertices‌ correspond to the zeros of $\Phi$, the red edge midpoints‌ mark the zeros of $\Omega$, while the green dots‌ at the centroids of the four faces represent the vanishing points of $\Psi$.

For the binary octahedral group $S_4^\ast$,  
$\{\Omega,~ \Phi\Psi, ~ {\Phi^3+\Psi^3}\}$ 
constitutes its set of fundamental invariants. Their zeros are geometrically encoded by the regular octahedron in the upper-right sphere, where $\Omega$ vanishes at the six vertices (marked in red), $\Phi\Psi$ vanishes at the eight face centroids (in blue), while $\Phi^3+\Psi^3$ vanishes at the twelve edge midpoints (in green).

The fundamental invariants and their algebraic relations are summarized in Table 1. We do not list the invariants of $A_5^*$ as we do not need them in the sequel. See \cite[p.~42]{GaborToth2002} instead. 
\vskip -0.2cm
{\scriptsize\begin{table}[hbtp]
\begin{tabular}{|c|c|c|c|c|c|}
\hline
Platonic          & $G^\ast$ & $\iota_0$ & $\iota_1$ & $\iota_2$ & Relation\\ \hline
Dihedron & $D_d^\ast$ & $\alpha_d$ & $\beta_d$ & $\gamma$ & $\alpha_d^2-\beta_d^2+\gamma^d=0$     \\ \hline
Tetrahedron & $A_4^\ast $ & $\Phi$ & $\Omega$ & $\Psi$ & $\Phi^3+12\sqrt{3}i\Omega^2-\Psi^3=0$ \\ \hline
Octahedron & $S_4^\ast$ & $\Omega$ & $\frac{\Phi^3+\Psi^3}{2}$ & $\Phi\Psi$ & $108\Omega^4+(\frac{\Phi^3+\Psi^3}{2})^2-(\Phi\Psi)^3=0$ \\ \hline
\end{tabular}
\vspace{0.15cm}
\caption{Fundamental Invariants}\label{invariants and relation}
\end{table}}
\vskip -0.5cm
These polyhedral subgroups of $SU(2)$ are closely tied to rational branched Galois coverings onto ${\mathbb P}^1$. Indeed, 
let $M$ be a compact Riemann surface. Consider the branched covering $\varphi:M\rightarrow \mathbb{P}^1$ and its group of covering transformations of $G=\{\sigma\in \Aut(M)~|~\varphi\circ\sigma=\varphi\}$. We call $\varphi$ a branched \emph{Galois covering} if $|G|=\deg \varphi$.

When $M=\mathbb{P}^1$, which is to be enforced henceforth, the branched Galois coverings,  up to M\"obius transformation, were classified by Klein \cite{Klein1956} as given in Table \ref{Rational Galois Coverings} below. We will utilize the following simple lemma in section~\ref{sec-D2} through section~\ref{sec-S4}. 
\begin{lemma}{\label{linear system of D2}}
Let $G$ be any group from Table $2$ {\rm (}with $C_n$ excluded{\rm )} and $\varphi$ its associated Galois covering. Let $\mathrm{Nu}(\varphi)$ and $\mathrm{De}(\varphi)$ be the numerator and denominator of $\varphi$, respectively, and $\iota_1$ be the corresponding fundamental 
invariant listed in Table~$1$. 
Then, the zeros of the linear system $s_1\mathrm{Nu}(\varphi)+s_2\mathrm{De}(\varphi)$ {\rm (}$[s_1,s_2]\neq 0${\rm )} are $(1)$ zeros of $\mathrm{Nu}(\varphi)$ if $s_2=0$, $(2)$ zeros of $\mathrm{De}(\varphi)$ if $s_1=0$, $(3)$ zeros of $\iota_1^2$ if $s_1=-s_2$, or $(4)$ a principal orbit of $G$ if $s_1s_2(s_1+s_2)\neq 0$. \end{lemma}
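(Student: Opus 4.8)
The plan is to reduce everything to the explicit shape of the Galois covering $\varphi$ supplied by Klein's classification (Table~2) together with the algebraic relations of Table~1. For each admissible $G$ the covering $\varphi:\mathbb{P}^1\to\mathbb{P}^1$ is the quotient map by $\bar G:=G/\{\pm\Id\}\subset SO(3)$, of degree $N:=|\bar G|$, and after a Möbius normalization of the target its three branch values are $0,1,\infty$. The key observation I would record first is that $\mathrm{Nu}(\varphi)$ and $\mathrm{De}(\varphi)$ are exactly suitable powers of the two fundamental invariants $\iota_0$ and $\iota_2$ (those cutting out the vertex and face orbits), chosen to share the common degree $N$: for instance $\iota_0^2=\alpha_d^2$ and $\iota_2^d=\gamma^d$ in the dihedral case, $\iota_0^3=\Phi^3$ and $\iota_2^3=\Psi^3$ in the tetrahedral case, and $\iota_2^3=(\Phi\Psi)^3$ and $\iota_0^4=\Omega^4$ in the octahedral case (the icosahedral case $A_5^\ast$ being entirely analogous via its classical relation). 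Since $\iota_0$ and $\iota_2$ vanish on disjoint orbits, $\mathrm{Nu}(\varphi)$ and $\mathrm{De}(\varphi)$ have no common zero.

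Cases (1) and (2) are then immediate, for $s_2=0$ makes $s_1\mathrm{Nu}(\varphi)+s_2\mathrm{De}(\varphi)$ a nonzero multiple of $\mathrm{Nu}(\varphi)$, and symmetrically for $s_1=0$. For case (3) I would invoke the relation in the last column of Table~1: in each instance it rearranges, under the normalization of Table~2, to $\mathrm{Nu}(\varphi)-\mathrm{De}(\varphi)=c\,\iota_1^2$ with $c\neq0$ (for the tetrahedron $\Phi^3-\Psi^3=-12\sqrt3\,i\,\Omega^2$; for the octahedron $(\Phi\Psi)^3-108\,\Omega^4=(\tfrac{\Phi^3+\Psi^3}{2})^2$; and the dihedral relation $\alpha_d^2-\beta_d^2+\gamma^d=0$ is used likewise). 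Hence when $s_1=-s_2$ the linear combination is proportional to $\iota_1^2$, and its zeros are precisely those of $\iota_1^2$.

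The substantive case is (4). Here I set $\lambda:=-s_2/s_1$, which under $s_1s_2(s_1+s_2)\neq0$ lies in $\mathbb{C}\setminus\{0,1\}$ (and $\lambda\neq\infty$ since $s_1\neq0$). Using $\mathrm{Nu}(\varphi)=\varphi\cdot\mathrm{De}(\varphi)$, the form $s_1\mathrm{Nu}(\varphi)+s_2\mathrm{De}(\varphi)$ equals $s_1\mathrm{De}(\varphi)\,(\varphi-\lambda)$, and because $\mathrm{Nu}(\varphi),\mathrm{De}(\varphi)$ have no common zero, its zero set is exactly the fiber $\varphi^{-1}(\lambda)$. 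Since $\lambda$ avoids the three branch values $0,1,\infty$, this is a regular fiber: it consists of $N=|\bar G|$ distinct points permuted simply transitively by $\bar G$, i.e.\ a single principal orbit of $G$. This is consistent with the count, as $s_1\mathrm{Nu}(\varphi)+s_2\mathrm{De}(\varphi)$ is a binary form of degree $N$ with $N$ simple zeros.

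The main obstacle is the classical input behind case (4): that the only branch values of $\varphi$ are $0,1,\infty$ and that they correspond exactly to the three exceptional (vertex, edge, face) orbits cut out by $\iota_0,\iota_1,\iota_2$. This is Klein's description of the polyhedral coverings, and the cleanest way to secure it is to check, group by group, that the degrees match the orbit sizes: each exceptional orbit has size $N/e$ with ramification index $e$ the order of the corresponding cyclic stabilizer, and the Riemann--Hurwitz identity $2N-2=\sum_i(e_i-1)(N/e_i)$ forces there to be exactly three such orbits. This also explains the hypothesis of the lemma, since for $C_n$ the covering $w\mapsto w^n$ has only the two branch values $0,\infty$, no edge invariant $\iota_1$, and hence no analogue of case (3); this is precisely why $C_n$ is excluded. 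Once this classical branching picture is in place, the four cases follow as above.
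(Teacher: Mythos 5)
Your proof is correct and follows essentially the same route as the paper's: the paper's own (per-group) proof is the one-line observation that the zeros of $s_1\mathrm{Nu}(\varphi)+s_2\mathrm{De}(\varphi)$ coincide with the fiber $\varphi^{-1}(-s_2/s_1)$, after which the known structure of the polyhedral Galois covering (exceptional fibers over $0$, $1$, $\infty$ cut out by the vertex, edge, and face invariants; all other fibers principal orbits) yields the four cases. You merely make explicit the supporting facts the paper leaves implicit, namely the Table~1 relations behind case (3) and the Riemann--Hurwitz argument that the branch values are exactly $0$, $1$, $\infty$.
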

\vskip -0.3cm
{\small\begin{table}[H]
\begin{tabular}{|c|c|c|c|c|c|}
\hline
$G$          & $C_n$ & $D_j$ & $A_4$ & $S_4$ & $A_5$\\ \hline
$\varphi([x,y])$ &   $\frac{x^n}{y^n}$    &  -$\frac{\alpha_j^2}{\gamma^j}$      & $\frac{\Phi^3}{\Psi^3}$      &  $\frac{(\Phi\Psi)^3}{108\Omega^4}$    & $\frac{\mathcal{H}^3}{1728\mathcal{I}^5}$  \\ \hline
\end{tabular}
\vspace{0.15cm}
\caption{Rational Branched Galois Coverings}\label{Rational Galois Coverings}
\vskip -0.25cm
\end{table}}
\begin{proposition}\label{key basis prop}
Let $G^*$ be a finite subgroup of $SU(2)$ with the irreducible representation on $V_1$. Let $f\in V_n$ be an invariant of $G^*$ with character $\rchi_f$, and $\Span\{g_1,g_2,\ldots,g_k\}\subset V_m$ be an irreducible representation of $G^\ast$ with character $\rchi^\prime$. Then we have the following conclusions.
\begin{enumerate}
\item[\rm{(1)}] $\Span\{\frac{\partial f}{\partial u},\frac{\partial f}{\partial v}\}\subset V_{n-1}$ is an irreducible representation of $G^\ast$ with character $\rchi_f\rchi_{V_1}$. In particular, $-\frac{\partial f}{\partial v}\otimes e_1+\frac{\partial f}{\partial u}\otimes e_2$ spans a $1$-dimensional invariant space with character $\chi_f$ in $V_n\otimes {\mathbb C}^2$. Likewise, $u\otimes e_1+v\otimes e_2$ spans a $1$-dimensional invariant subspace.

\item[\rm{(2)}] $\Span\{fg_1,fg_2,\ldots,fg_k\}\subset V_{n+m}$ is an irreducible representation of $G^\ast$ with character $\rchi_f\rchi^\prime$.
\end{enumerate}

\end{proposition}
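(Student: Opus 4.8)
The plan is to establish both parts by differentiating, respectively multiplying, the defining identity of the $SL_2$-action, reading off the resulting representation matrix, and then invoking the elementary fact that twisting an irreducible representation by a one-dimensional character again yields an irreducible representation.

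For part (1), I would first differentiate the relation $(A\cdot f)(u,v)=f(A^{-1}(u,v)^T)$. Writing $f_u=\partial f/\partial u$ and $f_v=\partial f/\partial v$ and applying the chain rule gives $(\partial_u(A\cdot f),\partial_v(A\cdot f))^T=(A^{-1})^T\,(A\cdot f_u,A\cdot f_v)^T$. Since $f$ is an invariant, $A\cdot f=\chi_f(A)f$, so the left-hand side equals $\chi_f(A)(f_u,f_v)^T$; solving yields the transformation law $(A\cdot f_u,A\cdot f_v)=\chi_f(A)\,(f_u,f_v)\,A$. Thus in the ordered basis $(f_u,f_v)$ the representing matrix of $A$ on $\Span\{f_u,f_v\}$ is $\chi_f(A)A$, whose trace is $\chi_f(A)\,\TR(A)=\chi_f(A)\chi_{V_1}(A)$ by \eqref{formula to compute char on Vn} with $n=1$; this is the asserted character $\chi_f\chi_{V_1}$. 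That $f_u,f_v$ are genuinely independent (so the span is two-dimensional) follows from Euler's relation $uf_u+vf_v=nf\neq0$ together with injectivity of the intertwiner $\mathbb C^2\to V_{n-1}$, $(a,b)\mapsto af_u+bf_v$; and irreducibility holds because $\chi_f(A)A$ is the twist by the one-dimensional character $\chi_f$ of the standard representation $A\mapsto A$, which is isomorphic to the (by hypothesis irreducible) module $V_1$.

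For the ``in particular'' assertions I would verify criterion \eqref{pqn} directly. Taking $(p,q)=(-f_v,f_u)=(f_u,f_v)\begin{psmallmatrix}0&1\\-1&0\end{psmallmatrix}$ and substituting the transformation law above, the desired identity $(A\cdot p,A\cdot q)=\chi_f(A)(p,q)(A^{-1})^T$ collapses to the single matrix identity $A=J(A^{-1})^TJ^{-1}$, which is the transpose of the relation $A^T=JA^{-1}J^{-1}$ recorded in the text; hence $\Span\{-f_v\otimes e_1+f_u\otimes e_2\}$ is a one-dimensional invariant subspace with character $\chi_f$. The claim for $u\otimes e_1+v\otimes e_2$ is immediate, since \eqref{V1 equi to natural action} is exactly \eqref{pqn} with trivial character. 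For part (2), I would use multiplicativity of the action, $A\cdot(fg_i)=(A\cdot f)(A\cdot g_i)=\chi_f(A)\,f\,(A\cdot g_i)$, and expand $A\cdot g_i$ in the basis $\{g_j\}$: the representing matrix on $\Span\{fg_1,\dots,fg_k\}$ is then $\chi_f(A)$ times that on $\Span\{g_1,\dots,g_k\}$, giving character $\chi_f\chi'$. As $\mathbb C[u,v]$ is an integral domain and $f\neq0$, multiplication by $f$ is injective, so the $fg_j$ remain independent and the module is the $\chi_f$-twist of the given irreducible one, hence irreducible.

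The only genuinely delicate point—and the step I expect to require the most care—is bookkeeping with the $SL_2$-conventions: the gradient transforms naturally by $A$ (up to the scalar $\chi_f$), whereas the criterion \eqref{pqn} is phrased through $(A^{-1})^T$. Reconciling the two forces the use of the symplectic identity $A^T=JA^{-1}J^{-1}$ (equivalently the canonical isomorphism $(V_1)^*\cong V_1$), and one must track the twisting character carefully so that the one-dimensional subspace in part (1) acquires character exactly $\chi_f$ rather than some product with $\chi_{V_1}^{\pm1}$.
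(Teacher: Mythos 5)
Your proof is correct, and it reaches the same conclusions as the paper by a more elementary, self-contained route. The paper's engine is the $SL_2$-equivariance of the first transvectant \eqref{transvectant}: writing $(f,u)_1=-\frac{1}{n}\,\partial f/\partial v$ and $(f,v)_1=\frac{1}{n}\,\partial f/\partial u$, equivariance together with \eqref{V1 equi to natural action} delivers in one line (equation \eqref{symp}) the transformation law already in the $(A^{-1})^T$-form required by \eqref{pqn}, so no conjugation by $J$ is ever needed; irreducibility is then checked by the character-norm computation $(\rchi_f\rchi_{V_1}|\rchi_f\rchi_{V_1})=1$, using $|\rchi_f|\equiv 1$. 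You instead re-derive that law by hand: the chain rule gives the covariance of the gradient with $A$ acting on the right, and the symplectic identity $A^T=JA^{-1}J^{-1}$ converts it to the \eqref{pqn}-form --- this is exactly the bookkeeping that the transvectant formalism packages away, and you correctly identified it as the one delicate step. Your irreducibility argument (twisting an irreducible representation by a one-dimensional character remains irreducible) replaces the paper's inner-product computation, and both are sound. Moreover, your use of Euler's relation plus Schur's lemma to establish that $\Span\{\partial f/\partial u,\partial f/\partial v\}$ is genuinely two-dimensional, and of the integral-domain property of $\mathbb{C}[u,v]$ in part (2) to keep $fg_1,\ldots,fg_k$ independent, makes explicit two points the paper leaves tacit. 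What the paper's route buys is brevity, since the transvectant machinery is already set up and reused throughout; what yours buys is independence from that machinery and a cleaner account of why the dimensions do not collapse.
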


\begin{proof}
(1). Note that the $1$st transvectant is $SL_2$-invariant; see \eqref{transvectant}. We have
\begin{small}$$
(A\cdot f,A\cdot g)_1=(\rchi_f(A)\,f,A\cdot g)_1=\rchi_f(A)\,(f,A\cdot g)_1.$$\end{small}
Thus, for any $A\in G^\ast$, by \eqref{V1 equi to natural action} we derive
\begin{small}\begin{equation}\label{symp}(A\cdot (f,u)_1,A\cdot (f,v)_1)=\chi_f(A)\, ((f,A\cdot u)_1,(f,A\cdot v)_1)=\chi_f(A)\, ((f,u)_1,(f,v)_1)\,(A^{-1})^T.\end{equation}\end{small}
Moreover, by \eqref{transvectant}, $(f,u)_1=-\frac{1}{n}\frac{\partial f}{\partial v},~(f,v)_1=\frac{1}{n}\frac{\partial f}{\partial u}$, so that the character of the engaged representation is $\rchi_f\rchi_{V_1}$. It is irreducible since
\begin{small}$$(\rchi_f\rchi_{V_1}|\rchi_f\rchi_{V_1})=\frac{1}{|G^\ast|}\sum\limits_{A\in G^\ast}|\rchi_f(A)\rchi_{V_1}(A)|^2=\frac{1}{|G^\ast|}\sum\limits_{A\in G^\ast}|\rchi_{V_1}(A)|^2=1.$$\end{small}
Here we also use the fact the character of a $1$-dimensional complex representation takes values in $S^1$. The second sentence is a consequence of \eqref{symp} and \eqref{pqn}, and the last follows from \eqref{V1 equi to natural action}.

The statement in (2) is due to that the action of $SL_2$ on the algebra of all binary forms is an algebra homomorphism; that the representation is irreducible is verified similarly as in (1). 
\end{proof}


\subsection{The Fano 3-fold $\bf {\bf \mathcal{V}_5}$}\label{muf3f}~

The Fano 3-fold $\bf \mathcal{V}_5$ has been studied by Mukai and Umemura in \cite{Mukai-Umemura1983} from the viewpoint of algebraic group actions. By considering the action of $PSL_2$ on ${\mathbb P}(V_6)$, one sees that $\bf \mathcal{V}_5$ is the closure of $PSL_2 \cdot uv(u^4-v^4)$ (for a realization of $\bf \mathcal{V}_5$ as a generic linear section of $G(2,5)$, see \cite[Sec 3.1]{Chi-Xie-Xu2024}). In the same paper, they obtained the following orbit decomposition structure on $\bf \mathcal {V}_5$. 

\begin{theorem}\label{OrbitDecomp}
{\small\begin{align*}
\aligned
{\bf \mathcal{V}_5} =\overline{PSL_2 \cdot uv(u^4-v^4)}
 =PSL_2 \cdot uv(u^4-v^4) \sqcup PSL_2 \cdot u^5v \sqcup PSL_2 \cdot u^6.
\endaligned
\end{align*}}
\end{theorem}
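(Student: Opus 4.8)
The plan is to realize $X := \overline{PSL_2\cdot uv(u^4-v^4)}$ as a disjoint union of $PSL_2$-orbits and to pin down the two boundary orbits by degeneration. First I would record that $f := uv(u^4-v^4) = u^5v - uv^5$ has six distinct roots $\{0,\infty,\pm 1,\pm i\}$ on $\mathbb P^1$, which are precisely the vertices of a regular octahedron inscribed in $S^2$ (the two poles together with the four equatorial points) under the stereographic identification $w = x/y$ already set up before Table~$1$. Consequently the $PSL_2$-stabilizer of $[f]$ is the finite (projective) octahedral group $S_4$, so the orbit $O_3 := PSL_2\cdot[f]$ has dimension $3$, and $X$ is an irreducible projective threefold that is a $PSL_2$-invariant union of orbits with $O_3$ open and dense, boundary $\partial X := X\setminus O_3$ closed and of dimension $\le 2$.

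Next I would identify the candidate boundary orbits together with their dimensions by computing stabilizers. The form $u^5v$ has a root of multiplicity $5$ at $w=0$ and a simple root at $w=\infty$, so, the two multiplicities being unequal, its stabilizer is exactly the diagonal torus and $O_2 := PSL_2\cdot[u^5v]$ has dimension $2$; the form $u^6$ has a single root of multiplicity $6$, hence is stabilized by the Borel subgroup of upper-triangular matrices, giving $O_1 := PSL_2\cdot[u^6]$ dimension $1$.

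The degeneration engine is the one-parameter subgroup $\lambda(t) = \diag(t,t^{-1})$, which by \eqref{AB} acts on $\mathbb P^1$ via $w\mapsto t^2 w$ and thus, as $t\to 0$, collapses every root except one lying at $\infty$ onto the point $0$. A direct computation gives $\lambda(t)\cdot f = t^{-4}u^5v - t^4 uv^5$, so $\lim_{t\to 0}[\lambda(t)\cdot f] = [u^5v]$ and $O_2\subset\partial X$; applying $\lambda(t)$ to a generic translate $g\cdot f$ (one with no root at $\infty$) instead sends all six roots to $0$ and yields $[u^6]$, so $O_1\subset\partial X$; and degenerating $[u^5v]$ by a conjugate one-parameter subgroup that drives its simple root into the multiple root shows $O_1\subset\overline{O_2}$. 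This produces the candidate stratification $X = O_3\sqcup O_2\sqcup O_1$.

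The crux is proving that $\partial X$ contains no further orbit. Here I would avoid heavy GIT machinery and instead exploit the polar (Cartan) decomposition $SL_2(\mathbb C) = SU(2)\cdot A^+\cdot SU(2)$, where $A^+ = \{\diag(s,s^{-1}): s\ge 1\}$. Any boundary point is a limit $\lim_n g_n\cdot f$ with $g_n$ unbounded; writing $g_n = k_{1,n}\,a_n\,k_{2,n}$ and passing to a subsequence so that $k_{1,n}\to k_1$ and $k_{2,n}\to k_2$ in the compact group $SU(2)$ while $a_n = \lambda(s_n)$ with $s_n\to\infty$, the limit becomes an $SU(2)$-translate of a diagonal one-parameter limit of the octahedral configuration $k_2\cdot f$. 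By the root-collision analysis above, any such limit is $[u^6]$ or $[u^5v]$ up to the $SU(2)$-action, so it lies in $O_1\cup O_2$. The technical point to handle carefully is the interchange of the two limits together with the borderline case in which a root of $k_{2,n}\cdot f$ itself tends to $0$ or $\infty$; the geometry of root collisions under the torus makes each case transparent, but this bookkeeping is where the genuine work lies. (Alternatively, one may note that $f$ is GIT-stable since its roots are distinct, whence $\partial X$ is contained in the non-stable locus of sextics having a root of multiplicity $\ge 3$, and then rule out the remaining configurations by the same one-parameter-subgroup analysis.) Combining these steps yields $X = O_3\sqcup O_2\sqcup O_1 = PSL_2\cdot uv(u^4-v^4)\sqcup PSL_2\cdot u^5v\sqcup PSL_2\cdot u^6$, as claimed.
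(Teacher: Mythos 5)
Your proposal addresses a statement that the paper itself never proves: Theorem~\ref{OrbitDecomp} is quoted verbatim from Mukai--Umemura \cite{Mukai-Umemura1983}, where the orbit structure of $\overline{PSL_2\cdot uv(u^4-v^4)}\subset\mathbb{P}(V_6)$ is obtained in the course of their algebro-geometric study of minimal rational threefolds. Your argument is therefore a genuinely different, self-contained route, and it is correct: the stabilizer computations (finite octahedral group for $uv(u^4-v^4)$, torus for $u^5v$, Borel for $u^6$) give the orbit dimensions $3,2,1$; the explicit torus degenerations put $O_2=PSL_2\cdot[u^5v]$ and $O_1=PSL_2\cdot[u^6]$ in the boundary; and the Cartan decomposition $SL_2(\mathbb{C})=SU(2)\,A^{+}\,SU(2)$ reduces the converse inclusion $\partial X\subset O_1\cup O_2$ to a torus-limit analysis. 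What your approach buys is independence from the machinery of \cite{Mukai-Umemura1983}; what the citation buys the authors is brevity and, more importantly, the identification of this orbit closure with the quintic del Pezzo threefold inside $G(2,5)$, which your argument does not (and for this statement need not) address. The one step you flag as ``where the genuine work lies'' does close cleanly, and it is worth saying how: since the root map $\mathbb{P}(V_6)\cong \Sym^6\mathbb{P}^1$ is a homeomorphism, convergence of sextics is equivalent to convergence of their root divisors; the roots of $k_{2,n}\cdot f$ converge to the six \emph{distinct} roots of $k_2\cdot f$, of which at most one lies at $0$ and at most one at $\infty$, so under $w\mapsto s_n^{2}w$ with $s_n\to\infty$ at least five of the rescaled roots tend to $\infty$, and after passing to a further subsequence in which the remaining root converges to some $c\in\mathbb{P}^1$, the limit form is $[(u-cv)v^5]$ or $[v^6]$, hence lies in $O_2\cup O_1$. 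Your parenthetical GIT alternative is correct as far as it goes (stability of $f$ forces $\partial X$ into the locus of sextics with a root of multiplicity $\geq 3$), but it does not by itself exclude, say, forms with a triple root and three simple roots, so some version of the torus-limit bookkeeping is unavoidable in either variant.
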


Meanwhile, through the invariants and covariants of the binary sextic (see \eqref{transvectant}), the above orbits have another $SL_2$-invariant characterization. 
\begin{proposition}\label{orbits defined by transvectant}
Given $f=\sum_{i=0}^{6}\sqrt{\tbinom{6}{i}}a_i u^{6-i}v^i$ defining $[f]\in \mathbb{P}(V_6)$, we have
\begin{enumerate}
\item[\rm{(1)}] $[f]$ lies in ${\bf {\mathcal V}_5}
$ if and only if the $4$-th transvectant $(f,f)_4=0$,

\item[\rm{(2)}] $[f]$ lies in the closed $2$-dim orbit $\overline{PSL_2 \cdot u^5v}$ if and only if the $4$-th and $6$-th transvectants $(f,f)_4$ and $(f,f)_6$ vanish, and

\item[\rm{(3)}] $[f]$ lies in the $1$-dim orbit $PSL_2 \cdot u^6$ if and only if the $2$nd transvectant $(f,f)_2=0$.
\end{enumerate}

\end{proposition}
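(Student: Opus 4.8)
The engine of the whole argument is the observation that each self-transvectant $(f,f)_p$ is an $SL_2$-covariant of $f$: because the $p$-th transvectant in \eqref{transvectant} is $SL_2$-equivariant, one has $(A\cdot f, A\cdot f)_p = A\cdot (f,f)_p$ for every $A \in SL_2$. Consequently each locus $\{[f]\in\mathbb{P}(V_6): (f,f)_p=0\}$ is a \emph{closed, $PSL_2$-invariant} subset of $\mathbb{P}(V_6)$. This reduces every ``if and only if'' in the statement to evaluating the relevant transvectant on the single orbit representatives $uv(u^4-v^4)$, $u^5v$, and $u^6$ supplied by Theorem \ref{OrbitDecomp}, and then propagating the conclusion across each orbit by invariance and across orbit closures by continuity.

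I would dispose of (3) first, since it is self-contained. Expanding \eqref{transvectant} with $m=n=6$ and $p=2$ gives $(f,f)_2 = c\,(f_{uu}f_{vv}-f_{uv}^2)$ for a nonzero constant $c$, i.e. a nonzero multiple of the Hessian covariant of $f$. The classical fact that the Hessian of a binary form of degree $n\ge 2$ vanishes identically precisely when the form is a perfect power $\ell^n$ of a linear form $\ell$ then yields $(f,f)_2=0 \iff f=\ell^6 \iff [f]\in PSL_2\cdot u^6$, the last equivalence being the orbit description in Theorem \ref{OrbitDecomp}. For (1), the necessity direction is a one-line verification: a direct computation gives $(f_0,f_0)_4=0$ for $f_0=uv(u^4-v^4)$, and covariance together with closedness propagates this to all of $\mathcal{V}_5=\overline{PSL_2\cdot f_0}$.

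For (2), once (1) is in hand the condition ``$(f,f)_4=0$ and $(f,f)_6=0$'' amounts to ``$[f]\in\mathcal{V}_5$ and $(f,f)_6=0$'', where $(f,f)_6\in V_0$ is the scalar quadratic invariant. I would compute $(f,f)_6$ on the three representatives, finding that it vanishes on $u^5v$ and $u^6$ but is nonzero on the octahedral sextic $f_0$. Since $\{(f,f)_6=0\}$ is closed and invariant and the orbit $PSL_2\cdot f_0$ is dense in $\mathcal{V}_5$, the set $\{(f,f)_6=0\}\cap\mathcal{V}_5$ is an invariant closed subset missing the open orbit; since it also contains the representatives $u^5v$ and $u^6$ by the computation, invariance forces it to equal exactly $PSL_2\cdot u^5v\sqcup PSL_2\cdot u^6=\overline{PSL_2\cdot u^5v}$, which is (2).

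The genuine obstacle is the sufficiency direction of (1): showing that the invariant closed locus $Z=\{[f]:(f,f)_4=0\}$, which by the above contains $\mathcal{V}_5$, is not strictly larger. Here $\Phi\colon V_6\to V_4$, $\Phi(f)=(f,f)_4$, is an $SL_2$-equivariant quadratic map whose five components are the quadrics cutting out $Z$. The plan is to compute the differential $d\Phi_{f_0}(g)=2(f_0,g)_4$ at the octahedral point and show it has rank $3$; this certifies that $Z$ is smooth of dimension $3=\dim\mathcal{V}_5$ along the dense orbit, so the irreducible component of $Z$ through $f_0$ coincides with $\mathcal{V}_5$, and invariance together with a dimension count rules out extra components. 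Alternatively one may invoke the classical Mukai--Umemura description \cite{Mukai-Umemura1983} of the degree-five del Pezzo threefold as the common zero of exactly these five quadrics. This rank and dimension bookkeeping, rather than any conceptual difficulty, is the part I expect to require the most care.
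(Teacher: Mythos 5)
The paper itself contains no proof of Proposition \ref{orbits defined by transvectant}: it is stated as known background accompanying the Mukai--Umemura orbit decomposition (Theorem \ref{OrbitDecomp}), so there is no argument of the authors' to compare yours against, and your proposal must stand on its own. Most of it does. The covariance of $(f,f)_p$ and closedness of its zero locus, the Hessian characterization of perfect powers giving (3), the evaluation $(f_0,f_0)_4=0$ at $f_0=uv(u^4-v^4)$ giving the necessity half of (1), and the reduction of (2) to (1) via the evaluation of $(f,f)_6$ on the three representatives (using \eqref{eQ}: it vanishes at $u^5v$ and $u^6$ but equals $-2a_1a_5\neq 0$ at $f_0$) are all correct and complete.

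The gap is exactly where you located it, but your repair does not close it. Writing $\Phi\colon V_6\to V_4$, $\Phi(f)=(f,f)_4$ and $Z=\{[f]:\Phi(f)=0\}$, a rank-$3$ computation of $d\Phi_{f_0}$ shows $Z$ is smooth of dimension $3$ along the open orbit, hence that $\mathcal{V}_5$ is the unique component of $Z$ \emph{meeting that orbit}; it says nothing about components disjoint from it, and ``invariance together with a dimension count'' is not an argument: invariance only says an extra component is a union of $PSL_2$-orbits, and no dimension count excludes, say, a two-dimensional invariant component sitting elsewhere in $\mathbb{P}^6$. The fix is to move the tangent-space computation from $f_0$ to $[u^6]$. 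Since $PSL_2$ is connected, every irreducible component of $Z$ is invariant, and every nonempty closed invariant subvariety of $\mathbb{P}(V_6)$ contains the \emph{unique} closed orbit $PSL_2\cdot[u^6]$ (the highest-weight orbit); so every component of $Z$ passes through $[u^6]$. There the computation is one line: all fourth-order partials of $u^6$ involving $v$ vanish, so $(u^6,g)_4$ is a nonzero multiple of $u^2\,\partial^4 g/\partial v^4$, whose image as $g$ ranges over $V_6$ is $u^2V_2$, of dimension $3$; hence $\ker d\Phi_{u^6}=\Span\{u^6,u^5v,u^4v^2,u^3v^3\}$ is $4$-dimensional, the affine cone over $Z$ has $4$-dimensional Zariski tangent space at $u^6$, and since it contains the $4$-dimensional cone over $\mathcal{V}_5$ it is regular there. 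A regular point lies on a unique irreducible component, so $Z$ is irreducible and equals $\mathcal{V}_5$, completing the sufficiency in (1) and with it the converse in (2). Your alternative route---citing the classical description of the quintic del Pezzo threefold as the common zero locus of these five quadrics---is legitimate provided the cited source states it in that form, but then the ``proof'' reduces to that citation, which is in effect what the paper already does by stating the proposition without proof.
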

For later purposes, we quote the following well-known calculation 
\begin{equation}\label{eQ}{\small
(f,f)_6=
2a_0a_6 - 2a_1a_5 + 2a_2a_4 - a_3^2,}
\end{equation}
and consider the $5$-quadric $Q_5$ defined by 
{\small\begin{equation}\label{5-quadric}
Q_5\triangleq\{[f]\in {\mathbb P}(V_6): (f,f)_6=0\}.
\end{equation}}

 In the above orbit decomposition, the open orbit $PSL_2 \cdot uv(u^4-v^4)$ of dimension $3$ is parameterized as {\rm (}under the basis $\{\sqrt{\binom{6}{i}}u^{6-i}v^i\}_{i=0}^6${\rm )}
{\small\begin{equation}\label{f}
\aligned
&f_1: PSL_2\mapsto {\mathbb P}(V_6), \quad
[\begin{pmatrix}
a & b\\
c & d
\end{pmatrix}] \mapsto [\begin{pmatrix}
a & b\\
c & d
\end{pmatrix} \cdot uv(u^4-v^4)]=[a_0:a_1:\cdots:a_6],\\
&a_0\triangleq -\sqrt {6}{d}^{5}c+\sqrt {6}d{c}^{5},\quad a_1\triangleq {d}^{4} \left( ad+5\,bc \right) -c^4(5\,ad+bc),\\
&a_2\triangleq -b{d}^{3} \left( ad+2\,bc \right) \sqrt {10}+a{c}^{3} \left( 2\,ad+bc
 \right) \sqrt {10}
, \\&a_3\triangleq {b}^{2}{d}^{2} \left( ad+bc \right) \sqrt {30}-{a}^{2}{c}^{2} \left( a
d+bc \right) \sqrt {30}
,\\
&a_4\triangleq -{b}^{3}d \left( 2\,ad+bc \right) \sqrt {10}+{a}^{3}c \left( ad+2\,bc
 \right) \sqrt {10}
,\\&a_5\triangleq b^4(5\,ad+bc)-{a}^{4} \left( ad+5\,bc \right) ,\quad
a_6\triangleq -\sqrt {6}{b}^{5}a+\sqrt {6}b{a}^{5}.
\endaligned
\end{equation}}

Furthermore, the isotropy group $G_0$ of the open orbit 
 is the projective binary octahedral group of order $24$, isomorphic to $S_4$, consisting of the following elements 
{\scriptsize
\begin{align}
\label{isotropy group}
\begin{split}
T_{1,k}:&=\begin{pmatrix}
  \xi_k & 0 \\
  0 & 1/\xi_k \\
\end{pmatrix},~~T_{2,k}:=\begin{pmatrix}
  0 & \xi_k \\
  -\frac{1}{\xi_k} & 0 \\
\end{pmatrix},~~T_{3,k}:=
1/\sqrt{2}\cdot\begin{pmatrix}
  1/\xi_k & -1/\xi_k \\
  \xi_k & \xi_k \\
\end{pmatrix},\quad\quad (\xi_k\triangleq e^{2k\pi\sqrt{-1}/8},~k
= 0,1,\ldots,3),\\
T_{4,k}:&=\frac{1}{\sqrt{2}}\cdot\begin{pmatrix}
  \sqrt{-1}/\xi_k & -1/\xi_k \\
  \xi_k & -\sqrt{-1}\xi_k \\
\end{pmatrix},~~ T_{5,k}:=\frac{1}{\sqrt{2}}\cdot\begin{pmatrix}
  -1/\xi_k & -1/\xi_k \\
  \xi_k & -\xi_k \\
\end{pmatrix},~~ T_{6,k}:=\frac{1}{\sqrt{2}}\cdot\begin{pmatrix}
  -\sqrt{-1}/\xi_k & -1/\xi_k \\
  \xi_k & \sqrt{-1}\xi_k\\
\end{pmatrix}.
\end{split}
\end{align}}
\!\!Note that $G_0$ is generated by $T_{1,1}$ and $T_{3,0}$ while $S_4$ is generated by the cycles $(1234)$ and $(2134)$, and they correspondingly share the same generator structures. 
Therefore, we obtain an isomorphism $J$ of $G_0$ with $S_4$ given in the table beside Figure 2. {\rm (}Note also that $G_0$ is invariant under taking matrix transpose.{\rm )}

\begin{proposition}\label{correspondence}
$S_4$ has $5$ conjugacy classes. We point out their geometric meanings with respect to vertices, edges, and faces of a regular octahedron {\rm (}see Figure $2$ for a pictorial description{\rm )}.
\begin{enumerate}
\item[\rm{(1)}] $\Id$, order $1$.
\item[\rm{(2)}] $[(12)]$, order $2$, consisting of $6$ half turns around the axes through the midpoint of opposite edges.
\item[\rm{(3)}] $[(12)(34)]$, order $2$, consisting of $3$ half turns around the axis through the opposite vertices.
\item[\rm{(4)}] $[(123)]$, order $3$, consisting of $8$ one-third turns around the axes through the centers of the opposite faces.
\item[\rm{(5)}] $[(1234)]$, order $4$, consisting of $6$ quarter turns around the axes through the opposite vertices.  
\end{enumerate} 
\end{proposition}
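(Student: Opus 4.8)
The plan is to leverage the isomorphism $J\colon G_0\to S_4$ already exhibited, together with the fact that $G_0$, being the projective image of the binary octahedral group $S_4^\ast\subset SU(2)$, acts on $\mathbb{P}^1\cong S^2$ by genuine rotations preserving the regular octahedron whose $6$ vertices are the zeros of $\Omega$, whose $8$ face centers are the zeros of $\Phi\Psi$, and whose $12$ edge midpoints are the zeros of $\Phi^3+\Psi^3$ (Table~\ref{invariants and relation}). The abstract count is standard: conjugacy in a symmetric group is governed by cycle type, and the five partitions of $4$ produce exactly the five classes $\Id$, $[(12)]$, $[(12)(34)]$, $[(123)]$, $[(1234)]$, of orders $1,2,2,3,4$ and cardinalities $1,6,3,8,6$ respectively (summing to $24=|S_4|$). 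This settles the first assertion.

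For the geometric interpretation, I would first note that every nontrivial rotation of $S^2$ fixes an antipodal pair of points, its axis, and that a rotation preserving the octahedron carries its vertex, edge, and face sets to themselves; hence its axis passes through a pair of opposite vertices, of opposite edge midpoints, or of opposite face centers. This yields $3$ vertex axes (from the $6$ vertices), $6$ edge axes (from the $12$ edges), and $4$ face axes (from the $8$ faces). A vertex axis supports rotations by $\pi/2,\pi,3\pi/2$, contributing $6$ quarter turns of order $4$ and $3$ half turns of order $2$; each face axis supports rotations by $2\pi/3,4\pi/3$, contributing $8$ third turns of order $3$; and each edge axis supports a single half turn of order $2$, contributing $6$ involutions. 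With the identity this accounts for $1+6+3+8+6=24$ elements.

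To match these geometric families with the conjugacy classes, transport the class structure of $S_4$ back through $J$, so that $G_0$ has five conjugacy classes with cardinalities $1,6,3,8,6$ and orders $1,2,2,3,4$. Each geometric family is closed under conjugation in $G_0$: conjugating a rotation about an axis $\ell$ by $g\in G_0$ gives a rotation through the same angle about $g(\ell)$, and $g$ sends vertices to vertices, edges to edges, and faces to faces. Hence each family is a union of conjugacy classes, and comparing cardinalities forces the identification. The $6$ quarter turns form the unique order-$4$ class $[(1234)]$, the $8$ face third turns form the unique order-$3$ class $[(123)]$, and among the order-$2$ classes, of sizes $3$ and $6$, the $3$ vertex half turns are $[(12)(34)]$ while the $6$ edge half turns are $[(12)]$. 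As a consistency check I would verify on one explicit element that $T_{1,1}$, the quarter turn $w\mapsto iw$ fixing the vertex pair $\{0,\infty\}$, corresponds to the $4$-cycle $(1234)$ under $J$, as recorded just before the statement.

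The only genuinely subtle point, which I would flag as the main obstacle, is separating the two families of involutions: their common order $2$ cannot distinguish transpositions from double transpositions, so the matching must be pinned down by class size, sending the $3$ vertex half turns to the $3$ double transpositions and the $6$ edge half turns to the $6$ transpositions. Everything else is bookkeeping of axes and angles, once the vertices, edge midpoints, and face centers of the octahedron have been identified with the zero loci of $\Omega$, $\Phi^3+\Psi^3$, and $\Phi\Psi$.
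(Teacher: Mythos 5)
Your proposal is correct and complete: the class count by cycle type, the enumeration of rotations by axis type (vertex, edge, face), and the matching via conjugation-invariance of each geometric family together with class sizes and element orders is exactly the standard argument, including the one subtle point of separating the size-$3$ class of vertex half turns from the size-$6$ class of edge half turns. The paper itself states this proposition without proof, treating it as well-known background on the octahedral rotation group, so your write-up simply supplies the standard justification the paper omits.
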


\begin{figure}[htbp]
\begin{minipage}{0.3\textwidth}
\includegraphics[width=\linewidth]{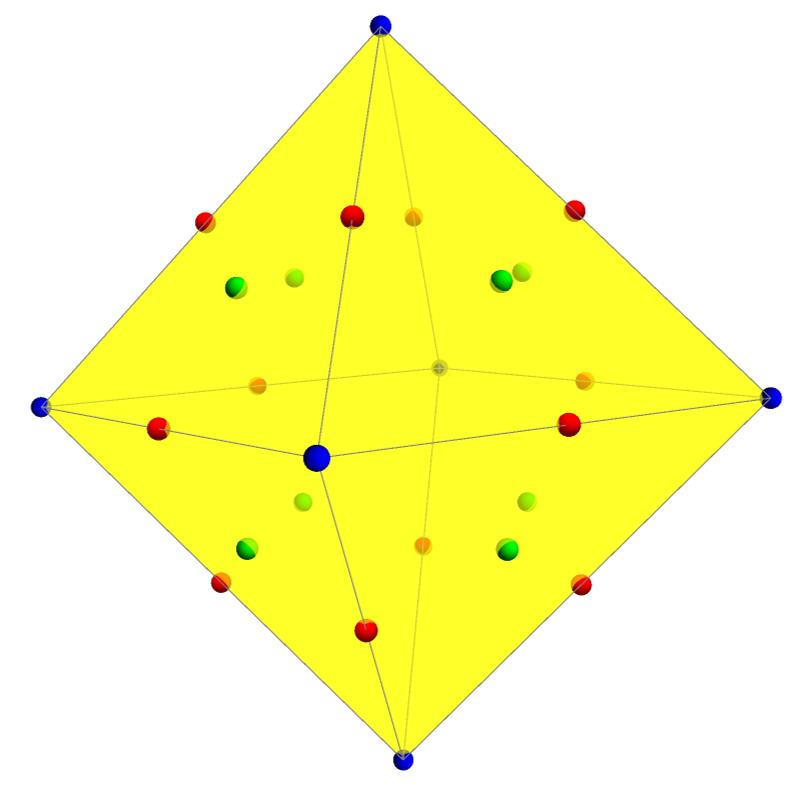}
\caption{Octahedron}\label{Octahedron}
\end{minipage}
\hfill
\begin{minipage}{0.6\textwidth}
\centering
\scriptsize
\begin{tabular}{|l|l|l|l|}
\hline
$g\in G_0$ & $J(g)$ & $g\in G_0$ & $J(g)$ \\ \hline
$T_{1,0}=\Id$  &  $\Id$   & $T_{1,1}$  & $(1234)$    \\ \hline
$T_{1,2}=T_{1,1}^2 $ &  $(13)(24)$   & $T_{1,3}=T_{1,1}^3$  &   $(1432)$    \\ \hline
$T_{2,0}=-T_{3,0}^2$  &  $(23)(14)$   & $T_{2,1}=-T_{1,1}T_{3,0}^2$  & $(24)$    \\ \hline
$T_{2,2}=-T_{1,1}^2T_{3,0}^2$  &  $(12)(34)$   & $T_{2,3}=-T_{1,1}^3T_{3,0}^2$  &   $(13)$    \\ \hline
$T_{3,0}$  &  $(2134)$   & $T_{3,1}=-T_{1,1}^3T_{3,0}$  &   $(124)$    \\ \hline
$T_{3,2}=-T_{1,1}^2T_{3,0}$  &  $(23)$   & $T_{3,3}=-T_{1,1}T_{3,0}$  &   $(143)$    \\ \hline
$T_{4,0}=T_{1,1}T_{3,0}T_{1,1}$  &  $(12)$   & $T_{4,1}=T_{3,0}T_{1,1}$  &   $(243)$    \\ \hline
$T_{4,2}=-T_{1,1}^3T_{3,0}T_{1,1}$  &  $(1423)$   & $T_{4,3}=-T_{1,1}^2T_{3,0}T_{1,1}$  &   $(134)$    \\ \hline
$T_{5,0}=T_{3,0}^3$  &  $(2431)$   & $T_{5,1}=-T_{1,1}^3T_{3,0}^3$  &   $(234)$    \\ \hline
$T_{5,2}=-T_{1,1}^2T_{3,0}^3$  &  $(14)$   & $T_{5,3}=-T_{1,1}T_{3,0}^3$  &   $(132)$    \\ \hline
$T_{6,0}=T_{1,1}T_{3,0}^3T_{1,1}$  &  $(34)$   & $T_{6,1}=T_{3,0}^3T_{1,1}$  &   $(142)$    \\ \hline
$T_{6,2}=-T_{1,1}^3T_{3,0}^3T_{1,1}$  &  $(1324)$   & $T_{6,3}=-T_{1,1}^2T_{3,0}^3T_{1,1}$  &   $(123)$    \\ \hline
\end{tabular}
\end{minipage}
\end{figure}

\subsection{Galois covering of sextic curves in $\bf \mathcal{V}_5$}\label{sec-para}

To understand better the construction and structure of sextic curves in the Fano $3$-fold $\bf \mathcal{V}_5$, we look at it from the Galois point of view. 

Let $F:\mathbb{P}^1\rightarrow {\bf \mathcal{V}_5 }$ be a sextic curve that does not lie in the closed $2$-dimensional orbit $\overline{PSL_2 \cdot u^5v}$. Then we identify the projectivization of the space of $2\times 2$ nonzero (complex) matrices with ${\mathbb C}P^3$ by 
\begin{small}$$\iota: [\begin{pmatrix}a&b\\c&d\end{pmatrix}] \mapsto [a:b:c:d].$$\end{small}
Via $\iota$, the subset of $2\times 2$ matrices of zero determinant defines the following $PSL_2$-invariant hyperquadric $Q_2$ of dimension $2$,
{\small
\begin{equation}\label{2q}
Q_2\triangleq \{[a:b:c:d]\in{\mathbb C}P^3~|~ ad-bc=0\}.
\end{equation}
}
\!\!Note that we can identify $PSL_2$ with $\mathbb{P}^3 \setminus Q_2$. 

\begin{theorem}\cite[Theorem 5.2]{Chi-Xie-Xu2024}\label{lift of curve not in 2dim orbit}
Let $F:\mathbb{P}^1\rightarrow {\bf {\mathcal V}_5}\subset G(2,5)$ be a sextic curve. If $F$ does not lie in the closed $2$-dimensional orbit $\overline{PSL_2\cdot u^5v}$, then there exists a compact connected Riemann surface $g:M\rightarrow \mathbb{P}^3$ covering $F$ as in the following commutative diagram
{\small\begin{equation}\label{diagram2}
\begin{tikzcd}
M\arrow{r}{g} \arrow{d}{\varphi}&\mathbb{P}^3 \arrow[dashed]{d}{f_1}\\
\mathbb{P}^1  \arrow{r}{F} & \mathbb{P}^6
\end{tikzcd}
\end{equation}}

Moreover, $\varphi:M\rightarrow \mathbb{P}^1$ is a branched Galois covering, and the group of covering transformations $G\triangleq\{\sigma\in \Aut(M)~|~\varphi\circ\sigma=\varphi\}$ is a subgroup of $S_4$ isomorphic to the isotropy group at $uv(u^4-v^4)$ given in  \eqref{isotropy group}.
\end{theorem}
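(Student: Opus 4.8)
The plan is to produce $M$ as (a connected component of) the fibre product of $F$ with the orbit parametrization $f_1$, and then to compactify. The conceptual input is that $f_1$ in \eqref{f} realizes the open orbit as a homogeneous space: writing $w\triangleq uv(u^4-v^4)$ and recalling $PSL_2\cong \mathbb{P}^3\setminus Q_2$ from \eqref{2q}, the map $f_1([A])=[A\cdot w]$ has fibres equal to the left cosets $A\,G_0$, where $G_0$ is the isotropy group at $w$ listed in \eqref{isotropy group}. Thus $G_0$ acts on $\mathbb{P}^3\setminus Q_2$ by right translation, freely and transitively on each fibre, so that
\[ f_1\colon \mathbb{P}^3\setminus Q_2\ \longrightarrow\ PSL_2\cdot w \]
is an unramified $G_0$-Galois (principal $G_0$-) covering of degree $|G_0|=24$ onto the open orbit, which by Theorem~\ref{OrbitDecomp} is exactly $\bf\mathcal{V}_5$ minus the closed two-dimensional orbit.

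Next I would pass to the generic locus and pull the covering back. Since $F$ is a nonconstant morphism of $\mathbb{P}^1$ not contained in the proper closed subvariety $\overline{PSL_2\cdot u^5v}$, the set $B\triangleq F^{-1}(\overline{PSL_2\cdot u^5v})$ is finite; set $U\triangleq \mathbb{P}^1\setminus B$, so that $F(U)$ lies in the open orbit. Form the fibre product
\[ \widetilde U\triangleq\{(p,[A])\in U\times(\mathbb{P}^3\setminus Q_2)\ :\ F(p)=f_1([A])\}, \]
with its two projections $\varphi_0\colon\widetilde U\to U$ and $g_0\colon\widetilde U\to \mathbb{P}^3\setminus Q_2$. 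The right $G_0$-action $(p,[A])\cdot h=(p,[Ah])$ makes $\varphi_0$ an étale $G_0$-Galois covering. It need not be connected; I would take a connected component $M_0$ and let $G\subseteq G_0$ be the associated monodromy image, equivalently the stabilizer of $M_0$ under the deck action, a conjugate of $\mathrm{im}(\pi_1(U)\to G_0)$. Then $\varphi_0\colon M_0\to U$ is a connected étale $G$-Galois covering, the deck transformations being restrictions of right translations by $G_0$; in particular $G$ is a subgroup of the isotropy group $G_0\cong S_4$.

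Finally I would compactify and extend. Completing the finite unramified covering $M_0\to U$ across the punctures over $B$ yields a compact connected Riemann surface $M$ together with a branched covering $\varphi\colon M\to\mathbb{P}^1$ extending $\varphi_0$, ramified only over $B$, the local degree at a point over $b\in B$ being the order of the monodromy around $b$. The $G$-action extends by continuity and has quotient $\mathbb{P}^1$, so $\deg\varphi=|G|$, i.e.\ $\varphi$ is a branched Galois covering with covering-transformation group $G$, as required by the definition above. The projection $g_0$ extends to a morphism $g\colon M\to\mathbb{P}^3$ because a rational map from a smooth projective curve into a projective variety is everywhere regular; the adjoined points map into $Q_2$. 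The identity $F\circ\varphi=f_1\circ g$ holds on the dense open set $M_0$ by construction and hence wherever both sides are defined, giving the commutative diagram \eqref{diagram2}.

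The main obstacle is this last compactification step: one must simultaneously fill in the covering, extend the deck $G$-action to the added points, match the local ramification of $\varphi$ with the monodromy, and extend $g_0$ holomorphically into $\mathbb{P}^3$. Once it is granted that $f_1$ is a $G_0$-Galois quotient of the open orbit, the existence of the lift is just the pullback-of-a-covering construction, and the only genuinely curve-theoretic work is the passage from the étale cover of $U$ to the branched cover of $\mathbb{P}^1$ together with the regularity of $g$ at the branch points.
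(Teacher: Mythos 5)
Your proposal is essentially correct, but there is nothing in this paper to compare it against: Theorem~\ref{lift of curve not in 2dim orbit} is quoted verbatim from \cite[Theorem 5.2]{Chi-Xie-Xu2024}, and the present paper reproduces the statement without reproducing its proof. Judged on its own, your argument is the natural one and it is consistent with everything the paper does with the theorem afterwards. The key point, that $f_1$ in \eqref{f} restricted to $\mathbb{P}^3\setminus Q_2\cong PSL_2$ is a finite \'etale principal $G_0$-covering of the open orbit (Theorem~\ref{OrbitDecomp}, with $G_0$ the isotropy group \eqref{isotropy group}), is right, and the passage ``pull back along $F|_U$, take a connected component, complete to a branched cover of $\mathbb{P}^1$'' yields exactly a Galois $\varphi$ with deck group a subgroup of $G_0\cong S_4$; your observation that the stabilizer of a component is a conjugate of the monodromy image is the correct bookkeeping. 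Your construction also explains, rather than merely asserts, two facts the paper uses after the theorem: the deck transformations are right translations by elements of $G_0$, which is why $C_\sigma=g(\sigma(q))^{-1}g(q)$ is constant in $q$, and the points of $M\setminus M_0$ are forced into $Q_2$, which is where the divisor comparison $\mathcal{F}\leq\mathcal{Q}$ of Proposition~\ref{charc on 2dim orbit} lives.

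One step you should make explicit: extending $g_0$ across the punctures is not a purely topological/analytic fact (a holomorphic map of a punctured disk into $\mathbb{P}^3$ can have an essential singularity), so the appeal to ``rational maps from smooth projective curves are regular'' needs the remark that your fibre product $\widetilde U\subset U\times(\mathbb{P}^3\setminus Q_2)$ is an algebraic (quasi-projective, smooth, since $\varphi_0$ is a base change of the \'etale $f_1$) curve, whose smooth projective completion coincides with the analytic completion of the covering $M_0\to U$; then $g_0$ is rational on $M$ and hence regular. With that sentence added, the proof is complete.
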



Furthermore, the realization of the group of covering transformations $G=\{\sigma\in \Aut(M)~|~\varphi\circ\sigma=\varphi\}$ as a subgroup $\widetilde{G}$ of $S_4^*$ is given by 
\begin{small}$$G\rightarrow \widetilde{G}\subset S_4,\quad \sigma\mapsto C_\sigma\triangleq g(\sigma(q))^{-1} g(q)\in SL_2,~~\forall q\in M,$$\end{small}
where $C_\sigma$ is well-defined due to that $M$ is connected and the isotropy group $S_4$ is finite.


For the map $g=\begin{pmatrix}
a & b \\
c & d
\end{pmatrix}:M\rightarrow \mathbb{P}^3$ as in Theorem \ref{lift of curve not in 2dim orbit}, to be referred to henceforth as a {\bf Galois lift} of $F$, we associate it with two meromorphic functions
{\small\begin{equation}\label{eq-mathrmw}\mathrm{x}:=c/a,~~~\mathrm{w}:= b/a.
\end{equation}}
We employ the geometry of the octahedron to study the Galois covering $\varphi$.

The quadric $Q_2$ defined in \eqref{2q} is a saddle surface in ${\mathbb P}^3$ isomorphic to $\mathbb{P}^1\times \mathbb{P}^1$ by the parametrization {\scriptsize$\begin{pmatrix}
1 & \mathrm{w}\\
\mathrm{x} & \mathrm{w}\mathrm{x}
\end{pmatrix}$}, where each pair $(\mathrm{w},\mathrm{x})$ determines uniquely a point $p\in Q_2$, through which there passes a unique $\mathrm{w}$-ruling {\small\begin{equation}\label{ruling-w}L_{\mathrm{w}}\triangleq\{(\mathrm{w},\mathrm{x})~|~\mathrm{x}\in\mathbb{P}^1\}\end{equation}}

Referring to Figure 2, projected onto the enclosing sphere from its center followed by the stereographic projection, 
the above $26$ centers (i.e., the 6 vertices, 12 edge centers, and 8 face centers) turn out to be the roots of the polynomial equation 
\begin{equation}\label{26}
(\mathrm{w}^5-\mathrm{w})(\mathrm{w}^8+14\mathrm{w}^4+1)(\mathrm{w}^{12}-33\mathrm{w}^8-33\mathrm{w}^4+1)=0,
\end{equation}
where we also count $\mathrm{w}=\infty$ as a root. We point out that the above three polynomial factors take exactly the 26 centers as their roots, respectively. Furthermore, via this correspondence, the symmetric group $S_4$ (isomorphic to the projective binary octahedral group) acts on the $w$-rulings of $Q_2$, and is exactly the action of the isotropic group in  \eqref{isotropy group} acting on  {\scriptsize$\begin{pmatrix}
1 & \mathrm{w}\\
\mathrm{x} & \mathrm{w}\mathrm{x}
\end{pmatrix}$}
by right matrix multiplication, where the $26$ centers are related to the $26$ distinct eigenvectors of these isotropy matrices. 

Now, we introduce two important divisors to study the Galois covering~\eqref{diagram2}. In the following, we denote the degree of the covering $\varphi:M\rightarrow \mathbb{P}^1$ by $d$, and the degree of the Galois lift $g:M\rightarrow \mathbb{P}^3$ by $k$.

Let ${\mathcal Q}$ be the intersection divisor defined by $g$ and the quadric $Q_2$. By  Bezout's theorem, we have $\deg({\mathcal Q})=2k$. 

We say that a hypersurface $G=0$ of degree $t$ in ${\mathbb P}^6$ is {\bf \emph{generic}} if it does not contain the curve $F$ and it cuts out a divisor on $F$ whose support lives in $V=\mathbb{P}^1\setminus F^{-1}(Q_5)$. Projective normality  \cite[pp.230-231]{Miranda1995}) of the rational normal curve $F$ warrants the existence of generic hypersurfaces. 

 A generic hyperplane $H=\sum_{i=0}^6 c_i\, a_i=0$ in ${\mathbb P}^6$ with coordinates $[a_0: \cdots:a_6]$ cuts $\gamma=F(M)$ in a divisor $D_H$ of degree $6$ whose support lies in $V$, while $f_1$ pulls the hyperplane $H=0$ back to a hypersurface of degree $6$ in ${\mathbb P}^3$ that cuts $g$ in a divisor ${\mathcal D}$ of degree $6k$ by Bezout's theorem. Since $\varphi|_U$ is a covering map of degree $d$ over $V$, the divisor ${\mathcal D}$ contains the pullback divisor ${\mathcal D}_0\triangleq \varphi^\ast(D_H)$ of degree $6d$. Define their difference by ${\mathcal F}$, 
{\small\begin{equation}\label{computationsInterDiv}
{\mathcal F}\triangleq {\mathcal D}-{\mathcal D}_0.
\end{equation}}
\!\!We denote the support of a divisor $\bf D$ by $\supp {\bf D}$. In particular, \eqref{computationsInterDiv} infers
{\small\begin{equation}\label{degree-difference}
\deg({\mathcal F})= 6(k-d).
\end{equation}}
\!\!$\mathcal{F}$ is the fixed part of the intersection divisors of $g$ with the hypersurfaces of degree $6$ in ${\mathbb P}^3$ obtained by the coordinates of $f_1$ given in \eqref{f}, namely, 
{\small\begin{equation}\label{min}
\mathcal{F}=\min\limits_{0\leq i\leq 6}\{g^\ast(a_i\circ f_1)\}.
\end{equation}}

\noindent Note, by \eqref{min}, that $p\in {\text Supp }\, {\mathcal F}$ is determined by setting $g^\ast(a_i\circ f_1)=0,0\leq i\leq 6$, which deduces that $p$ lies in one of the six ${\mathrm w}$-rulings (see \eqref{ruling-w}).
\begin{equation}\label{free}
 \{L_\mathrm{w}\;|\: \mathrm{w}=0, \infty, ~\text{or}~ \mathrm{w}^4=1\}   
 \end{equation}
  in ${\mathbb P}^3$, and vice versa. We refer to these six lines as the {\bf free lines}, to be labeled as ${\mathcal L}_1, {\mathcal L_2},{\mathcal L}_j$ with $\mathrm{w}=(\sqrt{-1})^{j-3}, 3\leq j\leq 6,$
  in order.


 \begin{proposition}\cite[Proposition 5.1]{Chi-Xie-Xu2024}\label{charc on 2dim orbit}
 Let $F:\mathbb{P}^1\rightarrow {\bf {\mathcal V}_5}\subset \mathbb{P}^6$ be a sextic curve not lying in the closed $2$-dimensional orbit $\overline{PSL_2\cdot u^5v}$, and $g:M\rightarrow \mathbb{P}^3$ be the Galois lift of $F$ in the commutative diagram \eqref{diagram2}. Then
$\mathcal{F}\leq \mathcal{Q}.$
Moreover, for any given $p\in \supp \mathcal Q$, 
\begin{enumerate}
\item[\rm{(1)}] if $\mathrm{w}(p)$ is not associated with any of the $6$ vertices, i.e., does not satisfy the first factor of \eqref{26}, then $\ord_p(\mathcal{F})=0$ and $f_1\circ g (p)$ lies in the $1$-dimensional orbit $PSL_2\cdot u^6$, and
\item[\rm{(2)}] if $\mathrm{w}(p)$ is associated with one of the $6$ vertices, then $\ord_p(\mathcal{F})>0$ and $f_1\circ g (p)$ lies in the $1$-dimensional 
orbit if and only if $\ord_p(\mathcal{F})<\ord_p(\mathcal{Q})$. 
\end{enumerate}
 \end{proposition}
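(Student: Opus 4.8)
The plan is to analyze the rational map $f_1$ in a neighborhood of the quadric $Q_2$ of \eqref{2q}, exploiting that $uv(u^4-v^4)$ is the \emph{vertex form} of the octahedron: it factors as the product of the six linear forms $u,v,u\pm v,u\pm\sqrt{-1}\,v$ whose roots $\mathrm{w}=u/v$ are exactly the six vertices cut out by the first factor of \eqref{26}. The engine of the argument is a direct computation of $f_1$ on a rank-one matrix. If $A=\left(\begin{smallmatrix}a&b\\c&d\end{smallmatrix}\right)$ lies on $Q_2$, its rows are proportional, say $(c,d)=\kappa(a,b)$ with $\kappa=c/a$ and $\mathrm{w}=b/a$ by \eqref{eq-mathrmw}; substituting the (now degenerate) adjugate of $A$ into the six vertex forms collapses all of them onto the single line $\kappa u-v$, giving, up to a fixed nonzero constant,
\[ f_1(A)=ab\,(a^4-b^4)\,(\kappa u-v)^6 . \]
The scalar $ab(a^4-b^4)$ vanishes precisely when $\mathrm{w}\in\{0,\infty,\pm1,\pm\sqrt{-1}\}$, i.e. exactly on the six free lines \eqref{free}; this recovers the fact, already read off from \eqref{min}, that $\supp\mathcal F$ is contained in the free lines.

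Part (1) then follows at once. If $p\in\supp\mathcal Q$ and $\mathrm{w}(p)$ is not a vertex, then $g(p)\in Q_2$ is rank one with $ab(a^4-b^4)\neq0$, so $f_1\circ g(p)=(\text{nonzero})(\kappa u-v)^6$ is a nonzero perfect sixth power and hence lies in the one-dimensional orbit $PSL_2\cdot u^6$; since $f_1\circ g$ does not vanish at $p$, we get $\ord_p\mathcal F=0$. The same computation also settles $\mathcal F\le\mathcal Q$ away from the vertices: at a non-vertex point of $\supp\mathcal Q$ we have $\ord_p\mathcal F=0$, while off $\supp\mathcal Q$ the containment $\supp\mathcal F\subseteq g^{-1}(\text{free lines})\subseteq\supp\mathcal Q$ again forces $\ord_p\mathcal F=0$. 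Thus only the vertex (free-line) points remain to be treated.

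For Part (2), fix $p$ lying over a vertex. By the octahedral symmetry we may assume $\mathrm{w}(p)=0$, so $p$ lies over $\mathcal L_1$; choose a local coordinate $t$ and a lift of $g$ with $a\equiv1$, so that $g(t)=\left(\begin{smallmatrix}1&b(t)\\c(t)&d(t)\end{smallmatrix}\right)$ with $b(0)=d(0)=0$ and $c(0)=:c_0$. Writing $\ell_1=du-bv$ and $\ell_2=-cu+v$ for the two forms produced by the adjugate substitution, one has $f_1\circ g=\ell_1^5\ell_2-\ell_1\ell_2^5$; since $\ell_2(0)=-c_0u+v\neq0$ while $\ell_1(0)=0$, the leading term as $t\to0$ is $-\ell_1\,\ell_2(0)^5$, whence
\[ \ord_p\mathcal F=\mu:=\ord_t(du-bv), \qquad f_1\circ g(p)=-\,(\text{leading linear form of }\ell_1)\,(-c_0u+v)^5 . \]
This resolved value is the product of a line and a fifth power; by Proposition \ref{orbits defined by transvectant} it lies in $PSL_2\cdot u^6$ exactly when the leading form of $\ell_1=du-bv$ is proportional to $-c_0u+v$, and otherwise in the two-dimensional orbit $PSL_2\cdot u^5v$.

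It remains to match this proportionality with the order comparison, which is the crux of the argument. Since $c=c_0+O(t)$, we have $ad-bc=d-bc=(d-c_0b)-b(c-c_0)$, and the correction term has order $\ge\ord_t b+1>\mu$; hence $\ord_p\mathcal Q=\ord_t(d-c_0b)$. On the other hand the leading coefficients of $\ell_1=du-bv$ are $(d,-b)$, and these are proportional to the coefficients $(-c_0,1)$ of $-c_0u+v$ precisely when the leading terms of $d$ and $c_0b$ cancel, i.e. when $d-c_0b$ vanishes to order strictly greater than $\mu=\ord_t(du-bv)$. Combining, $\ord_p\mathcal Q>\mu=\ord_p\mathcal F$ exactly in the sixth-power case and $\ord_p\mathcal Q=\mu=\ord_p\mathcal F$ otherwise; in particular $\ord_p\mathcal F\le\ord_p\mathcal Q$ at the vertices too, completing $\mathcal F\le\mathcal Q$, and $f_1\circ g(p)\in PSL_2\cdot u^6$ iff $\ord_p\mathcal F<\ord_p\mathcal Q$. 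The main obstacle is exactly this local bookkeeping at the six vertices: one must identify $\ord_p\mathcal F$ with the order of the collapsing factor $\ell_1$ and the order of its ``defect'' $d-c_0b$ with $\ord_p\mathcal Q$, while checking that the off-diagonal correction $b(c-c_0)$ never lowers the order.
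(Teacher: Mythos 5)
Your argument is correct and is in substance the same as the authors' own proof (which this paper only quotes from \cite{Chi-Xie-Xu2024}): the adjugate computation collapsing $f_1$ on rank-one matrices to $ab(a^4-b^4)(\kappa u-v)^6$, so that non-vertex points of $\supp\mathcal{Q}$ land in $PSL_2\cdot u^6$ with $\ord_p(\mathcal{F})=0$, followed by the order bookkeeping of $\ell_1=du-bv$ against $\det g$ at the free lines. Indeed, your crux dichotomy --- $\ord_p(\mathcal{Q})=\ord_p(\mathcal{F})$ exactly when the leading form of $du-bv$ is not proportional to $-c_0u+v$, i.e.\ when $a_0d_n-b_nc_0\neq 0$ --- is precisely the local criterion the paper records as Proposition~\ref{Critical Prop Free lines}, which it describes as obtained ``from the proof of the above proposition,'' so your local analysis reproduces the original argument (modulo the harmless normalizations $a(p)\neq 0$ and the reduction to $\mathrm{w}(p)=0$ by the transitive $S_4$-action on the free lines).
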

\begin{corollary}\cite[Corollary 5.1]{Chi-Xie-Xu2024}\label{Cor4.1}
Assume the same setting as in Proposition {\rm \ref{charc on 2dim orbit}}. We have 
$\deg \varphi\leq \deg g\leq \frac{3}{2}\deg \varphi.$
Moreover, $\deg g= \deg \varphi$ if and only if $\mathcal{F}=0$, while $\deg g= \frac{3}{2}\deg \varphi$ if and only if $\mathcal{F}=\mathcal{Q}$.
\end{corollary}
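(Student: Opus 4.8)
The plan is to derive the corollary as a purely numerical consequence of two facts already in hand: the degree identity $\deg(\mathcal{F}) = 6(k-d)$ recorded in \eqref{degree-difference}, together with the divisor inequality $\mathcal{F}\leq\mathcal{Q}$ furnished by Proposition \ref{charc on 2dim orbit}, where $\deg(\mathcal{Q})=2k$ by Bezout's theorem. Throughout I write $d=\deg\varphi$ and $k=\deg g$ as set in the text.

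First I would establish the lower bound. Because $\mathcal{F}$ is defined in \eqref{min} as the pointwise minimum of the effective intersection divisors $g^\ast(a_i\circ f_1)$, it is itself an effective divisor, so $\deg(\mathcal{F})\geq 0$. Combined with $\deg(\mathcal{F})=6(k-d)$ this gives $6(k-d)\geq 0$, hence $k\geq d$, that is, $\deg\varphi\leq\deg g$. For the upper bound I would invoke $\mathcal{F}\leq\mathcal{Q}$ from Proposition \ref{charc on 2dim orbit} and pass to degrees: $6(k-d)=\deg(\mathcal{F})\leq\deg(\mathcal{Q})=2k$, which rearranges to $4k\leq 6d$, i.e. $k\leq\tfrac{3}{2}d$. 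Together these yield $\deg\varphi\leq\deg g\leq\tfrac{3}{2}\deg\varphi$.

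The equality cases then follow by reading the same inequalities backwards. Since $\mathcal{F}$ is effective, $\deg(\mathcal{F})=0$ holds exactly when $\mathcal{F}=0$; and $\deg(\mathcal{F})=6(k-d)=0$ is equivalent to $k=d$, i.e. $\deg g=\deg\varphi$. At the other extreme, $\deg g=\tfrac{3}{2}\deg\varphi$ means $k=\tfrac{3}{2}d$, equivalently $6(k-d)=2k$, i.e. $\deg(\mathcal{F})=\deg(\mathcal{Q})$; since $0\leq\mathcal{F}\leq\mathcal{Q}$ as effective divisors, equality of degrees forces $\mathcal{F}=\mathcal{Q}$, and the converse is immediate.

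I do not expect any genuine obstacle: the corollary is essentially arithmetic once the two degree formulas and the inequality $\mathcal{F}\leq\mathcal{Q}$ are granted. The only step deserving explicit mention is the effectivity of $\mathcal{F}$, which is used both to obtain $\deg(\mathcal{F})\geq 0$ for the lower bound and to upgrade the equality of degrees $\deg(\mathcal{F})=\deg(\mathcal{Q})$ to the equality of divisors $\mathcal{F}=\mathcal{Q}$ in the extremal case; this effectivity is read off directly from the description \eqref{min} of $\mathcal{F}$ as the fixed (common minimal) part of a family of effective divisors.
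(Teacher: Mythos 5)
Your proof is correct and is exactly the argument the paper's framework intends: the corollary is quoted from \cite{Chi-Xie-Xu2024} without a reproduced proof, but it follows, as you show, purely arithmetically from $\deg(\mathcal F)=6(k-d)$ in \eqref{degree-difference}, $\deg(\mathcal Q)=2k$ from Bezout, the effectivity of $\mathcal F$ visible in \eqref{min}, and the inequality $\mathcal F\leq\mathcal Q$ of Proposition \ref{charc on 2dim orbit}, with the extremal case upgraded from degrees to divisors because $\mathcal Q-\mathcal F$ is effective of degree zero. No gaps; this matches the paper's approach.
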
 
 

From the proof of the above proposition, we obtain the following local criterion to be useful in the following construction of concrete examples.

\begin{proposition}\label{Critical Prop Free lines}
Let $g(z):\Delta \rightarrow \mathbb{P}^3$, given by {\scriptsize$z\mapsto \begin{bmatrix}
a(z) & b(z) \\
c(z) & d(z)
\end{bmatrix}$}, be a holomorphic map from the unit disk $\Delta$ centered at $0$. Assume that {\scriptsize$g(0)=\begin{pmatrix}
a_0 & 0 \\
c_0 & 0
\end{pmatrix}$},
and 
\begin{small}$$b(z)=b_nz^n+o(z^n),~~d(z)=d_nz^n+o(z^n),$$\end{small}
where $(a_0,c_0)\neq (0,0),~(b_n,d_n)\neq (0,0)$ for some $n\geq 1$. Then $\mathcal{F}(0)=n$. 
Moreover, $\mathcal{Q}(0)=\mathcal{F}(0)$ if and only if {\scriptsize$\det \begin{pmatrix}
a_0 & b_n \\
c_0 & d_n
\end{pmatrix}\neq  0$}.
\end{proposition}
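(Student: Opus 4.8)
The plan is to make both maps completely explicit near $z=0$ and read off the two orders of vanishing directly. First I would rewrite $f_1\circ g$ as a single binary sextic with $z$-dependent coefficients. Since the action is $(A\cdot f)(u,v)=f(A^{-1}(u,v)^T)$ and $f_1$ is projective, I may replace $A^{-1}$ by the adjugate $\mathrm{adj}(A)=\begin{pmatrix} d&-b\\-c&a\end{pmatrix}$ (this only rescales the form by $(\det A)^6$ and, unlike $A^{-1}$, extends $f_1$ to all of $\mathbb P^3$, which is exactly what $g^*(a_i\circ f_1)$ in \eqref{min} requires). Thus $f_1\circ g$ is the sextic $U\,V\,(U^4-V^4)$ with $U=d(z)u-b(z)v$ and $V=-c(z)u+a(z)v$; one checks against \eqref{f} that this agrees with the listed $a_i$ up to the fixed scalars $\sqrt{\binom 6 i}$, which are $z$-independent and hence irrelevant to orders. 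By \eqref{min}, $\mathrm{ord}_0(\mathcal F)$ is the minimum over $i$ of the orders of the coefficients of this sextic, i.e. the order of vanishing of the whole form.

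Next I would expand at $z=0$. By hypothesis $b,d$ vanish to order at least $n$ with $\min(\mathrm{ord}_0 b,\mathrm{ord}_0 d)=n$, so $U=z^n(d_nu-b_nv)+o(z^n)$ has every coefficient of order $\geq n$, while $V\to V_0:=-c_0u+a_0v$, a nonzero linear form because $(a_0,c_0)\neq(0,0)$. Since $U^4=o(z^n)$ is negligible, the $z^n$-coefficient of $U\,V\,(U^4-V^4)$ is $-(d_nu-b_nv)\,V_0^5$, a product of two nonzero forms and hence a nonzero sextic. Therefore all coefficients vanish to order $\geq n$ and at least one vanishes to order exactly $n$, giving $\mathcal F(0)=n$.

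For the second claim I would compute the intersection with $Q_2$ from \eqref{2q} directly: $\mathcal Q(0)=\mathrm{ord}_0\big(a(z)d(z)-b(z)c(z)\big)$. To leading order $ad=a_0d_nz^n+o(z^n)$ and $bc=b_nc_0z^n+o(z^n)$, so $ad-bc=\det\!\begin{pmatrix}a_0&b_n\\ c_0&d_n\end{pmatrix}z^n+o(z^n)$. Hence $\mathcal Q(0)=n$ exactly when this determinant is nonzero and $\mathcal Q(0)>n$ otherwise; combined with $\mathcal F(0)=n$ from the previous paragraph (and consistent with $\mathcal F\leq\mathcal Q$ of Proposition \ref{charc on 2dim orbit}), this yields $\mathcal Q(0)=\mathcal F(0)$ if and only if $\det\!\begin{pmatrix}a_0&b_n\\ c_0&d_n\end{pmatrix}\neq0$.

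The computations are short, so the only real point of care is the first step: correctly passing from the inverse to the adjugate so that $f_1\circ g$ is a genuine polynomial sextic on $\mathbb P^3$, and then confirming that the leading form $(d_nu-b_nv)V_0^5$ does not vanish identically (it cannot, since both factors are nonzero forms) so that the minimum order is attained exactly at $n$ rather than jumping higher through an accidental cancellation. Everything else is bookkeeping of leading coefficients.
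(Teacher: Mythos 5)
Your proof is correct. A point of comparison: this paper never actually proves the proposition --- it is presented as a byproduct of the proof of Proposition \ref{charc on 2dim orbit}, i.e.\ of \cite[Proposition 5.1]{Chi-Xie-Xu2024} --- so your argument is necessarily a different, self-contained route, and it is the natural one: make $f_1\circ g$ explicit via the adjugate substitution and compare leading terms. The delicate points all check out. The coordinates of \eqref{f} are indeed, up to fixed nonzero constants, the coefficients of $UV(U^4-V^4)$ with $U=d\,u-b\,v$, $V=-c\,u+a\,v$ (for the record the normalization is $a_i=\sqrt{6}\,\bigl(\text{coefficient of } u^{6-i}v^i\bigr)/\sqrt{\tbinom{6}{i}}$, not quite the scalars you quote, but being $z$-independent this changes nothing), and this adjugate form is exactly the polynomial extension of $f_1$ that \eqref{min} pulls back. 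Writing $UV(U^4-V^4)=U^5V-UV^5$ shows every $u,v$-coefficient has order $\geq n$ in $z$, since each term carries a factor of $U$, while the $z^n$-coefficient $-(d_nu-b_nv)(-c_0u+a_0v)^5$ is a product of nonzero forms in the integral domain $\mathbb{C}[u,v]$, hence nonzero; this nails $\mathcal{F}(0)=n$ with no accidental cancellation. Finally $\mathcal{Q}(0)=\ord_0(ad-bc)$, and $ad-bc=(a_0d_n-b_nc_0)z^n+O(z^{n+1})$, so $\mathcal{Q}(0)=\mathcal{F}(0)$ holds exactly when $a_0d_n-b_nc_0\neq 0$, and otherwise $\mathcal{Q}(0)>n=\mathcal{F}(0)$. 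What your computation buys is verifiability within this paper; what the paper's citation buys is brevity and consistency with the divisor-theoretic framework ($\mathcal{F}\leq\mathcal{Q}$, the free-line analysis) of the prior work, of which your leading-term expansion is the local, coordinate-level shadow.
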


\subsection{Generally Ramified Family vs. Exceptional Transversal Family}
By Theorem 4.1 in \cite{Chi-Xie-Xu2024}, a sextic curve $F$ in $\bf \mathcal{V}_5$ is ramified (in the sense of harmonic sequences) at a point $q$  if and only if the tangent line of $F$ at $q$ lies in $\bf \mathcal{V}_5$. An important class of lines in $\bf \mathcal{V}_5$ is given by the rulings of the tangent developable surface $\bf{S}$ (i.e., the closed $2$-dimensional $PSL_2$-orbit), which are exactly the tangent lines of the $1$-dimensional orbit $PSL_2\cdot u^6$; in particular, that there is a unique line though $q$ in the $1$-dimensional orbit implies that $F$ is ramified at $q$ if and only if $F$ is tangent to the $1$-dimensional orbit at $q$.  Our investigation of various examples and Galois analysis have prompted the following definition. 

\begin{definition} \label{def-generally}
We say that a sextic curve $F$ in $\bf {\mathcal V}_5$ is in the {\bf generally ramified family} if $F$ is ramified at the $1$-dimensional orbit $PSL_2\cdot u^6$ somewhere;  otherwise we say that $F$ lies in the {\bf{exceptional transversal family}}.
\end{definition}

In terms of intersection divisors, we obtained necessary and sufficient conditions for the curve $F$ to belong to the generally ramified family. Define the multiplicity of $\varphi$ at $p$ to be $\varphi: s\mapsto s^{\mult_p(\varphi)}$ for a local uniformizing parameter $s$ with $s(p)=0.$ 
\begin{theorem}\label{thm}\cite[Theorem 5.3]{Chi-Xie-Xu2024}
Let $F:\mathbb{P}^1\rightarrow {\bf {\mathcal V}_5}$ 
be a sextic curve which is not contained in the closed $2$-dimensional $PSL_2$-orbit. 
$F$ belongs to the generally ramified family if and only if 
\begin{enumerate}
\item[\rm{(1)}] there exists a point $p\in \supp \mathcal Q \setminus \supp \mathcal F$ such that either $\mult_{p}(\varphi)=1$, or $\ord_p(\mathcal{Q})\geq 2$ and $\mathrm{w}(p)$ is associated with one of the $12$ edge centers and $8$ face centers, or 
\item[\rm{(2)}] there exists a point $p\in M$ such that $0<\ord_p(\mathcal{F})<\ord_p({\mathcal Q})$. 
\end{enumerate}
\end{theorem}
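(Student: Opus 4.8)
The plan is to reduce the statement to one analytic criterion and then decode that criterion through the divisors $\mathcal{Q}$ and $\mathcal{F}$. By the discussion preceding Definition \ref{def-generally}, $F$ is generally ramified if and only if $F$ is tangent to the $1$-dimensional orbit $C:=PSL_2\cdot u^6$ at some point $q$, i.e.\ $F$ meets $C$ at $q$ with $F'(q)$ parallel to $T_qC$. Since $C$ is the cuspidal edge of the tangent developable $\mathbf{S}=\overline{PSL_2\cdot u^5v}=Q_5\cap{\bf\mathcal V_5}$ (Proposition \ref{orbits defined by transvectant} and \eqref{5-quadric}), I would first establish, by putting $(f,f)_6|_{{\bf\mathcal V_5}}$ into its cuspidal-edge normal form near a point of $C$, the clean tangency criterion
$$F \text{ is tangent to } C \text{ at } q \iff \ord_q\big((F,F)_6\big)\ge 4.$$
The point is that along the cusp the quadric $(f,f)_6$ vanishes to order $2$ on $C$: a curve meeting $C$ transversally meets $Q_5$ to order $2$ or $3$, while order $\ge 4$ occurs precisely in the tangent direction to $C$.

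The second step lifts this to $M$. Because $f_1\circ g=F\circ\varphi$ and $(f,f)_6\circ f_1=\kappa\,(ad-bc)^6$ (the left side is a degree-$12$ form vanishing exactly on the irreducible quadric $Q_2$, hence a sixth power of $ad-bc$), dividing out the fixed part of order $\ord_p(\mathcal F)$ gives, for any $p\in\varphi^{-1}(q)\cap\supp\mathcal Q$ whose image lies on $C$,
$$\mult_p(\varphi)\cdot\ord_q\big((F,F)_6\big)=6\,\ord_p(\mathcal Q)-2\,\ord_p(\mathcal F).$$
Thus tangency at $q$ becomes the numerical condition $6\,\ord_p(\mathcal Q)-2\,\ord_p(\mathcal F)\ge 4\,\mult_p(\varphi)$ at some such $p$. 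Proposition \ref{charc on 2dim orbit} then tells me which $p$ contribute: either $\mathrm w(p)$ is not a vertex, forcing $\ord_p(\mathcal F)=0$, or $\mathrm w(p)$ is a vertex, in which case $\ord_p(\mathcal F)>0$ and the image lies on $C$ exactly when $\ord_p(\mathcal F)<\ord_p(\mathcal Q)$.

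The third step supplies the missing bound on $\mult_p(\varphi)$. For the Galois cover $\varphi$ the ramification index $\mult_p(\varphi)$ equals the order of the stabilizer of $p$, which is contained in the $S_4$-stabilizer of the ruling $\mathrm w(p)$; by Proposition \ref{correspondence} and \eqref{26} the latter has order $4,2,3,1$ according as $\mathrm w(p)$ is a vertex, an edge center, a face center, or a generic point, so $\varphi$ is unramified off the $26$ centers. Feeding these values into the numerical condition splits the analysis exactly along the theorem. In the non-vertex case $\ord_p(\mathcal F)=0$, and $6\,\ord_p(\mathcal Q)\ge 4\,\mult_p(\varphi)$ reads $\mult_p(\varphi)=1$ (always satisfied) or $\mult_p(\varphi)\in\{2,3\}$ with $\ord_p(\mathcal Q)\ge 2$ at an edge/face center, which is precisely condition (1). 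In the vertex case $0<\ord_p(\mathcal F)<\ord_p(\mathcal Q)$, and using $\mult_p(\varphi)\mid 4$ together with the integrality of $\ord_q((F,F)_6)$ (which forces $\ord_p(\mathcal Q)\ge\ord_p(\mathcal F)+2$ when $\mult_p(\varphi)=4$), one checks $6\,\ord_p(\mathcal Q)-2\,\ord_p(\mathcal F)\ge 4\,\mult_p(\varphi)$ holds automatically, which is precisely condition (2).

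I expect the genuine difficulty to be the first step: producing the local normal form of $(f,f)_6$ along the cuspidal edge and justifying the order-$4$ threshold, in particular verifying that $(f,f)_6$ cuts $\mathbf{S}$ with reduced structure so that the orders are not uniformly doubled, since the cuspidal singularity makes the naive first-order tangency test degenerate. A secondary subtlety is the integrality bookkeeping in the vertex case, where $\ord_p(\mathcal Q)>\ord_p(\mathcal F)$ alone does not suffice and one must invoke $\mult_p(\varphi)\mid 4$ and the free-line model of Proposition \ref{Critical Prop Free lines} to control the local behaviour of $g$ there.
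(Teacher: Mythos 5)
Your proposal is correct and takes essentially the same route as the source of this statement --- the theorem is quoted from \cite{Chi-Xie-Xu2024} rather than reproved in this paper, and the three ingredients you reconstruct (the contact-order criterion ``tangent to the $1$-dimensional orbit at $q$ iff $\ord_q(Q_5)\geq 4$'', the divisor identity $\varphi^{*}(F^{*}Q_5)+2\mathcal{F}=6\mathcal{Q}$ giving $\ord_{\varphi(p)}(Q_5)=\bigl(6\ord_p(\mathcal{Q})-2\ord_p(\mathcal{F})\bigr)/\mult_p(\varphi)$, and the stabilizer bounds on $\mult_p(\varphi)$ at the $26$ centers) are precisely Proposition 5.2, the pullback formula, and Lemma 5.1 of that reference. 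Your case split via Proposition \ref{charc on 2dim orbit}, including the parity/integrality argument forcing $\ord_p(\mathcal{Q})\geq\ord_p(\mathcal{F})+2$ when $\mult_p(\varphi)=4$, is exactly the intended bookkeeping.
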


One readily observes that $\mult_{p}(\varphi)=1$ whenever $\mathrm{w}(p)$ is not associated with any of the $26$ centers of the octahedron. Consequently, the ramified points of $\varphi$ are associated with these $26$ centers. In particular, if $\mult_{p}(\varphi)=4$, then $p\in \supp ~\mathcal{F}$.

\section{The Idea of  Construction}

In this section, we introduce our main idea for the correspondence between the classification of rational Galois lifts and the irreducible representations of degree $1$. Because we employ binary forms in the construction, the binary subgroups $G^\ast$ in $SU(2)$, rather than the subgroups of $SO(3)$, are to be invoked.

\begin{theorem}\label{main construction thm}
Let $\varphi:\mathbb{P}^1\rightarrow \mathbb{P}^1$ be a branched Galois covering of degree $d$ with the covering  group $G\subset SO(3)$. 
\begin{enumerate}
\item[\rm{(1)}] Let $F:P^1\mapsto \mathcal{V}_5$ be a sextic curve with rational Galois lift $g$ 
of degree $n$ that 
satisfies the following commutative diagram 
{\small\begin{equation}\label{diagram3}
\begin{tikzcd}
\mathbb{P}^1 \arrow{r}{g} \arrow{d}{\varphi}&\mathbb{P}^3 \arrow[dashed]{d}{f_1}\\
\mathbb{P}^1  \arrow{r}{F} & \mathbb{P}^6
\end{tikzcd}
\end{equation}}
\!\!as generally given in Theorem {\rm \ref{lift of curve not in 2dim orbit}}. Then under the natural action of $G^\ast$, the lines spanned by the row vectors of $g$ 
correspond to a pair of isomorphic yet mutually independent 1-dimensional irreducible representations within $V_n\otimes \mathbb{C}^2$. 
\item[\rm{(2)}] Conversely, let $kW$ ($k\geq 2$) denote the direct sum of $1$-dimensional irreducible representations of $G^\ast$ within $V_n\otimes \mathbb{C}^2$, each isomorphic to the representation $W$. 
Then any two independent vectors in $kW$ induce a unique curve $F:\mathbb{P}^1\rightarrow \mathbb{P}^6$ satisfying {\rm \ref{diagram3}}. 
\end{enumerate}
 \end{theorem}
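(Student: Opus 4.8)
The plan is to treat the two directions as inverse computations linked by Lemma~\ref{eigen}: the matrix form \eqref{AB} of the tensor action converts the intertwining relation satisfied by the rows of $g$ into the eigen-criterion \eqref{pqn} for a $G^\ast$-invariant line in $V_n\otimes{\mathbb C}^2$, the bridge being the symplectic identification $A^{T}=JA^{-1}J^{-1}$ recorded in the preliminaries. For (1) I write $g=\begin{psmallmatrix}a&b\\c&d\end{psmallmatrix}$ with $a,b,c,d\in V_n$ and set $r_1=(a,b),\,r_2=(c,d)\in V_n\otimes{\mathbb C}^2$. Since every deck transformation satisfies $\varphi\circ\sigma=\varphi$, the square \eqref{diagram3} gives $f_1\circ g\circ\sigma=f_1\circ g$; as $f_1$ is the orbit map with isotropy $G_0$, this is exactly the deck relation $g\circ\sigma=g\,C_\sigma^{-1}$, $C_\sigma\in G_0\cong S_4$, of Theorem~\ref{lift of curve not in 2dim orbit}. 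Realising $\sigma$ on the source ${\mathbb P}^1$ by a binary matrix $A_\sigma\in G^\ast$ identifies $g\circ\sigma$ with the entrywise action $A_\sigma\cdot g$, so reading the relation row by row and applying \eqref{AB} yields, for $i=1,2$,
\begin{equation*}
(A_\sigma\otimes A_\sigma)\,r_i \;=\; r_i\,\bigl(C_\sigma^{-1}A_\sigma^{T}\bigr).
\end{equation*}

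The heart of the matter is to show the constant factor $C_\sigma^{-1}A_\sigma^{T}$ is scalar, i.e. $C_\sigma\propto A_\sigma^{T}$. This is precisely where the symplectic identification enters: the deck group acts on the column factor ${\mathbb C}^2$ by right multiplication on the $\mathrm{w}$-rulings of $Q_2$, which under $A^{T}=JA^{-1}J^{-1}$ is the transpose of the source action, so that $C_\sigma$ and $A_\sigma$ are indeed transposes up to scalar. Granting this, the displayed identity becomes $(A_\sigma\otimes A_\sigma)\,r_i=\chi(A_\sigma)\,r_i$, so by \eqref{pqn} each $r_i$ spans a $1$-dimensional invariant line carrying one and the same character $\chi$ (hence the two are isomorphic); and they are independent, since $r_1,r_2$ proportional would force $\det g\equiv 0$, placing $g$ inside $Q_2$ of \eqref{2q} and contradicting that $F$ avoids the closed $2$-dimensional orbit. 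I expect the transpose identity $C_\sigma\propto A_\sigma^{T}$ to be the principal obstacle, demanding careful tracking of the $A$ versus $A^{T}$ versus $A^{-1}$ conventions set up for the natural action.

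For (2) the key preliminary observation is that, $W$ being $1$-dimensional, the whole isotypic summand $kW$ is acted on through the single scalar $\chi_W(A_\sigma)$; thus every vector of $kW$—in particular any chosen pair of independent $\xi=(p_1,q_1),\,\eta=(p_2,q_2)$, available exactly because $k\ge 2$—obeys \eqref{pqn} with character $\chi_W$. Assembling $g=\begin{psmallmatrix}p_1&q_1\\p_2&q_2\end{psmallmatrix}\colon{\mathbb P}^1\to{\mathbb P}^3$ and running the computation of (1) backwards, \eqref{pqn} gives $A_\sigma\cdot g=\chi_W(A_\sigma)\,g\,(A_\sigma^{-1})^{T}$; since $G^\ast\subseteq S_4^\ast$ forces $(A_\sigma^{-1})^{T}\in G_0$, the scalar drops projectively and $f_1(g\circ\sigma)=f_1(g)$. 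Hence $f_1\circ g$ is constant on the fibres of the quotient map $\varphi$ and descends to a unique $F\colon{\mathbb P}^1\to{\mathbb P}^6$ with $F\circ\varphi=f_1\circ g$, landing in $\mathcal V_5$ because $f_1$ parametrises the open orbit; uniqueness is immediate from $F=f_1\circ g\circ\varphi^{-1}$.

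Finally I would confirm that $F$ is a genuine member of the family of Theorem~\ref{lift of curve not in 2dim orbit}: genericity of $g$ on $PSL_2={\mathbb P}^3\setminus Q_2$ keeps $F$ off the closed $2$-dimensional orbit, and Proposition~\ref{orbits defined by transvectant} locates it in $\mathcal V_5$. Whether $F$ is moreover a \emph{sextic} is governed by the intersection-divisor bookkeeping of Section~\ref{sec-para}: the coordinates $a_i\circ f_1$ have degree $6$ in the matrix entries, so $f_1\circ g$ has degree $6n$ before deleting the fixed part $\mathcal F$, and $\deg F=6$ holds precisely when $\deg\mathcal F=6(n-d)$ as in \eqref{degree-difference}. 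I expect pinning down this fixed divisor—equivalently, showing the invariance of $\xi,\eta$ forces the correct vanishing of $g$ along the free lines \eqref{free}—to be the subtler point of the converse, and the verification that the later case-by-case sections carry out explicitly through the fundamental invariants.
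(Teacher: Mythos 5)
Your skeleton (deck relation $\Rightarrow$ eigen-criterion for the rows of $g$; converse by descent using Galoisness) parallels the paper's, but your part (1) breaks at exactly the step you call ``the heart of the matter.'' The claim $C_\sigma\propto A_\sigma^{T}$ is false, and the paper's construction depends on its being false. The source action $\sigma\mapsto A_\sigma$ (the paper's $\phi_1$) and the deck action $\sigma\mapsto C_\sigma=g(\sigma(q))^{-1}g(q)$ (the paper's $\phi_2$) are two \emph{independent} homomorphisms of $G^\ast$ into $SU(2)$; they are constrained only to be isomorphisms of the same abstract group, and they may differ by an outer automorphism or land in non-conjugate subgroups of $S_4^\ast$. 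Consequently the rows of $g$ are \emph{not} eigenvectors of the diagonal action $A\otimes A$ of Lemma \ref{eigen}, so \eqref{pqn} does not apply; they are eigenvectors of the twisted action $t\cdot(p,q)=\big((\phi_1(t)\cdot p),(\phi_1(t)\cdot q)\big)\phi_2(t)^{T}$ of \eqref{eigenspace problem}, which is what the theorem's ``natural action'' means. A concrete counterexample to your claim is the paper's first case: for $C_2$ with $\deg g=2$ one has $\phi_1(r)=T_{1,2}$ (diagonal) but $\phi_2(r)=T_{2,1}$ (antidiagonal, see \eqref{isotropy group} and \eqref{decompse of C2 in case the C2}), so $C_\sigma\not\propto A_\sigma^{T}$; one checks directly that the line spanned by $x^2\otimes(e_1-\xi_1e_2)$ is invariant under the twisted action with character $\chi_1$ but is sent by the diagonal action $T_{1,2}\otimes T_{1,2}$ to the line spanned by $x^2\otimes(e_1+\xi_1e_2)$. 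Your justification conflates two different actions: the deck group acts on the \emph{target} $\mathbb{P}^3$ (right multiplication on the $\mathrm{w}$-rulings of $Q_2$), while $A_\sigma$ acts on the \emph{source} $\mathbb{P}^1$; the identity $A^{T}=JA^{-1}J^{-1}$ relates a matrix to its own transpose and says nothing linking these two a priori unrelated homomorphisms.

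The error propagates to your part (2): reading $kW$ as an isotypic component of the diagonal action restricts the converse to the special case $\phi_2\propto\phi_1^{T}$, thereby omitting most of the families the theorem must produce --- precisely the cases whose enumeration (via conjugacy classes of embeddings and outer automorphisms) occupies Lemmas \ref{first type D2}, \ref{second type D2}, \ref{four isoms of D4}, \ref{unique isom of A4} and \ref{two isoms of S4}, and which distinguishes the generally ramified from the exceptional transversal families. The repair is the paper's route: from your displayed identity, conclude only that $C_\sigma^{-1}A_\sigma^{T}$ is a well-defined element of $S_4^\ast$ (unique by discreteness and transpose-closure of $S_4^\ast$), use it to \emph{define} $\phi_2$, verify it is a homomorphism, and then read the rows of $g$ as spanning eigenlines of $\phi_1\otimes\phi_2$ with common character $\lambda(t)$. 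With that correction, your remaining steps --- independence of the rows via $\det g\not\equiv 0$, descent of $f_1\circ g$ through the Galois quotient $\varphi$ to a unique $F$, and the caution that sexticity and linear fullness require the divisor bookkeeping for $\mathcal F$ --- agree with the paper.
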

(1) The idea is that for any $\sigma\in G$, and any $[x,y]\in \mathbb{P}^1$, we must have $g([x,y])$ and $g(\sigma([x,y]))$ lie in the same fiber of $f_1$ due to \eqref{diagram3}. Hence there is a unique matrix $A\in S_4^*$ such that, by \eqref{AB},
\begin{small}$$g(\sigma([x,y])) A^{T}=g([x,y]),~~\forall [x,y]\in \mathbb{P}^1,$$\end{small}
as $S_4^*$ is discrete and closed under transposing.

 Then, our problem of constructing a sextic curve in $\bf {\mathcal V}_6$ admitting a rational Galois covering is to find isomorphisms $\phi_1:G^\ast \rightarrow SU(2)$, and $\phi_2:G^\ast \rightarrow S_4^\ast \subseteq SU(2)$, such that for every $t\in G^\ast $, we have 
\begin{equation}\label{eigenspace problem}
\medmath{(\phi_1(t)\cdot g)(x,y)\,\phi_2(t)^{T}=\lambda(t)\, g(x,y),}
\end{equation}
for some complex number $\lambda(t)$, so that \eqref{diagram3} is validated via projectivization. In particular, the row vectors $(a,b)$ and $(c,d)$ of $g$ are both eigenvectors within $V_n\otimes \mathbb{C}^2$ of the $G^\ast$-action 
$$\medmath{
t\cdot \big(p(x,y), q(x,y) \big)=\big( (\phi_1(t)\cdot p)(x,y), (\phi_1(t)\cdot q)(x,y)\big) \,\phi_2(t)^{T},\quad \forall t\in G^\ast,
}$$
where $\lambda(t)$ is the character function of the two isomorphic and independent $1$-dimensional irreducible representations generated by the vectors $(a,b)$ and $(c,d)$, respectively.

(2) The converse is then also made clear accordingly. 

The Galoisness of $\varphi$ then dictates that 
there be a unique holomorphic map $F:\mathbb{P}^1\rightarrow \mathcal{V}_6,~z\mapsto [a_0(z),\ldots,a_6(z)]$ (\cite[p.~78, Thm 3.4]{Miranda1995}), such that the diagram \eqref{diagram3} commutes. 

In concrete computations, we know that the Galois covering $\varphi$ is given by $z=\epsilon(x,y)/\delta(x,y)$, where $\epsilon,~\delta$ are two invariants of $G^\ast$, which enables us to solve a system of linear equations
$(a_0(\frac{\epsilon}{\delta}),\ldots,a_6(\frac{\epsilon}{\delta}))\delta^6=f_1\circ g([x,y])$
to find the polynomials $a_0,\ldots,a_6$ (another method is to use the Normal Form in the Groebner basis theory).
 


However, we caution that the constructed curves $F$ downstairs may not be linearly full in $\mathbb{P}^6$ to form a subvariety of the parametrized space.
Moreover, $g$ must at the same time satisfy the required constraints on the free divisor $\mathcal{F}$ to make sure that the curve $F$ downstairs is sextic, to be detailed in the following sections.


In passing, 
we remark that since the symmetry group of the Fano $\mathcal{V}_5$ is $PSL_2$, we define two holomorphic curves $g,~\widetilde{g}:\mathbb{P}^1\rightarrow \mathbb{P}^3$ to be {\bf $PSL_2$-\emph{equivalent}} if there exist $A\in SL_2$ and $B\in S_4^\ast$, such that 
$AgB^T=\widetilde{g}.$
The actions of $SL_2$ on the left and $S_4^\ast$ on the right manifest geometrically by observing that any two bases of a $2$-plane in $kW$ said in item (2) of Theorem \ref{main construction thm} differ by an $A$, while any two conjugate representations of $G^*\subset S_4^*$ differ by a $B$. 

\begin{remark}
If we work in $V_n\otimes V_1$ instead of $V_n\otimes {\mathbb C}^2=V_n\otimes (V_1)^*$, then the eigenvalue problem \eqref{eigenspace problem} is changed to 
{\small\[\medmath{(\phi_1(t)\cdot \tilde{g})(x,y)\,\phi_2(t)^{-1}=\lambda(t)\, \tilde{g}(x,y).}\]}
Moreover, $\tilde{g}(x,y)$ is equivalent to $g(x,y)$ in the convention in Theorem \eqref{main construction thm} by $g(x,y)\triangleq\tilde{g}(x,y)\cdot J$, where {\scriptsize$J\triangleq \begin{pmatrix}0&-1\\1&0\end{pmatrix}=T_{2,4}$}. Therefore, they are $PSL_2$-equivalent and give the same curves downstairs.

\end{remark}

Thanks to Theorem \ref{main construction thm} and the equivalence property, 
the problem of constructing a sextic curve in $\bf {\mathcal V}_5$ admitting a rational Galois covering is transformed to finding a $2$-plane that sits in a $k$-copy of an $1$-dimensional irreducible representation in $V_n\otimes \mathbb{C}^2$ with $k\geq 2$.

We outline the approach before completing the classification as follows. 
Given a covering $\varphi$ with its associated Galois group $G$, we first identify a Galois covering $G^*$ within $S_4^*$, and set  $\phi_1$ on the left-hand side of \eqref{eigenspace problem} as the identity map from $G^*$ to $G^*$. Determining $\phi_2$ on the right hand side requires more careful analysis, where all conjugacy classes of $G^*$ in $S_4^*$ need to be enumerated to account for outer automorphisms. Next, we stratify by $\deg g \in \mathbb{Z}^+$, where the degree range follows from Corollary~\ref{Cor4.1} combined with the $\mathcal{Q}$ and $\mathcal{F}$ terms at the  ramified points of $\varphi$. We then compute the decomposition of $V_n \otimes \mathbb{C}^2$ for each candidate degree $\deg g = n$. For non-cyclic cases, our analysis reduces to the $2$-dimensional irreducible representations in the decomposition of $V_n$, whose explicit bases are constructible via invariants using Proposition~\ref{key basis prop}. Finally, we obtain all possible $k$-copies ($k \geq 2$) of a 1-dimensional irreducible representation $W$ in $V_n \otimes \mathbb{C}^2$. The Galois lift $g$ then admits the parameterization $g = M(\xi_1, \xi_2)$,
where $M \in G(2,k)$, and $\xi_1$, $\xi_2$ are column vectors of degree-$n$ polynomials constructed from the explicit basis obtained in the previous step. The constraints on $M$ are determined by studying $\det(g)$ and the divisor $\mathcal{F}$.

\section{Cyclic $C_2$ of order $2$}

In \eqref{diagram3}, assume that $\deg \varphi=2$. Up to M\"obius transformations, by Table \ref{Rational Galois Coverings}, $\varphi:\mathbb{P}^1\rightarrow \mathbb{P}^1$ is given by
 $w\mapsto z=w^2$. The associated Galois group $C_2$ 
consists of two elements generated by 
{\small\begin{equation}\label{cyclic2}
\sigma: \mathbb{P}^1\rightarrow \mathbb{P}^1,\quad\quad w\mapsto -w.
\end{equation}}

\begin{theorem}
In the case of $C_2$, let $g([x,y]):\mathbb{P}^1\rightarrow \mathbb{P}^3$ be a holomorphic curve satisfying diagram \eqref{diagram3}. Then up to M\"obius transformations on $\mathbb{P}^1$ and the $PSL_2$-equivalence, we have $\deg g = 2$ or $3$. Moreover, the following classification holds.
\begin{enumerate}
\item[\rm{(1)}] If  $\deg g=2$, then
\begin{small}$${g=\begin{pmatrix}
1 & 0 & a_3 \\
0 & 1 & b_3
\end{pmatrix}\begin{pmatrix}
x^2 & xy & y^2\\
-\xi_1 x^2 & \xi_1xy & -\xi_1 y^2
\end{pmatrix}^T,}$$\end{small} where $\xi_1=\exp(\frac{\pi i}{4})$. Such curves form a $\mathbb{P}^1$-parameterized family with $[a_3,b_3]$ as the parameter. The sextic curve $F$ downstairs determined by $g$ in \eqref{diagram3} is  generally ramified.

\item[\rm{(2)}] If $\deg g=3$, then \begin{small}$$g=\begin{pmatrix}
1 & 0 & a_3 & a_4 \\
0 & 1 & b_3 & b_4
\end{pmatrix}\begin{pmatrix}
x^3 & 0 & xy^2 & 0\\
0 & x^2y & 0 & y^3
\end{pmatrix}^T,$$\end{small} where 
{\scriptsize$\begin{pmatrix}
1 & 0 & a_3 & a_4 \\
0 & 1 & b_3 & b_4
\end{pmatrix}\in G(2,4)$} is the unique plane in $\mathbb{C}^4$ perpendicular to $(1,-1,1,-1)$ and $(i{t}^3,-{t}^2,i{t},-1)$ under the usual bilinear inner product. Such curves constitute a $\mathbb{C}\setminus\{0\}$-parameterized family with $t$ as the parameter. The sextic curve $F$ downstairs determined by $g$ in \eqref{diagram3} belongs to the exceptional transversal family. 
\end{enumerate}
\end{theorem}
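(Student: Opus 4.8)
The plan is to specialize the correspondence of Theorem~\ref{main construction thm}: a rational Galois lift $g$ of degree $n$ is the same datum as a $2$-plane spanned by two independent vectors inside a multiplicity space $kW$ ($k\ge 2$) of a $1$-dimensional irreducible representation $W$ of the binary group $G^\ast=C_2^\ast$ in $V_n\otimes\mathbb{C}^2$. Here $C_2^\ast=\langle\tau\rangle$ with $\tau=\diag(i,-i)=T_{1,2}$ the order-$4$ lift of $\sigma:w\mapsto -w$, and I fix $\phi_1=\mathrm{id}$ on the left of \eqref{eigenspace problem}. The degree is bounded immediately by Corollary~\ref{Cor4.1}, which gives $\deg\varphi\le\deg g\le\tfrac32\deg\varphi$; since $\deg\varphi=2$ this forces $n\in\{2,3\}$, and it then remains to realize both values and to exhaust all admissible $2$-planes.

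Because $C_2^\ast$ is abelian, $V_n\otimes\mathbb{C}^2$ decomposes into $1$-dimensional $\tau$-eigenlines, so no heavy machinery is needed (Proposition~\ref{key basis prop} is reserved for the larger, non-abelian groups): the eigenvalue of $\tau$ on $x^jy^{n-j}\otimes v$ is the product of $(-1)^j i^n$, read off from \eqref{V1 equi to natural action}, with the eigenvalue of $\phi_2(\tau)$ on $v\in\mathbb{C}^2$. The map $\phi_2$ is an isomorphism of $C_2^\ast$ onto a cyclic order-$4$ subgroup of $S_4^\ast$, so $\phi_2(\tau)$ ranges over the elements squaring to $-\mathrm{Id}$; up to the right $S_4^\ast$-action (part of $PSL_2$-equivalence) these split into the two conjugacy classes represented by the diagonal $T_{1,2}$ and the off-diagonal $T_{2,1}$, and I would run the analysis for both to account for the outer automorphisms flagged after Theorem~\ref{main construction thm}. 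Grouping the eigenvectors of a fixed character produces $kW$ with an explicit monomial basis; for $n=2$ one character occurs with $k=3$ and basis proportional to $x^2(1,-\xi_1),\,xy(1,\xi_1),\,y^2(1,-\xi_1)$, while for $n=3$ a character occurs with $k=4$ and the diagonal basis $(x^3,0),(0,x^2y),(xy^2,0),(0,y^3)$. Writing $g=MB^{\!\top}$ and normalizing $M$ to reduced row-echelon form (the left $SL_2$ change-of-basis freedom) yields exactly the two matrix shapes in the statement.

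It remains to cut $G(2,k)$ down to the asserted moduli and to read off the ramification type, and Corollary~\ref{Cor4.1} organizes this: $n=2=\deg\varphi$ forces $\mathcal{F}=0$, whereas $n=3=\tfrac32\deg\varphi$ forces $\mathcal{F}=\mathcal{Q}$. For $n=2$ I compute $\det g=2\xi_1\,xy\,(x^2+a_3y^2)$, whose simple zero at $w=\sqrt{-a_3}$ has $w$ generically not among the $26$ centers of \eqref{26}, hence $\mult_p(\varphi)=1$ and $p\in\supp\mathcal{Q}\setminus\supp\mathcal{F}$, so $F$ is generally ramified by Theorem~\ref{thm}(1); meanwhile the residual symmetry $w\mapsto cw$ commuting with $\varphi$ rescales $(a_3,b_3)$ with weights $(2,1)$, collapsing the parameters to the weighted projective line $[a_3,b_3]$. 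For $n=3$ the condition $\mathcal{F}=\mathcal{Q}$ says every intersection of $g$ with $Q_2$ lies on one of the six free lines \eqref{free}; the fixed vertex-fibers $w=0,\infty$ land on $\mathrm{w}=\infty,0$ automatically from the diagonal shape of $g$, and requiring $g([1:1])$ and $g([t:1])$ to fall on the free lines $\mathrm{w}=1$ and $\mathrm{w}=i$ is exactly the pair of linear conditions that the rows of $M$ be orthogonal to $(1,-1,1,-1)$ and to $(it^3,-t^2,it,-1)$; the $C_2^\ast$-equivariance then forces the remaining intersections, and the residual Möbius normalization leaves the single modulus $t\in\mathbb{C}\setminus\{0\}$. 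Since $\mathcal{F}=\mathcal{Q}$ kills both alternatives of Theorem~\ref{thm}, this family is exceptional transversal.

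The step I expect to be hardest is the degree-$3$ analysis: showing that $\mathcal{F}=\mathcal{Q}$ — equivalently that all multiplicity-counted points of $g\cap Q_2$ are driven onto the free lines — is met by precisely the orthogonal complement described above and by nothing else, with no spurious lower-dimensional strata. This needs a careful local computation at each vertex-fiber through Proposition~\ref{Critical Prop Free lines} combined with the equivariance of $g$; and I anticipate that the genuine bookkeeping cost lies in checking that the two conjugacy classes of $\phi_2$ do not generate additional inequivalent sextic families beyond the two listed, so that the classification is actually complete.
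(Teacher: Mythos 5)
Your overall skeleton matches the paper's proof (degree bounds from Corollary~\ref{Cor4.1}, eigenline decomposition of $V_n\otimes\mathbb{C}^2$ under the cyclic group, row-echelon normalization of the $2$-plane, determinant/divisor analysis, and Theorem~\ref{thm} for the ramification type), and your degree-$2$ discussion is essentially complete. But there is a genuine gap in the degree-$3$ case. You assert that $\mathcal{F}=\mathcal{Q}$ forces $g([1:1])$ onto the free line $\mathrm{w}=1$ and $g([t:1])$ onto $\mathrm{w}=i$, and you read off the two orthogonality conditions from that. In fact the divisor constraint only forces $\pm 1,\pm t$ onto \emph{some} of the free lines $\mathcal{L}_3,\dots,\mathcal{L}_6$; after using the symmetries ($x\mapsto -x$, $t\mapsto -t$, right multiplication by elements of $S_4^\ast$, which permute the free lines) one can normalize $g(1)\in\mathcal{L}_3$, but then two inequivalent sub-cases remain: $g(t)\in\mathcal{L}_4$ or $g(t)\in\mathcal{L}_3$. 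The second sub-case is perfectly consistent with all the representation-theoretic and divisor constraints and produces a bona fide $2$-plane in $4W_2$; the paper eliminates it only by computing the downstairs curve and showing it satisfies a linear relation, i.e.\ $F$ sits in $\mathbb{P}^5$ and hence is not a sextic rational normal curve (and likewise for the non-generic case $t^2=1$, which needs the extra condition $g'(1)\in\mathcal{L}_3$ from Proposition~\ref{Critical Prop Free lines} and is also ruled out as degenerate). Without this case split and exclusion, you cannot conclude that every degree-$3$ Galois lift is of the stated perpendicular form, so the classification is not proved.

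A second, smaller hole is one you flag yourself but leave open: whether the ``cross'' pairings (the class of $T_{1,2}$ with $n=2$, or of $T_{2,1}$ with $n=3$) yield further families. The paper closes this at the outset with a geometric observation your proposal lacks: since $g(0),g(\infty)$ are fixed by the deck transformation, they must lie on lines fixed by $\phi_2(r)$; now $T_{2,1}$ fixes no free line while $T_{1,2}$ fixes $\mathcal{L}_1,\mathcal{L}_2$, so $\deg g=2$ (where $\mathcal{F}=0$ forces these points off the free lines) admits only $\phi_2(r)=T_{2,1}$, and $\deg g=3$ (where $\mathcal{F}=\mathcal{Q}$ forces them onto the free lines) admits only $\phi_2(r)=T_{1,2}$. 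Incorporating this argument would replace your ``anticipated bookkeeping'' with a one-line exclusion and make the case analysis exhaustive.
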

\begin{proof}
It follows from  Corollary \ref{Cor4.1} that $2\leq \deg(g)\leq 3$. 
Corresponding to  \ref{cyclic2}, we choose $r=T_{1,2}$ given in \eqref{isotropy group} generating $C_4$ in $S_4^\ast$ and double covering $\sigma$. This gives $\phi_1$ on the left-hand side of \eqref{eigenspace problem}. To determine $\phi_2$ on the right-hand side, observe that the conjugacy classes of elements of order 
$4$ in $S_4^\ast $ are represented by $T_{1,2}$ and $T_{2,1}$. They impose distinct geometric features in that $T_{2,1}$ does not fix any free line while $T_{1,2}$ fixes the free lines $\mathcal{L}_1$ and $\mathcal{L}_2$; see 
Proposition \ref{correspondence} and the table beside Figure 2. We separate it into two cases. 

 \textbf{Case (1)}:  Assume that $\deg g=2$. Then $\deg \mathcal{F}=0$ by Corollary \ref{Cor4.1}. 
 This implies that 
 the fixed points $g(0)$ and $g(\infty)$ do not lie on the free lines. 
 Therefore, we choose $\phi_2$ 
such that $\phi_2(r)=T_{2,1}$. 

 It is well-known that $C_4=\langle r\rangle$ have $4$ irreducible representations $W_h$ of degree $1$, and the corresponding characters are given by 
\begin{small}$$\chi_h(r^k)=i^{hk},~0\leq h,k\leq 3,\quad i=\sqrt{-1}.$$\end{small}
Moreover, $\rchi_h\cdot \rchi_l=\rchi_{h+l}$, where the addition is taken modulo $4$. Through $\phi_1$, it acts on $V_2$ by $(r\cdot f)(x,y):=f(T_{1,2}^{-1}(x,y)^T)$, and we can decompose $V_2$ into 
\begin{small}$$V_2=\Span_{\mathbb{C}}\{xy\}\oplus\Span_{\mathbb{C}}\{x^2,y^2\}, $$\end{small}
and, accordingly, the character is decomposed as $\rchi_{V_2}=\rchi_0+2\rchi_2$. Moreover, through $\phi_2$,
\begin{equation}\label{decompse of C2 in case the C2}{\scriptsize
\mathbb{C}^2=\Span_{\mathbb{C}}\{e_1+\xi_1 e_2\}\oplus \Span_{\mathbb{C}}\{e_1-\xi_1 e_2\},\quad \rchi_{\mathbb{C}^2}=\chi_1+\chi_3.}
\end{equation}
Thus $\rchi_{V_2\otimes \mathbb{C}^2}=\rchi_{V_2}\cdot \rchi_{\mathbb{C}^2}=3\rchi_1+3\rchi_3$, and $V_2\otimes \mathbb{C}^2$ is decomposed into the following two subspaces
\begin{equation}\label{V2 times C2}{\scriptsize
\begin{split}
3W_1=& \Span_{\mathbb{C}}\{x^2\otimes(e_1-\xi_1 e_2), xy\otimes (e_1+\xi_1 e_2),y^2\otimes (e_1-\xi_1e_2)\},\\
3W_3=& \Span_{\mathbb{C}}\{x^2\otimes (e_1+\xi_1e_2),xy\otimes (e_1-\xi_1e_2),y^2\otimes (e_1+\xi_1e_2)\}.
\end{split}}
\end{equation}
A plane in $3W_1$ is spanned by the two rows of the following matrix 
\begin{equation}\label{two plane in C2 deg 2}\begin{small}
g=\begin{pmatrix}
a_1 & a_2 & a_3 \\
b_1 & b_2 & b_3
\end{pmatrix}\begin{pmatrix}
x^2 & xy & y^2\\
-\xi_1 x^2 & \xi_1xy & -\xi_1 y^2
\end{pmatrix}^T.
\end{small}
\end{equation}
 Meanwhile, a plane $\widetilde{g}$ in $3W_3$ is equivalent to \eqref{two plane in C2 deg 2} by $g=-i\, \widetilde{g}\,T_{1,2}$. So, it suffices for us to use the plane in $3W_1$ in the following. Notice that $g$ is parametrized by {\scriptsize$\begin{pmatrix}
a_1 & a_2 & a_3 \\
b_1 & b_2 & b_3
\end{pmatrix}\in G(2,3)\cong \mathbb{P}^2$} with the Pl\"ucker coordinates $p_{ij}:=a_ib_j-a_jb_i,~1\leq i<j\leq 3$.
By the Cauchy-Binet formula, 
we have \begin{small}$$\det g=(1+i)\sqrt{2}xy(p_{12}x^2-p_{23}y^2),$$\end{small} which implies that one of $p_{12},~p_{23}$ does not vanish as $\det g\not\equiv 0$. By interchanging $x$ and $y$, we may assume that $p_{12}\neq 0$. Then  $\sqrt{p_{23}/p_{12}}\in \supp\mathcal{Q}$. If $p_{23}\neq 0$, up to a M\"obius transformation on $\mathbb{P}^1$ that preserves $0$ and $\infty$,  
we may assume $p_{23}/p_{12}=1$. It follows that after multiplying an $SL_2$ matrix on the left, 
we can parametrize $g$ as 
{\scriptsize$\begin{pmatrix}
1 & 0 & -1 \\
0 & 1 & b_3
\end{pmatrix}\in G(2,3)$.}
If $p_{23}=0$, we can conduct a scaling $(x,y)\mapsto (\lambda x,y/\lambda)$ such that $(a_2, b_2)=(a_3, b_3)$. Then after multiplying an $SL_2$ matrix on the left, 
we can parametrize $g$ as 
{\scriptsize$\begin{pmatrix}
1 & 0 & 0 \\
0 & 1 & 1
\end{pmatrix}\in G(2,3)$.} 

In both cases, each respective sextic curve $F$ downstairs is computed by substituting $(x,y)=(\sqrt{z},1)$ into $g\cdot uv(u^4-v^4)$, i.e., by calculating $f_1$ in \eqref{f} with the explicitly given $g$. Moreover, since 
$1\in \supp \mathcal{Q}\setminus\supp \mathcal{F}$ with $\mult_1(\varphi)=1$ in the generic case or $\mathcal{Q}(0)\geq 3$ in the limit case, we obtain that the sextic curve $F$ belongs to the general ramified family by Theorem~\ref{thm}.

\textbf{Case (2)}: Assume that $\deg g=3$. Then $\mathcal{F}=\mathcal{Q}$ with degree $\deg \mathcal{F}=6$ by Corollary \ref{Cor4.1} and \eqref{degree-difference}. It follows that the ramified points $0$ and $\infty$ are mapped to points on the free lines by $g$. Therefore, we choose $\phi_2$ such that $\phi_2(r)=T_{1,2}$. 

Through $\phi_1$, $V_3$ is decomposed into
{\small
\[V_3=\Span_{\mathbb{C}}\{x^3,xy^2\}\oplus\Span_{\mathbb{C}}\{x^2y,y^3\}, \]}
\!\!and, accordingly, the character is decomposed as $\rchi_{V_3}=2\rchi_1+2\rchi_3$. Moreover, through $\phi_2$,
\begin{small}\begin{equation*}\label{decompse of C2 in case the C2 deg 3}
\mathbb{C}^2=\Span_{\mathbb{C}}\{e_1\}\oplus \Span_{\mathbb{C}}\{e_2\},\quad\rchi_{\mathbb{C}^2}=\chi_1+\chi_3.
\end{equation*}\end{small}
\!\!Thus, {\small$\rchi_{V_3\otimes \mathbb{C}^2}=\rchi_{V_3}\cdot \rchi_{\mathbb{C}^2}=4\rchi_0+4\rchi_2$}, and $V_3\otimes \mathbb{C}^2$ is decomposed into 
\begin{small}\begin{align*}\label{V2 times C2}
\begin{split}
4W_0=& \Span_{\mathbb{C}}\{x^3\otimes e_2, x^2y\otimes e_1,xy^2\otimes e_2,y^3\otimes e_1\},\\
4W_2=& \Span_{\mathbb{C}}\{x^3\otimes e_1,x^2y\otimes e_2, xy^2\otimes e_1,y^3\otimes e_2\}.
\end{split}
\end{align*}\end{small}
\!\!A plane in $4W_2$ is spanned by the two rows of the following matrix
{\small\begin{equation}\label{two plane in C2 deg 3}
g=\begin{pmatrix}
a_1 & a_2 & a_3 & a_4 \\
b_1 & b_2 & b_3 & b_4
\end{pmatrix}\begin{pmatrix}
x^3 & 0 & xy^2 & 0\\
0 & x^2y & 0 & y^3
\end{pmatrix}^T.
\end{equation}}
\!\!A plane $\widetilde{g}$ in $4W_0$ is equivalent to \eqref{two plane in C2 deg 3} by $g(x,y)= \widetilde{g}(-x,y)T_{2,0}$, so that we use the plane $g$ in $4W_2$ in the following. Notice that $g$ is parametrized by {\scriptsize$\begin{pmatrix}
a_1 & a_2 & a_3 & a_4 \\
b_1 & b_2 & b_3 & b_4
\end{pmatrix}\in G(2,4)$} with the Pl\"ucker coordinates $p_{ij}= a_ib_j-a_jb_i,~1\leq i<j\leq 4$.

Note also that $p_{13}\neq 0$; 
otherwise, one of the entries $a$ and $c$ of $g$ will vanish after multiplying an $SL_2$ matrix on the left, which  
contradicts the fact that all entries of $g$ are not zero (lest the first or the last coordinate of the sextic curve $F$ would vanish by  \eqref{f}). Similarly, $p_{24}\neq 0$. In particular, we have $(a_2,b_2)\neq (0,0)\neq (a_3,b_3)$. Hence by Proposition \ref{Critical Prop Free lines} and $\mathcal{Q}=\mathcal{F}$, we obtain that 
\begin{small}$$\mathcal{Q}(0)=\mathcal{F}(0)=1,\quad \mathcal{Q}(\infty)=\mathcal{F}(\infty)=1,$$\end{small}
which further implies that $p_{12}\neq 0$ and $p_{34}\neq 0$, to not to violate the equalities. Then we may assume that $g$ is parametrized by {\scriptsize$\begin{pmatrix}
1 & 0 & a_3 & a_4 \\
0 & 1 & b_3 & b_4
\end{pmatrix}\in G(2,4)$} with $a_4b_3\neq 0$, by multiplying an $SL_2$ matrix on the left. 

Since $\deg \mathcal{F}=6$, up to M\"obius transformations, there exists $t\in\mathbb{C}\setminus\{0\}$ such that 
\begin{small}$$\mathcal{Q}=(0)+(\infty)+(1)+(-1)+(t)+(-t).$$\end{small} 
It is easy to verify that $\pm 1$ and $\pm t$ are mapped to points on the free lines ${\mathcal L}_{j},~3\leq j\leq 6$. A direct computation shows that $\mathrm{w}(\sigma(p))=-\mathrm{w}(p)$ (${\mathrm w}$ given in \eqref{26}). This implies either $g(1)\in \mathcal{L}_3$ 
while $g(-1)\in \mathcal{L}_5$, 
or $g(1)\in \mathcal{L}_4$ 
while $g(-1)\in \mathcal{L}_6$, up to changing $x$ to $-x$. By multiplying $T_{1,3}^{-1}$ on the right, we assume that $g(1)\in \mathcal{L}_3$. 

When $t^2\neq 1$ (the generic case), then a similar discussion as above implies either $g(t)\in \mathcal{L}_3$ or $g(t)\in \mathcal{L}_4$ 
up to a change of $t$ to $-t$. If $g(t)\in \mathcal{L}_4$, 
then we obtain the perpendicular conclusion in Case (2) from 
\begin{equation}\label{eq-L3}\begin{small}g(p)\in \mathcal{L}_3 ~(\text{vs}.~\mathcal{L}_4) \Leftrightarrow g(p)(1,-1)^T=(0,0)^T ~(\text{vs}.~g(p)(1,\sqrt{-1})^T=(0,0)^T). \end{small} \end{equation}
The sextic curves $F$ downstairs are computed by substituting $(x,y)=(\sqrt{z},1)$ into {\scriptsize$g\cdot uv(u^4-v^4)/xy(x^2-y^2)(x^2-t^2y^2)$,} in which the denominator gives the divisor 
${\mathcal F}$. 
If $g(t)\in \mathcal{L}_3$, 
then similar computations demonstrate that $F$ sits in ${\mathbb P}^5$, to be ruled out; in fact, $F=[f_0:\cdots:f_6]$ satisfies the linear constraint
\begin{small}$$
-2(t+1)^3\sqrt{5}f_0+(t+1)^2\sqrt{30}f_1-2\sqrt{3}(t+1)f_2+f_3=0.
$$\end{small}
\vskip-0.6mm
When $t^2=1$ (the non-generic case), we have $\mathcal{F}(1)=2$. In addition to the perpendicular condition imposed by $g(1)\in \mathcal{L}_3$, 
there is another constraint requiring $g'(1)\in \mathcal{L}_3$, 
which arises from Proposition~\ref{Critical Prop Free lines}. Such a curve is uniquely determined as $a_3=-3,~ a_4=-2,~b_3=2,~b_4=1$. The sextic curve $F$ downstairs is computed by substituting $(x,y)=(\sqrt{z},1)$ into {\scriptsize$g\cdot uv(u^4-v^4)/xy(x^2-y^2)^2$}. This case is also ruled out since $F$ sits in ${\mathbb P}^5$; we may also obtain the conclusion by letting $t\rightarrow 1$ 
in the generic case (when $g(1),~g(t)$ both lie in ${\mathcal L}_3$). 
\end{proof}
\begin{remark} In the proof of the above theorem, we have seen that the non-genric case 
is always the limit of the generic case. 
Similarly, it suffices to prove the generic cases in the sequel. {\rm (}Proposition {\rm \ref{Critical Prop Free lines}} is necessary for the non-generic cases.{\rm )}
\end{remark}

\section{Cyclic $C_3$ of order $3$}

In \eqref{diagram3}, assume that $\deg \varphi=3$. 
Up to M\"obius transformations, by Table \ref{Rational Galois Coverings}, $\varphi:\mathbb{P}^1\rightarrow \mathbb{P}^1$ is given by
 $w\mapsto z=w^3$. The associated Galois group of $\varphi$ consists of three elements generated by 
{\small\begin{equation}\label{cyclic3}
\sigma: \mathbb{P}^1\rightarrow \mathbb{P}^1,\quad\quad w\mapsto \exp(\frac{-2\pi\sqrt{-1}}{3})w.
\end{equation}}

\begin{theorem}
In the case of $C_3$, let $g([x,y]):\mathbb{P}^1\rightarrow \mathbb{P}^3$ be a holomorphic curve satisfying diagram \eqref{diagram3}. Then up to M\"obius transformations on $\mathbb{P}^1$ and the $PSL_2$-equivalence, we have $\deg g=3$ or $4$. Moreover, the following classification holds.
\begin{enumerate}
\item[\rm{(1)}] If $\deg g=3$, then \begin{small}$$g=\begin{pmatrix}
a_1 & a_2 & a_3 \\
b_1 & b_2 & b_3
\end{pmatrix}\begin{pmatrix}
x^3 & xy^2 & y^3\\
\nu_2 x^3 & \nu_1 xy^2 & \nu_2 y^3
\end{pmatrix}^T,$$\end{small} where $\nu_1\triangleq\frac{-1+i}{1-\sqrt{3}},~\nu_2\triangleq\frac{-1+i}{1+\sqrt{3}}$. Such curves form a $\mathbb{C}$-parameterized family with $p_{13}=a_1b_3-a_3b_1$ as the parameter. The sextic curve $F$ downstairs in \eqref{diagram3} determined by $g$ is  generally ramified.
\item[\rm{(2)}] If $\deg g=4$, then $$g=\begin{pmatrix}
1 & 0 & a_3 & a_4 \\
0 & 1 & b_3 & b_4
\end{pmatrix}\begin{pmatrix}
x^4 & x^3y & xy^3 & y^4\\
\nu_1 x^4 & \nu_2 x^3y & \nu_1 xy^3 & \nu_2 y^4
\end{pmatrix}^T,$$
where {\scriptsize$\begin{pmatrix}
1 & 0 & a_3 & a_4 \\
0 & 1 & b_3 & b_4
\end{pmatrix}\in G(2,4)$} 
is the unique $2$-plane which is perpendicular to $({\nu}_1,{\nu}_2,{\nu}_1,{\nu}_2)$ and $({\nu_1t^4},{\nu_2t^3},{\nu_1t},{\nu_2})$ {\emph{(}}or $({t}^4,{t}^3,{t},1)$ in the non-generic case as $t\to 1${\emph{)} under the bilinear inner} product.
This class depends on one freedom $t$ and belongs to the exceptional transversal family. 

\end{enumerate}

\end{theorem}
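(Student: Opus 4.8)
The plan is to follow the template of the $C_2$ case. First, Corollary \ref{Cor4.1} with $\deg\varphi=3$ gives $3\le\deg g\le 9/2$, hence $\deg g\in\{3,4\}$. Corresponding to the generator $\sigma$ of \eqref{cyclic3}, I would fix on the left of \eqref{eigenspace problem} an order-three element $r\in S_4^\ast$ covering $\sigma$ (with its standard action on $V_n$), and on the right an order-three $\phi_2(\sigma)\in S_4^\ast$. Unlike the $C_2$ case, all order-three elements of $S_4^\ast$ are conjugate (the single class of face rotations of the octahedron; Proposition \ref{correspondence}(4)), so up to the right $S_4^\ast$-action there is essentially one choice of $\phi_2$; its two eigenvectors $e_1+\nu_1 e_2$ and $e_1+\nu_2 e_2$ define $\nu_1,\nu_2$ and point at two antipodal \emph{face} centers. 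Since face centers are never free lines (compare the factors of \eqref{26}), the ramification points $0,\infty$ of $\varphi$ are carried by $g$ to face-center rulings, so by Proposition \ref{charc on 2dim orbit} they contribute nothing to $\mathcal F$.

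For $\deg g=3$: writing $\omega=e^{2\pi i/3}$ and $\chi_h(r)=\omega^h$, I would compute $\chi_{V_3}=2\chi_0+\chi_1+\chi_2$ and $\chi_{\mathbb C^2}=\chi_1+\chi_2$, whence $V_3\otimes\mathbb C^2=2W_0\oplus 3W_1\oplus 3W_2$. The piece $2W_0=\Span\{x^2y\otimes v_{\chi_2},\,xy^2\otimes v_{\chi_1}\}$ has common factor $xy$ and yields only a degree-one map; thus only the three-dimensional pieces $3W_1,3W_2$ (interchanged by the residual symmetry) survive, giving the stated basis and a $2$-plane in $G(2,3)\cong\mathbb P^2$ with Pl\"ucker coordinates $[p_{12}:p_{13}:p_{23}]$. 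A Cauchy--Binet computation gives $\det g=(\nu_1-\nu_2)\,xy^2\,(p_{12}x^3-p_{23}y^3)$, in which $p_{13}$ drops out, so non-degeneracy forces $(p_{12},p_{23})\neq(0,0)$. Using the scalings $(x,y)\mapsto(sx,s^{-1}y)$ (the M\"obius maps commuting with $\varphi$) I would normalize $p_{12}=p_{23}=1$, leaving $p_{13}\in\mathbb C$ as the modulus. Finally the three roots of $x^3-y^3$ form a $C_3$-orbit with $\mult_p(\varphi)=1$ lying in $\supp\mathcal Q\setminus\supp\mathcal F$ (here $\mathcal F=0$), so condition (1) of Theorem \ref{thm} shows $F$ is generally ramified.

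For $\deg g=4$: now $\deg\mathcal F=6(k-d)=6$ and $\deg\mathcal Q=8$. From $\chi_{V_4}=\chi_0+2\chi_1+2\chi_2$ one gets $V_4\otimes\mathbb C^2=4W_0\oplus 3W_1\oplus 3W_2$, and the two three-dimensional pieces carry common factors $y$ (resp.\ $x$) and reduce to degree three; only $4W_0$, spanned by $x^4,x^3y,xy^3,y^4$ tensored with the appropriate eigenvectors, produces genuine quartic maps, giving a $2$-plane in $G(2,4)$. Cauchy--Binet yields $\det g=(\nu_2-\nu_1)\,xy\,\bigl(p_{12}x^6+(p_{14}-p_{23})x^3y^3+p_{34}y^6\bigr)$. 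Since $\ord_0\mathcal F=\ord_\infty\mathcal F=0$ and $\mathcal F\le\mathcal Q$ with $\deg\mathcal F=6$, the six simple roots of the sextic factor must each satisfy $\ord_p\mathcal F=\ord_p\mathcal Q=1$, hence map to vertices (free lines) by Proposition \ref{charc on 2dim orbit}. Imposing that the two $C_3$-orbit representatives $g([1:1])$ and $g([t:1])$ lie on the free line $\mathcal L_1$ translates, via $\mathrm w=b/a$ and \eqref{free}, into the vanishing of $b,d$ there, i.e.\ perpendicularity of the $2$-plane to $(\nu_1,\nu_2,\nu_1,\nu_2)$ and $(\nu_1 t^4,\nu_2 t^3,\nu_1 t,\nu_2)$; two such generic vectors pin down a unique point of $G(2,4)$, giving the claimed one-parameter family. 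Exceptional transversality follows from Theorem \ref{thm}: $\supp\mathcal Q\setminus\supp\mathcal F=\{0,\infty\}$ with $\mult_p(\varphi)=3$ and $\ord_p\mathcal Q=1$, so condition (1) fails, while $\ord_p\mathcal F=\ord_p\mathcal Q$ at every remaining point of $\supp\mathcal Q$, so condition (2) fails.

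I expect the main obstacle to be the geometric bookkeeping in the $\deg g=4$ case: proving rigorously that only $4W_0$ gives honest quartic maps, converting the requirement that $F$ be sextic into the free-line incidence conditions, performing the normalization with the residual M\"obius-and-$S_4^\ast$ symmetry, and excluding the degenerate incidences in which $F$ fails to be linearly full (landing in a hyperplane $\mathbb P^5$), exactly as a single linear relation among coordinates discarded a branch in the $C_2$ proof. The non-generic specialization $t\to1$ is handled as a limit, the second incidence being replaced by a first-order tangency condition supplied by Proposition \ref{Critical Prop Free lines}; by the Remark following the $C_2$ theorem it suffices to treat the generic case in detail.
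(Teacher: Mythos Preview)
Your proposal is correct and follows essentially the same route as the paper. The only structural difference is that you work with the cyclic group $C_3$ directly (an order-three generator on both sides), whereas the paper passes to the double cover $C_6$ generated by $r=\Diag\{\eta,\eta^{-1}\}$ with $\eta=e^{\pi i/3}$ and $\phi_2(r)=T_{3,1}$; since $-\Id$ acts by the scalar $(-1)^{n+1}$ on $V_n\otimes\mathbb C^2$ and imposes no constraint in \eqref{eigenspace problem}, the two setups yield identical isotypic decompositions (your $\chi_0,\chi_1,\chi_2$ are the paper's $\chi_0,\chi_2,\chi_4$). Two small points: for $\deg g=3$ the paper's argument that $F$ is generally ramified is more uniform than yours---it uses $\ord_\infty(\mathcal Q)\ge 2$ at the face-center point $\infty\in\supp\mathcal Q\setminus\supp\mathcal F$ (second alternative of Theorem~\ref{thm}(1)), which covers the limit cases $p_{12}=0$ or $p_{23}=0$ without appeal to the generic orbit. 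For $\deg g=4$, after normalizing $g(1)\in\mathcal L_1$ one can only reduce $g(t)$ to $\mathcal L_1$ \emph{or} $\mathcal L_2$, and in this $C_3$ case \emph{neither} branch is discarded for failure of linear fullness (contrary to your expectation from the $C_2$ template); the two incidence conditions simply give two parametrizations of the same one-parameter family, which is why the theorem statement records both perpendicularity vectors.
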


\begin{proof}
It follows from  Corollary \ref{Cor4.1} that $3\leq \deg(g)\leq 4$. 
Corresponding to  \eqref{cyclic3}, we choose $r=\Diag\{\eta,\eta^5\}$ generating a $C_6$ in $SU(2)$ and double covering $\sigma$, where $\eta=\exp(\frac{\pi i}{3})$. This gives $\phi_1$ on the left-hand side of \eqref{eigenspace problem}. Since in $S_4^*$, up to conjugation, the element of order $6$ is $T_{3,1}$, we choose $\phi_2$ on the right hand side of \eqref{eigenspace problem} such that $\phi_2(r)=T_{3,1}$.  
It is well-known that $C_6:=\langle r\rangle$ has $6$ irreducible representations $W_h$ of degree $1$, and the corresponding characters are given by 
\begin{small}$$\rchi_h(r^k)=\eta^{hk},~0\leq h,k\leq 5,$$\end{small}
where $\eta:=\exp\frac{i\pi}{3}$. Moreover, $\rchi_h\cdot \rchi_l=\rchi_{h+l}$, where the addition is taken modulo $6$.

\textbf{Case (1)}: Assume that $\deg g=3$, then $\deg \mathcal{F}=0$. 
Through $\phi_1$, $V_3$ is decomposed into
\begin{small}$$V_3=\Span_{\mathbb{C}}\{xy^2\}\oplus\Span_{\mathbb{C}}\{x^3,y^3\}\oplus\Span_{\mathbb{C}}\{x^2y\},$$ \end{small}
and, accordingly, the character is decomposed as {\small$\rchi_{V_3}=\rchi_1+2\rchi_3+\rchi_5$. Through $\phi_2$}, we have 
\begin{equation}\label{decompse of C2 in case the C3}\medmath{
\mathbb{C}^2=\Span_{\mathbb{C}}\{e_1+\nu_1 e_2\}\oplus \Span_{\mathbb{C}}\{e_1+\nu_2 e_2\},\quad\rchi_{\mathbb{C}^2}=\chi_1+\chi_5,}
\end{equation}
where $\nu_1\triangleq (-1+i)/(1-\sqrt{3}),\nu_2\triangleq (-1+i)/(1+\sqrt{3})$. Thus {\small$\rchi_{V_3\otimes \mathbb{C}^2}=\rchi_{V_3}\cdot \rchi_{\mathbb{C}^2}=2\rchi_0+3\rchi_2+3\rchi_4$, and $V_3\otimes \mathbb{C}^2$} is decomposed into the following three subspaces
{\scriptsize\begin{align*}\label{V3 times C2 in Case C3}
\begin{split}
 2W_0=& \Span_{\mathbb{C}}\{x^2y\otimes (e_1+\nu_1e_2),xy^2\otimes(e_1+\nu_2e_2) \},\\
3W_2=& \Span_{\mathbb{C}}\{x^3\otimes (e_1+\nu_2e_2),xy^2\otimes (e_1+\nu_1e_2),y^3\otimes (e_1+\nu_2e_2)\},\\
3W_4=& \Span_{\mathbb{C}}\{x^3\otimes (e_1+\nu_1e_2),x^2y\otimes (e_1+\nu_2e_2),y^3\otimes (e_1+\nu_1e_2)\}.
\end{split}
\end{align*}}
\!\!We rule out $2W_0$ as vectors in it have common divisors. 
A plane in $3W_2$ is spanned by the two rows of the following matrix (the coefficient matrices before $x^3$ and $y^3$ are not zero)
{\small\begin{equation}\label{two plane in 3W2 deg 3}
g=\begin{pmatrix}
a_1 & a_2 & a_3 \\
b_1 & b_2 & b_3
\end{pmatrix}\begin{pmatrix}
x^3 & xy^2 & y^3\\
\nu_2 x^3 & \nu_1 xy^2 & \nu_2 y^3
\end{pmatrix}^T.
\end{equation}}
\!\!A plane $\widetilde{g}(x,y)$ in $3W_4$ is equivalent to \eqref{two plane in 3W2 deg 3} by $g(x,y)= \frac{\sqrt{3}-1}{\sqrt{2}}\widetilde{g}(y,x)T_{2,3}$, so that we use the plane \eqref{two plane in 3W2 deg 3} in the following. Using the Pl\"ucker coordinates $p_{ij}=a_ib_j-a_jb_i,~1\leq i<j\leq 3$, 
we have \begin{small}$$\det g=(1-\sqrt{-1})\sqrt{3}xy^2(p_{12}x^3-p_{23}y^3).$$\end{small} 
\!\!Thus, up to M\"obius transformations on $\mathbb{P}^1$, 
the divisor $\mathcal{Q}$ equals either {\small$(0)+2(\infty)+\varphi^{-1}(1)$} (the generic case when $p_{12}=p_{23}\neq 0$), {\small$(0)+5(\infty)$}, or {\small$4(0)+2(\infty)$} (the last two are limit cases corresponding to $p_{12}=0$ or $p_{23}=0$, respectively). In the last two cases, we may assume that $p_{13}=1$ by scaling $y\mapsto \lambda y$ so that these cases give singleton curves.

In all the three cases, the sextic curves $F$ downstairs 
are computed by $g([z^\frac{1}{3},1])\cdot uv(u^4-v^4)$. Since 
$\ord_{\infty}(\mathcal{Q})\geq2$, $F$ is tangent to the $1$-dimensional orbit at $F(\infty)$ by Theorem~\ref{thm}. 

\textbf{Case (2)}: Assume that $\deg g=4$. 
Now $V_4$ is decomposed into
{\scriptsize \[V_4=\Span_{\mathbb{C}}\{x^2y^2\}\oplus\Span_{\mathbb{C}}\{x^4,xy^3\}\oplus\Span_{\mathbb{C}}\{x^3y,y^4\}, \]}
\!\!and, accordingly, the character is decomposed as {\small$\rchi_{V_4}=\rchi_0+2\rchi_2+2\rchi_4$}. 
It follows from \eqref{decompse of C2 in case the C3} that {\small$\rchi_{V_4\otimes \mathbb{C}^2}=\rchi_{V_4}\cdot \rchi_{\mathbb{C}^2}=3\rchi_1+4\rchi_3+3\rchi_5$}, and $V_4\otimes \mathbb{C}^2$ is decomposed into the following three subspaces
{\scriptsize\begin{align*}\label{V4 times C2 in C3 deg 4}
\begin{split}
3W_1=& \Span_{\mathbb{C}}\{x^4\otimes (e_1+\nu_2e_2),x^2y^2\otimes(e_1+\nu_1e_2), xy^3\otimes (e_1+\nu_2e_2)\},\\
4W_3=& \Span_{\mathbb{C}}\{x^4\otimes(e_1+\nu_1e_2),x^3y\otimes(e_1+\nu_2e_2),xy^3\otimes(e_1+\nu_1e_2),y^4\otimes(e_1+\nu_2e_2)\},\\
3W_5=& \Span_{\mathbb{C}}\{x^3y\otimes(e_1+\nu_1e_2),x^2y^2\otimes(e_1+\nu_2e_2),y^4\otimes(e_1+\nu_1e_2)\}.
\end{split}
\end{align*}}
\!\!We rule out $3W_1$ and $3W_5$ as vectors in them have common divisors. 
A plane in $4W_3$ is spanned by the two rows of the following matrix (the coefficient matrices before $x^4$ and $y^4$ are not zero)
{\small\begin{equation}\label{two plane in C3 deg 4}
g=\begin{pmatrix}
a_1 & a_2 & a_3 & a_4 \\
b_1 & b_2 & b_3 & b_4
\end{pmatrix}\begin{pmatrix}
x^4 & x^3y & xy^3 & y^4\\
\nu_1 x^4 & \nu_2 x^3y & \nu_1 xy^3 & \nu_2 y^4
\end{pmatrix}^T.
\end{equation}}
\!\!Since $\deg\mathcal{F}=6$, while g(0) and $g(\infty)$ do not lie on the free lines, by scaling, we may assume that {\small$\mathcal{F}=\varphi^{-1}(1)+\varphi^{-1}(t^3)$} (the generic case), where $t^3\neq 0,1,\infty$, or {\small$\mathcal{F}=2\varphi^{-1}(1)$}. (We discuss the generic case only. The non-generic case is the limit of the former by letting $t\rightarrow 1$). Moreover, since 
{\small$\mathcal{Q}=\mathcal{F}+(0)+(\infty)$}, from Proposition \ref{charc on 2dim orbit} and Theorem~\ref{thm}, we obtain that the sextic curve $F$ downstairs 
belongs to the exceptional transversal family.

Write $p_{ij}=a_ib_j-a_jb_i,~1\leq i,j\leq 4$. 
From \begin{small}$$\det g=(-1+\sqrt{-1})\sqrt{3}xy(p_{12}x^6+(p_{14}-p_{23})x^3y^3+p_{34}y^6),$$\end{small}
we have $p_{12}\neq 0$ so that we may assume that {\scriptsize$\begin{pmatrix}
a_1 & a_2 \\
b_1 & b_2
\end{pmatrix}=\Id_2$} after multiplying an $SL_2$ matrix. 

By computing the induced action of $\phi_2(r)$ on the roots ${\mathrm w}$ in \eqref{eq-mathrmw}, it is straightforward to verify that the orbit $\varphi^{-1}(1)$ is mapped to points on three distinct free lines, which are either $\{\mathcal{L}_1, \mathcal{L}_4, \mathcal{L}_5\}$ or $\{\mathcal{L}_2, \mathcal{L}_3, \mathcal{L}_6\}$. Up to a M\"obius transformation on $\mathbb{P}^1$, 
we may assume that $g(1)$ lies on ${\mathcal L}_1$ or ${\mathcal L}_2$. By further multiplying $T_{2,3}$ on the right, 
we may assume $g(1)\in {\mathcal L}_1$. Likewise, interchanging $t$ with another point of $\varphi^{-1}(t^3)$, we may assume $g(t)\in {\mathcal L}_1$ or ${\mathcal L}_2$. The conclusion then follows from the characterization  
\begin{equation}\label{eq-L12}\begin{small}g(p)\in \mathcal{L}_1 ~(\text{resp}.~\mathcal{L}_2) \Leftrightarrow g(p)(0,1)^T=(0,0)^T ~(\text{resp}.~g(p)(1,0)^T=(0,0)^T). \end{small} \end{equation}

In both cases, the sextic curves $F$ downstairs are computed by substituting $(x,y)=(z^{\frac{1}{3}},1)$ into {\scriptsize$g\cdot uv(u^4-v^4)/(w^3-1)(w^3-t^3)$}.
\end{proof}

\section{Cyclic $C_4$ of order $4$}

In \eqref{diagram3}, assume that $\deg \varphi=4$ and $G=C_4$. 
Up to M\"obius transformations, by Table \ref{Rational Galois Coverings}, $\varphi:\mathbb{P}^1\rightarrow \mathbb{P}^1$ is given by
 $w\mapsto z=w^4$. 
 The associated Galois group of $\varphi$ consists of three elements generated by 
{\small\begin{equation}\label{cyclic4}
\sigma: \mathbb{P}^1\rightarrow \mathbb{P}^1,\quad\quad w\mapsto -\sqrt{-1}w.
\end{equation}}

\begin{theorem}
In the case of $C_4$, let $g([x,y]):\mathbb{P}^1\rightarrow \mathbb{P}^3$ be a holomorphic curve satisfying diagram \eqref{diagram3}. 
Then $\deg g=5$, and up to M\"obius transformations on $\mathbb{P}^1$, 
$g$ is equivalent to 
\begin{small}$$\begin{pmatrix}
1 & a_2& 0 & a_4 \\
0 & b_2 & 1 & b_4
\end{pmatrix}\begin{pmatrix}
x^5 & 0 & xy^4 & 0 \\
0 & x^4y & 0 & y^5
\end{pmatrix}^T.$$\end{small}
where {\scriptsize$ \begin{pmatrix}
1 & a_2& 0 & a_4 \\
0 & b_2 & 1 & b_4
\end{pmatrix}\in G(2,4)$} 
is a $2$-plane that is perpendicular to $(1,-1,1,-1)$ under the bilinear inner product. This class depends on two freedoms $a_4,~b_4$ and belongs to the generally ramified family.
\end{theorem}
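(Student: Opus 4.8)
The plan is to follow the template established for $C_2$ and $C_3$: first pin down $\deg g$, then decompose $V_n\otimes\mathbb{C}^2$ under a suitable pair $(\phi_1,\phi_2)$ and isolate the one summand giving a non-degenerate full curve, and finally read the normal form and the ramification type off the divisors $\mathcal Q$ and $\mathcal F$. Corresponding to $\sigma:w\mapsto-\sqrt{-1}\,w$ in \eqref{cyclic4}, I would choose $r=\Diag\{\zeta,\zeta^{-1}\}$ of order $8$ double-covering $\sigma$ (with $\zeta=e^{\pi i/4}$) for $\phi_1$, and, since the order-$8$ class fixing two free lines is represented by the diagonal $T_{1,1}$ of \eqref{isotropy group}, set $\phi_2(r)=T_{1,1}$. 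Writing $\chi_h(r)=\zeta^h$ for the characters of $C_8=\langle r\rangle$, the monomial $x^{n-j}y^j\in V_n$ carries $\chi_{2j-n}$ while $e_1,e_2$ carry $\chi_1,\chi_7$. By Corollary \ref{Cor4.1}, $4\le\deg g\le 6$. The value $\deg g=4$ is excluded at once: the points $0,\infty$ are totally ramified for $\varphi$ with $\mult=4$, and $\mult_p(\varphi)=4$ forces $p\in\supp\mathcal F$ (the observation following Theorem \ref{thm}), so $\mathcal F\neq 0$ and $\deg g>\deg\varphi$.

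To rule out $\deg g=6$ I would decompose $V_6\otimes\mathbb{C}^2=3W_1\oplus 4W_3\oplus 4W_5\oplus 3W_7$ via the character bookkeeping above and observe that every summand of multiplicity $\ge 2$ consists of tensors sharing a common polynomial factor in all entries; for instance $4W_3=\Span\{x^6\otimes e_1,\ x^2y^4\otimes e_1,\ x^5y\otimes e_2,\ xy^5\otimes e_2\}$ has the common factor $x$, so every such $g$ reduces to degree $5$, a contradiction. Hence $\deg g=5$. The same device applied to $V_5\otimes\mathbb{C}^2=2W_0\oplus 3W_2\oplus 4W_4\oplus 3W_6$ discards $2W_0,3W_2,3W_6$ (common factors $x^2y^2$, $x$, $y$ respectively) and leaves only
\[
4W_4=\Span\{x^5\otimes e_1,\ x^4y\otimes e_2,\ xy^4\otimes e_1,\ y^5\otimes e_2\},
\]
whose $2$-planes are exactly the matrices of the theorem, $g=M\,(x^5,\,x^4y,\,xy^4,\,y^5)^{T}$ with $M\in G(2,4)$ arranged as in the statement.

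Next I would normalize. Expanding, one finds
\[
\det g=xy\big(p_{12}x^8+(p_{14}-p_{23})x^4y^4+p_{34}y^8\big),
\]
so $\deg\mathcal Q=10=2\deg g$. I claim $p_{13}\neq 0$: otherwise the first and third columns of $M$ are proportional, and after a left $SL_2$ move one of the entries $a,c$ of $g$ vanishes identically, forcing a coordinate of $F$ in \eqref{f} to vanish, so $F$ would not be linearly full. Thus a left $SL_2$ action normalizes the first and third columns of $M$ to $(1,0)^T$ and $(0,1)^T$. By Proposition \ref{Critical Prop Free lines} the ramification points $0,\infty$ each contribute order $1$ to $\mathcal F$, leaving a single principal $C_4$-orbit of degree $4$; a scaling $w\mapsto\lambda w$ places it at $w^4=1$. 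Since $\phi_2(r)=T_{1,1}$ permutes $\mathcal L_3,\dots,\mathcal L_6$ cyclically, the whole orbit lands on free lines as soon as $g([1:1])\in\mathcal L_3$, which I may arrange by right multiplication by a power of $T_{1,1}$. As $g([1:1])\in\mathcal L_3$ means $a=b$ and $c=d$ at $[1:1]$, this is precisely $M(1,-1,1,-1)^{T}=0$, the stated perpendicularity, cutting the $4$-dimensional $G(2,4)$ down to the announced $2$-parameter family in $a_4,b_4$.

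For the ramification type I would produce a point of $\supp\mathcal Q\setminus\supp\mathcal F$ over the unramified locus. With the normalization one checks $p_{12}+p_{14}-p_{23}+p_{34}=0$, so $x^4-y^4$ divides the octic factor and
\[
p_{12}x^8+(p_{14}-p_{23})x^4y^4+p_{34}y^8=(x^4-y^4)\big((1-b_4)x^4+a_4y^4\big),
\]
the first factor recovering the $\mathcal F$-orbit $w^4=1$. The four roots of $(1-b_4)w^4+a_4=0$ are generically distinct non-vertex points lying in $\supp\mathcal Q\setminus\supp\mathcal F$ with $\mult_p(\varphi)=1$; by Proposition \ref{charc on 2dim orbit} they map into the $1$-dimensional orbit, so Theorem \ref{thm}(1) gives that $F$ is generally ramified. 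The degenerations ($b_4=1$, or coincidences of these roots with the vertices) appear as limits handled by Proposition \ref{Critical Prop Free lines}, as in the earlier cases. The main obstacle I anticipate is exactly this divisor bookkeeping: keeping the domain coordinate $w$ distinct from the target coordinate $\mathrm w=b/a$, verifying that a single principal orbit occurs so that one perpendicularity condition suffices, and locating the residual non-vertex points of $\mathcal Q$ — the feature that, unlike $C_2$ and $C_3$, makes this a genuinely two-parameter, generally ramified family.
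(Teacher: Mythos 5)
Your proposal is correct and follows essentially the same route as the paper: the same choice $\phi_1(r)=\phi_2(r)=T_{1,1}$, the same exclusions of $\deg g=4$ (total ramification at $0,\infty$ forces $\mathcal F\neq 0$) and $\deg g=6$ (every summand of $V_6\otimes\mathbb{C}^2$ has a common factor), the same summand $4W_4$ of $V_5\otimes\mathbb{C}^2$, the same Pl\"ucker/$SL_2$ normalization and $\mathcal{L}_3$-condition yielding perpendicularity to $(1,-1,1,-1)$, and the same appeal to Theorem \ref{thm}. The one point the paper treats that you do not is the second order-$8$ conjugacy class $\phi_2(r)=-T_{1,1}$ --- your free-line criterion cannot exclude it, since $T_{1,1}$ and $-T_{1,1}$ induce the same projective action --- but the paper's Remark \ref{choose of -T11 in C4} shows that choice reproduces the same decomposition and normal form, so your conclusion is unaffected.
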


\begin{proof}
It follows from  Corollary \ref{Cor4.1} that $4\leq \deg(g)\leq 6$. If $\deg g=4$, then $\mathcal{F}=0$. But this contradicts the fact that $0, \infty\in \mathcal{F}$ since $\mult_0(\varphi)=\mult_\infty(\varphi)=4$. 

Corresponding to  \eqref{cyclic3}, we choose $r=T_{1,1}$ generating a $C_8$ in $SU(2)$ and double covering $\sigma$. This gives $\phi_1$ on the left-hand side of \eqref{eigenspace problem}. Since in $S_4^*$, up to conjugation, the element of order $8$ is $\pm T_{1,1}$, we choose $\phi_2$ on the right-hand side of \eqref{eigenspace problem} such that $\phi_2(r)=T_{1,1}$ and leave the other case to be discussed similarly in Remark \ref{choose of -T11 in C4} below.  
It is well-known that $C_8:=\langle r\rangle$ have $8$ irreducible representations $W_h$ of degree $1$, and the corresponding characters are given by 
\begin{small}$$\rchi_h(r^k)=\xi_{hk},~0\leq h,k\leq 7,$$\end{small}
where $\xi_l:=\exp(\frac{\sqrt{-1}l\pi}{4})$. Moreover, $\rchi_h\cdot \rchi_l=\rchi_{h+l}$, where the addition is taken modulo $8$.

\textbf{Case (1)}: Assume $\deg g=5$. Then $\deg \mathcal{F}=6$. 
Through $\phi_1$, $V_5$ is decomposed into
\begin{small}$$V_5=\Span\{x^2y^3\}\oplus\Span\{x^5,xy^4\}\oplus\Span\{x^4y,y^5\}\oplus\Span\{x^3y^2\},$$\end{small}
and, accordingly, the character is decomposed as $\rchi_{V_5}=\rchi_1+2\rchi_3+2\rchi_5+\rchi_7$. 
Through $\phi_2$, 
{\small\begin{equation}\label{decompse of C2 in deg 5 c4}
\mathbb{C}^2=\Span\{e_1\}\oplus \Span\{e_2\},~~~\rchi_{\mathbb{C}^2}=\chi_1+\chi_7.
\end{equation}}
\!\!Thus {\small$\rchi_{V_5\otimes \mathbb{C}^2}=\rchi_{V_5}\cdot \rchi_{\mathbb{C}^2}=2\rchi_0+3\rchi_2+4\rchi_4+3\rchi_6$}, and {\small$V_5\otimes \mathbb{C}^2$} is decomposed into 
\begin{equation}\label{direct sum decomposition C4}
{\scriptsize\begin{split}
2W_0=& \Span\{x^3y^2\otimes e_1,x^2y^3\otimes e_2\},\quad 3W_2=\Span\{x^5\otimes e_2,x^2y^3\otimes e_1,xy^4\otimes e_2\},\\
4W_4=& \Span\{x^5\otimes e_1, x^4y\otimes e_2,xy^4\otimes e_1,y^5\otimes e_2\},\quad 3W_6= \Span\{x^4y\otimes e_1,x^3y^2\otimes e_2,y^5\otimes e_1\}.
\end{split}}
\end{equation}
We exclude $2W_0$, $3W_2$ and $3W_6$ because the vectors in each case have common factors. 
A plane in $4W_4$ is spanned by the two rows of the following matrix
\begin{equation}\label{two plane in C4 deg 5}{\small
g=\begin{pmatrix}
a_1 & a_2 & a_3 & a_4 \\
b_1 & b_2 & b_3 & b_4
\end{pmatrix}\begin{pmatrix}
x^5 & 0 & xy^4 & 0\\
0 & x^4y & 0 & y^5
\end{pmatrix}^T.}
\end{equation}
Consider the Pl\"ucker coordinates $p_{ij}=a_ib_j-a_jb_i,~1\leq i<j\leq 4$. 
Similar to the discussion for \eqref{two plane in C2 deg 3}, we conclude that $(a_2,b_2)\neq (0,0)\neq (a_3,b_3)$. Hence by Proposition \ref{Critical Prop Free lines} 
we obtain that {\small$\mathcal{F}(0)=\mathcal{F}(\infty)=1$}. From $\deg \mathcal{F}=6$, by scaling, we may assume {\small$\mathcal{F}=(0)+(\infty)+\varphi^{-1}(1)$}. Moreover, by multiplying an $SL_2$ matrix, we may also assume
{\scriptsize$\begin{pmatrix}
a_1 & a_3 \\
b_1 & b_3
\end{pmatrix}=\Id_2$}.

It is straightforward to verify that the orbit {\small$\varphi^{-1}(1)$} is mapped to points on the free lines ${\mathcal L}_{j},~3\leq j\leq 6$. A direct computation shows that ${\mathrm w}(\sigma(p))=i\,{\mathrm w}(p)$ (${\mathrm w}$ given in \eqref{26}). This implies that up to a M\"obius transformation on $\mathbb{P}^1$, we may assume $g(1)\in \mathcal{L}_3$. So, the perpendicular conclusion follows from \eqref{eq-L3}.  

The sextic curve $F$ downstairs is computed by substituting $(x,y)=(\sqrt[4]{z},1)$ into {\scriptsize$g\cdot uv(u^4-v^4)/xy(x^4-y^4)$}. Note that either $\supp\mathcal{Q}=\supp\mathcal{F}$ but $\mathcal{Q}\neq \mathcal{F}$, or there exists $p\in \supp\mathcal{Q}\setminus\supp\mathcal{F}$ such that $\mult_p(\varphi)=1$. Therefore, by Theorem~\ref{thm}, we conclude that $F$ always belongs to the generally ramified family.

\textbf{Case (2)}:  Assume that $\deg g=6$. 
Through $\phi_1$, $V_6$ is decomposed into
{\scriptsize\[V_6=\Span\{x^3y^3\}\oplus\Span\{x^6,x^2y^4\}\oplus\Span\{x^5y,xy^5\}\oplus\Span\{x^4y^2,y^6\}, \]}
\!\!and, accordingly, the character is decomposed as {\small$\rchi_{V_6}=\rchi_0+2\rchi_2+2\rchi_4+2\rchi_6$}. It follows from \eqref{decompse of C2 in deg 5 c4} that {\small$\rchi_{V_6\otimes \mathbb{C}^2}=\rchi_{V_6}\cdot \rchi_{\mathbb{C}^2}=3\rchi_1+4\rchi_3+4\rchi_5+3\rchi_7,$} and {\small$V_6\otimes \mathbb{C}^2$} is decomposed into
\begin{equation}{\scriptsize
\begin{split}
3W_1=& \Span\{x^6\otimes e_2,x^3y^3\otimes e_1,x^2y^4\otimes e_2\},\quad 4W_3=\Span\{x^6\otimes e_1,x^5y\otimes e_2,x^2y^4\otimes e_1,xy^5\otimes e_2\},\\
4W_5=& \Span\{x^5y\otimes e_1,x^4y^2\otimes e_2,xy^5\otimes e_1,y^6\otimes e_2\},\quad 3W_7= \Span\{x^4y^2\otimes e_1,x^3y^3\otimes e_2,y^6\otimes e_1\}.
\end{split}}
\end{equation}
Because $x^6,y^6$ does not appear simultaneously in any one of the above four subspaces, the case $\deg g=6$ does not occur.
\end{proof}

\begin{remark}\label{choose of -T11 in C4}
If we choose $r$ corresponding to $-T_{1,1}$ on the right, then {\small$\rchi_{\mathbb{C}^2}=\rchi_3+\rchi_5=\rchi_4(\rchi_1+\rchi_7)$} {\rm (}compare \eqref{decompse of C2 in deg 5 c4}{\rm )}. Thus {\small$\rchi_{V_5}\cdot \rchi_{\mathbb{C}^2}=\rchi_{V_5}\cdot (\rchi_1+\rchi_7)\cdot \rchi_4$}, and the direct sum decomposition \eqref{direct sum decomposition C4} still holds, although the subscripts may change so that we derive the same formula \eqref{two plane in C4 deg 5}. 
In the following, we will often encounter this phenomenon, in which case a discussion on a single choice suffices.
\end{remark}

\section{Dihedral $D_2$ of order $4$}\label{sec-D2}

In \eqref{diagram3}, assume that $\deg \varphi=4$ and $G=D_2$. Recall that the invariants of $D_2$ are generated by the three invariants $\alpha_2,\beta_2,
\gamma$ with the relation given in Table 1. 
Up to M\"obius transformations, by Table \ref{Rational Galois Coverings}, $\varphi:\mathbb{P}^1\rightarrow \mathbb{P}^1$ is given by
 $$\varphi:w\longmapsto z=-\alpha_2^2/\gamma^2=-(w^2-2+\frac{1}{w^2})/4.$$ The associated Galois group of $\varphi$ consists of three elements generated by 
{\small\begin{equation}\label{D2GaloisGroup}
\sigma: \mathbb{P}^1\rightarrow \mathbb{P}^1,\quad\quad w\mapsto -w;\quad \tau: \mathbb{P}^1\rightarrow \mathbb{P}^1,\quad\quad w\mapsto 1/w.
\end{equation}}
\!\!The ramified points of $\varphi$ are 
$(0,\infty),~(1,-1),~(i,-i),$ in pairs,
with multiplicities $2$. They are the zeros of $\gamma,~\alpha_2,~\beta_2$, respectively, and are assigned to $\infty,~0$ and $1$, respectively under $\varphi$. Moreover, $0$ and $\infty$ are fixed under $\sigma$, $1$ and $-1$ are fixed under $\tau$, while $i$, and $-i$ are fixed under $\tau\circ \sigma$. Up to M\"obius transformations acting on both copies of $\mathbb{P}^1$, the three pairs can be arbitrarily permuted without affecting the map 
$\varphi$. 

On the left-hand side of \eqref{eigenspace problem}, in correspondence to \eqref{D2GaloisGroup}, we select $A=T_{1,2}$ and $B=T_{2,2}$ as double coverings of 
$\sigma$ and $\tau$, respectively.
They generate a subgroup isomorphic to the quaternion group,
{\small\begin{equation}\label{gp}
G^\ast\triangleq \{\pm \Id,\pm T_{1,2},\pm T_{2,2},\pm T_{2,0}\}\subset S_4^*.
\end{equation}}
It has $5$ conjugacy classes $\{\Id\},\{-\Id\},\{\pm T_{1,2}\},\{\pm T_{2,2}\},\{\pm T_{2,0}\}$; hence $G^\ast$ has $5$ irreducible representations.
\vskip -0.25cm
{\scriptsize\begin{table}[H]
\begin{center}
\begin{tabular}{|c|c|c|c|c|c|c|}
\hline
         &  $\Id$ & $-\Id$ & $\{\pm T_{1,2}\}$ & $\{\pm T_{2,2}\}$ & $\{\pm T_{2,0}\}$ \\ \hline
$\psi_1$ &  $1$   & $1$    & $1$               & $1$               & $1$               \\ \hline
$\psi_2$ &  $1$   & $1$    & $1$               & $-1$               & $-1$               \\ \hline
$\psi_3$ &  $1$   & $1$    & $-1$               & $1$               & $-1$               \\ \hline
$\psi_4$ &  $1$   & $1$    & $-1$               & $-1$               & $1$               \\ \hline
$\psi_5$     & $2$   & $-2$    & $0$               & $0$               & $0$               \\ \hline
\end{tabular}
\end{center}
\end{table}}
The first four irreducible representations of degree $1$ are obtained by the double covering $G^\ast\rightarrow G^\ast/\{\pm \Id\}=D_2$. The last representation is obtained by multiplications of $G^\ast$ on the column vectors of $\mathbb{C}^2$ (also equals $\rchi_{V_1}$). Immediately, we obtain that 
\begin{equation}\label{multiplication table for D2 char}{\scriptsize
\psi_i\psi_5=\psi_5,~~1\leq i\leq 4;~~~\psi_5^2=\psi_1+\psi_2+\psi_3+\psi_4,}
\end{equation}
where $\mathbb{C}^2\otimes \mathbb{C}^2$ are decomposed into four irreducible subspaces of dimension $1$
\begin{equation}
\label{key decomposition for D2}{\scriptsize
\begin{split}
W_1&=\Span\{e_1\otimes e_2-e_2\otimes e_1\},\quad W_2=\Span\{e_1\otimes e_2+e_2\otimes e_1\}\\
W_3&=\Span\{e_1\otimes e_1-e_2\otimes e_2\},\quad W_4=\Span\{e_1\otimes e_1+e_2\otimes e_2\}
\end{split}}
\end{equation}
with characters $\psi_i,~1\leq i\leq 4$, respectively

To determine $\phi_2$ on the right-hand side of \eqref{eigenspace problem}, observe that up to conjugation, there are two types of  $D_2$ in $S_4$, namely, 
\begin{small}$$
H_1\triangleq\{\Id,(12)(34),(13)(24),(14)(23)\},\quad\quad H_2\triangleq\{\Id,(13),(24),(13)(24)\}.
$$\end{small}
 Note that $H_1$, normal in $S_4$, is double covered by the group $G^\ast$ in \eqref{gp}, and $H_2$, not normal in $S_4$, is double covered by the group \begin{small}$$G_2\triangleq\{\pm \Id, \pm T_{1,2},\pm T_{2,3},\pm T_{2,1}\}\subset S_4^*.$$\end{small}  Both $G^\ast$ and $G_2$, isomorphic to each other, have a half-turn around the axis through the opposite vertices, so that one of the three pairs of ramified points, say, $\{g(0),g(\infty)\}$,  lies on the free lines. The geometry of $G^\ast$ and $G_2$ differ by the two lemmas below.

{
\begin{lemma}\label{first type D2}
Under the assumption that $\{g(0),g(\infty)\}$ lies on the free lines, up to conjugation {\rm (}by elements in $S_4^*${\rm )}, there is a unique isomorphism $\Gamma:G^\ast\rightarrow G^\ast$ given by the identity map. Moreover, all the three pairs of ramified points lie on the free lines, and we may assume that {\small$\mathcal{Q}(0)\geq \mathcal{Q}(1)\geq \mathcal{Q}(i)$}.
\end{lemma}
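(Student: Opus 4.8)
The plan is to prove the three assertions of the lemma separately: (i) that every admissible isomorphism $\Gamma\colon G^\ast\to G^\ast$ is $S_4^\ast$-conjugate to the identity; (ii) that, for $\Gamma=\mathrm{id}$, all three pairs of ramified points are carried to free lines; and (iii) that the intersection orders can be sorted by the residual symmetry. The structural fact I would exploit is that $G^\ast\cong Q_8$ is \emph{normal} in $S_4^\ast$, being the full preimage of the normal subgroup $H_1$ of $S_4$; in particular the image of any first-type $\phi_2$ is forced to be $G^\ast$ itself, so $\Gamma$ is a genuine automorphism of $G^\ast$. For (i), normality makes conjugation a homomorphism $c\colon S_4^\ast\to\Aut(G^\ast)$. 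I would first compute $\ker c$, the centralizer of $G^\ast$: anything commuting with $T_{1,2}=\diag(i,-i)$ must be diagonal, and among the diagonal elements of $S_4^\ast$ only $\pm\Id$ also commute with the anti-diagonal $T_{2,2}$; hence $\ker c=\{\pm\Id\}$. Therefore $|\im c|=48/2=24=|\Aut(Q_8)|$, so $c$ is onto. Consequently any automorphism $\Gamma$ equals $\mathrm{conj}_s$ for some $s\in S_4^\ast$, and replacing $\phi_2=\Gamma$ by $s^{-1}\,\phi_2(\cdot)\,s=\mathrm{id}$ is precisely the allowed $PSL_2$-equivalence by the right action $B=s$, leaving the fixed choice $\phi_1=\mathrm{id}$ intact. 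This gives uniqueness up to conjugation with the identity as representative.

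For (ii) I would set $\phi_2=\mathrm{id}$ and read off the deck relation $g(\sigma(p))\,\phi_2(T_{1,2})^{T}=\lambda\,g(p)$ of \eqref{eigenspace problem} at each ramification pair. Since $0,\infty$ are fixed by $\sigma$ and $\phi_2(T_{1,2})=T_{1,2}$, the matrix $g(0)$ satisfies $g(0)\,T_{1,2}^{T}=\lambda\,g(0)$; writing $g(0)=\begin{psmallmatrix}a&b\\ c&d\end{psmallmatrix}$ and using $T_{1,2}^{T}=\diag(i,-i)$ forces $b=d=0$ (so $\mathrm{w}=0$) or $a=c=0$ (so $\mathrm{w}=\infty$), i.e.\ $g(0)\in\mathcal{L}_1\cup\mathcal{L}_2$. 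The identical test for the $\tau$-fixed pair $\{1,-1\}$ with $T_{2,2}^{T}$ returns $\mathrm{w}=\pm1$, and for the $\tau\sigma$-fixed pair $\{i,-i\}$ with $T_{2,0}^{T}$ returns $\mathrm{w}=\pm i$; in every case the image sits on one of the six free lines \eqref{free}. This both proves that all three pairs lie on free lines and records the vertex assignment of each.

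For (iii), I would first note that $\mathcal{Q}$ is $G$-invariant, because $Q_2$ is preserved by the right $S_4^\ast$-action (determinants are unchanged) and $g\circ\sigma=g\cdot\phi_2(\cdot)^{T}$; since $\tau$ swaps $0\leftrightarrow\infty$ while $\sigma$ swaps $1\leftrightarrow-1$ and $i\leftrightarrow-i$, the orders agree within each pair, so $\mathcal{Q}$ has a well-defined value on each of the three axes. These three axes are permuted arbitrarily by $S_4/H_1\cong S_3$, realized simultaneously by conjugating $\phi_2$ on the right and applying a matching M\"obius transformation of $M$ that preserves $\varphi$ (the permutation freedom noted after \eqref{D2GaloisGroup}); choosing the permutation that orders the three values decreasingly gives $\mathcal{Q}(0)\ge\mathcal{Q}(1)\ge\mathcal{Q}(i)$.

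The cleanest part is (i), a short finite-group computation, so I expect the main obstacle to lie in making (iii) rigorous: one must verify that the $S_3$ relabeling of the three axes is induced by an honest element of $S_4^\ast$ normalizing $G^\ast$, so that the simultaneous left-M\"obius and right-$S_4^\ast$ adjustment is legitimate and does not secretly reintroduce a nontrivial $\Gamma$ or alter $\varphi$. Verifying this compatibility, rather than any individual calculation, is the crux.
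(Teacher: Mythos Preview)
Your proposal is correct and follows essentially the same approach as the paper, just with more explicit detail: for (i) the paper invokes $\mathrm{Out}(Q_8)\cong S_3\cong S_4^\ast/G^\ast$ in place of your centralizer-and-counting argument, and for (ii) it appeals directly to the geometric correspondence of Proposition~\ref{correspondence} (all three order-$4$ conjugacy classes of $G^\ast$ are vertex half-turns) rather than your matrix eigen-equations. Your caution about the compatibility in (iii) is well taken, and the resolution you sketch---that the $S_3$ permuting the three axes is realized by elements of $S_4^\ast$ normalizing $G^\ast$, with any induced change in $\Gamma$ absorbed by part (i)---is exactly right.
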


\begin{proof} The outer automorphism group of the quaternion group is isomorphic to $S_3$ obtained through $G^\ast$ as follows. Since $G^\ast$ is normal in $S_4^*$, the conjugation $\Gamma: k \mapsto gkg^{-1},\forall k\in G^\ast,$ by a $g\in S_4^*$ gives rise to an outer automorphism of $G^\ast$ when $g$ runs through the factor group $S_4^*/G^\ast\simeq S_3$. 
Now consider $\tilde{\Gamma}\triangleq g^{-1}\Gamma g$. 
Geometrically, this means that after a $SL_2$-coordinate change of ${\mathbb C}^2$ in $V_s\otimes {\mathbb C}^2$ by $g\in S_4^*$, we may assume that $\Gamma$ is the identity.

Because $\pm T_{1,2},\pm T_{2,2}$ and $\pm T_{2,0}$ are all half-turns around the axis through the vertices of opposite sides, all the three pairs of ramified points lie in the free lines. Since we can permute them in any order by M\"obius transformations, we may assume that {\small$\mathcal{Q}(0)\geq \mathcal{Q}(1)\geq \mathcal{Q}(i)$}. 
\end{proof}

\begin{lemma}\label{second type D2}
Under the assumption that $\{g(0),g(\infty)\}$ lies on the free lines, up to conjugation {\rm (}by elements in $S_4^*${\rm )}, there is a unique isomorphism $\theta:G^\ast\rightarrow G_2$ determined by $\theta(T_{1,2})=T_{1,2}$ and $\theta(T_{2,2})=T_{2,1}$. Moreover, the pairs $\{g(1),g(-1)\}$ and $\{g(i),g(-i)\}$ do not lie in the free lines. Meanwhile, the character of $\theta:G^\ast\rightarrow G_2\subseteq SU(2)$ is the above $\psi_5$.
\end{lemma}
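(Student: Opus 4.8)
The plan is to mirror the proof of Lemma \ref{first type D2}, adapting it from the normal copy $H_1$ to the non-normal $H_2$ and its quaternionic lift $G_2$. First I would settle \emph{existence} of $\theta$ by checking directly that the assignment $\theta(T_{1,2})=T_{1,2},\ \theta(T_{2,2})=T_{2,1}$ respects the defining relations $T_{1,2}^2=T_{2,2}^2=(T_{1,2}T_{2,2})^2=-\Id$ of the quaternion group $G^\ast$. Here $T_{1,2}^2=T_{2,1}^2=-\Id$ is immediate, and a one-line matrix computation gives $T_{1,2}T_{2,1}=T_{2,3}$ with $(T_{1,2}T_{2,1})^2=-\Id$; since in $G^\ast$ we have $T_{1,2}T_{2,2}=-T_{2,0}$ with $(-T_{2,0})^2=-\Id$, the relations match and $\theta$ extends to a homomorphism. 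As its image contains the generators $T_{1,2},T_{2,1}$ of $G_2$ and $|G^\ast|=|G_2|$, the map $\theta$ is an isomorphism; moreover $\theta(T_{2,0})=-\theta(T_{1,2})\theta(T_{2,2})=-T_{1,2}T_{2,1}=-T_{2,3}$.

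For \emph{uniqueness up to conjugation} I would exploit the free-line dictionary. The hypothesis that $\{g(0),g(\infty)\}$ lies on the free lines forces $\phi_2(T_{1,2})$ to fix a free line, hence to be a vertex-axis rotation, i.e. to lie in the order-$4$ conjugacy class $\{\pm T_{1,2},\pm T_{2,2},\pm T_{2,0}\}$ rather than the edge-axis class containing $T_{2,1}$. Among the order-$4$ elements of $G_2$, only $\pm T_{1,2}$ belong to this vertex class, the others $\pm T_{2,1},\pm T_{2,3}$ being edge-axis rotations. Conjugating by $T_{2,1}\in G_2$, which sends $T_{1,2}\mapsto -T_{1,2}$, I may normalize $\theta(T_{1,2})=T_{1,2}$. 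Then $\theta(T_{2,2})\in\{\pm T_{2,1},\pm T_{2,3}\}$, and I would show these four images are identified: conjugation by $T_{1,2}$ flips signs ($T_{2,1}\mapsto -T_{2,1}$), while a direct check shows $T_{1,1}\in N_{S_4^*}(G_2)$ fixes $T_{1,2}$ and sends $T_{2,1}\mapsto T_{2,3}$, so conjugation by the subgroup $\langle T_{1,2},T_{1,1}\rangle$ of the normalizer acts transitively on $\{\pm T_{2,1},\pm T_{2,3}\}$ while fixing $T_{1,2}$. This reduces every admissible isomorphism to the stated one, giving uniqueness.

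The geometric assertion then drops out: $\theta(\tau)=\theta(T_{2,2})=T_{2,1}$ and $\theta(\tau\sigma)=\theta(T_{2,0})=-T_{2,3}$ both lie in the edge-axis class, whose eigen-rulings correspond under the $26$-center correspondence to edge centers, not vertices. Since $g(1),g(-1)$ (resp. $g(i),g(-i)$) must lie on the eigen-rulings of $\phi_2(\tau)$ (resp. $\phi_2(\tau\sigma)$), neither pair can lie on a free line. Finally, for the character claim I would read off traces through $\theta$: $\Id\mapsto\Id$ and $-\Id\mapsto-\Id$ contribute $\pm2$, while each nontrivial class representative maps to a trace-zero matrix ($\theta(T_{1,2})=\diag(i,-i)$ and the anti-diagonal $\theta(T_{2,2})=T_{2,1}$, $\theta(T_{2,0})=-T_{2,3}$). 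This is exactly the row $(2,-2,0,0,0)$ of $\psi_5$.

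The main obstacle will be the uniqueness step, precisely because $G_2$ is \emph{not} normal in $S_4^*$. For the normal group $G^\ast$ all of $\Aut(Q_8)$ is induced by $S_4^*$-conjugation (as used in Lemma \ref{first type D2}), whereas $G_2$ has normalizer of order only $16$, inducing only a subgroup of order $8$ in $\Aut(Q_8)$. I must therefore verify by explicit matrix computation that the particular normalizing elements $T_{1,2}$ and $T_{1,1}$ already generate enough automorphisms to act transitively on the admissible images $\{\pm T_{2,1},\pm T_{2,3}\}$ of $T_{2,2}$ while fixing $T_{1,2}$; this transitivity is what makes $\theta$ genuinely unique and is the crux of the argument.
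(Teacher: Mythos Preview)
Your proof is correct and follows essentially the same approach as the paper: both arguments first use the free-line constraint to force $\theta(T_{1,2})\in\{\pm T_{1,2}\}$, normalize the sign by an inner conjugation in $G_2$, and then use that a power of $T_{1,1}$ normalizes $G_2$ while fixing $T_{1,2}$ to make the four possible images $\{\pm T_{2,1},\pm T_{2,3}\}$ of $T_{2,2}$ equivalent. The paper packages this via quaternion notation $({\bf I},{\bf J},{\bf K})\mapsto({\bf i},{\bf j},{\bf k})$ and uses $x=T_{1,1}^3$ where you use $T_{1,1}$, but the content is identical; your explicit verification of the character and the geometric consequence for $g(\pm1),g(\pm i)$ is likewise correct.
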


\begin{proof}  Unlike $G^\ast$, $G_2$ is not normal in $S_4^*$. Because $\pm T_{1,2}$ are half-turns around the axis through the opposite vertices, and $\pm T_{2,2}$ and $\pm T_{2,0}$ are half-turns around the axis through the midpoints of opposite edges, while we assume that $\{g(0),g(\infty)\}$ lie on the free lines, it follows that there are only two choices of $\theta(T_{1,2})$, namely, $\theta(T_{1,2})=\pm T_{1,2}$. 

Adopting the quaternion notation, let ${\bf I}\triangleq T_{2,0}$ and ${\bf J}\triangleq T_{2,2}$. We have ${\bf K}=T_{1,2}={\bf I}{\bf J}$. Likewise, let ${\bf i}\triangleq T_{2,1}$, ${\bf j}\triangleq T_{2,3}$, and ${\bf k}\triangleq T_{1,2}$; we have ${\bf i}{\bf j}={\bf k}$ while ${\bf K}={\bf k}$, so that $\theta({\bf K})=\pm {\bf k}$ to obey the above geometric constraint. 
 
 We may assume $\theta({\bf K})={\bf k}$; otherwise, if $\theta({\bf K})=-{\bf k}$, then $\widetilde{\theta}\triangleq{\bf i}\theta {\bf i}^{-1}:G^\ast\rightarrow G_2$ maps ${\bf K}$ to ${\bf k}$. Consequently, $\theta({\bf I})\theta({\bf J})={\bf k}$, so that by quaternion multiplication, either $\theta({\bf I})=\pm {\bf i}$ and $\theta({\bf J})=\pm {\bf j}$, or $\theta({\bf I})=\pm {\bf j}$ and $\theta({\bf J})=\mp {\bf i}.$ In the former case, if $\theta({\bf I})=-{\bf i},$ $\theta({\bf J})=-{\bf j}$, then ${\bf k}\theta {\bf k}^{-1}$ sends ${\bf I}$ to ${\bf i}$ and ${\bf J}$ to ${\bf j}$.  In the latter case, if $\theta({\bf I})={\bf j}$ and $\theta({\bf J})=-{\bf i}$, then ${\bf k}\theta {\bf k}^{-1}$ sends ${\bf I}$ to $-{\bf j}$ and ${\bf J}$ to ${\bf i}$. In any event, we may assume that either $\theta$ maps ${\bf I},{\bf J},{\bf K}$ to ${\bf i}, {\bf j},{\bf k}$, respectively, or sends ${\bf I},{\bf J},{\bf K}$ to $- {\bf j}, {\bf i}, {\bf k}$, respectively.
 
 However, in the former case, the conjugation $\sigma^x: g\mapsto xgx^{-1},g\in G_2,x\triangleq T_{1,1}^3,$ fixes $G_2$ in $S_4^*$, fixes ${\bf K}$ by commutativity, and sends ${\bf i}$ to $-{\bf j}$ and ${\bf j}$ to ${\bf i}$. so that $\sigma^x\circ \theta=x\theta x^{-1}$ sends ${\bf I}$ to $-{\bf j}$ and ${\bf J}$ to ${\bf i}$, 
which is what we are after. 

\end{proof}

\begin{theorem}
In the case of $D_2$, let $g([x,y]):\mathbb{P}^1\rightarrow \mathbb{P}^3$ be a holomorphic curve satisfying diagram \eqref{diagram3}. Then $\deg g=5$, and up to M\"obius transformations on $\mathbb{P}^1$, 
$g$ is equivalent to 
\begin{enumerate}
\item[\rm{(1)}] 
{\small\begin{equation}\label{g d2 case2}
g=\begin{pmatrix}
a_1 & 1 & 0\\
b_1 & 0 & 1
\end{pmatrix}\begin{pmatrix}
x\alpha_2^2 & x\gamma^2 & -x\alpha_2\beta_2 \\
y\alpha_2^2 & y\gamma^2 & y\alpha_2\beta_2
\end{pmatrix}^T,
\end{equation}}
which depends on two freedoms $a_1,~b_1$ and belongs to the generally ramified family, or

\item[\rm{(2)}] 
{\small\begin{equation}\label{g d2 case3}
g=\begin{pmatrix}
a_1 & 1 & 0\\
b_1 & 0 & 1
\end{pmatrix}\begin{pmatrix}
x\alpha_2^2 & x\gamma^2 & -x\alpha_2\beta_2 \\
\xi_1y\alpha_2^2 & \xi_1y\gamma^2 & \xi_1y\alpha_2\beta_2
\end{pmatrix}^T,
\end{equation}}
where $\xi_1=\exp(\frac{\pi i}{4})$ and {\scriptsize$ \begin{pmatrix}
a_1 & 1 & 0\\
b_1 & 0 & 1
\end{pmatrix}\in G(2,3)$} 
is a unique $2$-plane that is perpendicular to {\scriptsize${((t^2-1)^2(\xi_1-t),4(\xi_1-t)t^2,(t^4-1)(\xi_1+t))}$} under the usual bilinear inner product. This class depends on one freedom $t$ and belongs to the exceptional transversal family.

\end{enumerate}

\end{theorem}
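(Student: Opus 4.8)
The plan is to pin down $\deg g$ by representation theory, reduce the construction to finding $2$-planes in an explicit isotypic component, and then split into the two families according to the two inequivalent copies of $D_2$ inside $S_4$ isolated in Lemmas~\ref{first type D2} and \ref{second type D2}.

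\emph{Degree and decomposition.} By Corollary~\ref{Cor4.1} we have $4\le\deg g\le 6$. In either admissible choice of $\phi_2$ the factor $\mathbb{C}^2$ carries the $2$-dimensional irreducible character $\psi_5$, so by the multiplication rules \eqref{multiplication table for D2 char} a $1$-dimensional summand can arise in $V_n\otimes\mathbb{C}^2$ only from the $\psi_5$-isotypic part of $V_n$. From \eqref{formula to compute char on Vn} one gets $\chi_{V_n}(-A)=(-1)^n\chi_{V_n}(A)$, and since $\psi_5(-A)=-\psi_5(A)$ this yields $(\chi_{V_n}\mid\psi_5)=0$ for even $n$; hence $n$ is odd and $\deg g=5$. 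A character count then gives $V_5=3\psi_5$, whence, by \eqref{multiplication table for D2 char},
\[
V_5\otimes\mathbb{C}^2=3\psi_5\otimes\psi_5=3\psi_1\oplus 3\psi_2\oplus 3\psi_3\oplus 3\psi_4 .
\]

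\emph{Explicit basis and the two cases.} The outer $S_3$ permuting $\psi_2,\psi_3,\psi_4$ together with the $PSL_2$-equivalence lets us treat only the trivial summand $3\psi_1$. Proposition~\ref{key basis prop} supplies its basis: $x\otimes e_1+y\otimes e_2$ has character $\psi_1$, while $-x\otimes e_1+y\otimes e_2$ (taking $f=\gamma$ in part~(1)) has character $\chi_\gamma=\psi_2$; multiplying by the degree-$4$ invariants $\alpha_2^2,\gamma^2$ (character $\psi_1$) and $\alpha_2\beta_2$ (character $\psi_2$, so that $\psi_2^2=\psi_1$) produces
\[
v_1=\alpha_2^2\,(x\otimes e_1+y\otimes e_2),\quad v_2=\gamma^2\,(x\otimes e_1+y\otimes e_2),\quad v_3=\alpha_2\beta_2\,(-x\otimes e_1+y\otimes e_2),
\]
which are exactly the columns in \eqref{g d2 case2} (the relation $\alpha_2^2-\beta_2^2+\gamma^2=0$ forces the third generator to use the $\psi_2$-twist). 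The two cases of the theorem now correspond to the two embeddings $\phi_2$: Lemma~\ref{first type D2} ($\phi_2=\mathrm{id}$ onto $G^\ast$) reproduces \eqref{g d2 case2}, whereas Lemma~\ref{second type D2} ($\phi_2=\theta$ onto $G_2$, with $\theta(T_{2,2})=T_{2,1}$) feeds the factor $\xi_1=e^{\pi i/4}$ from $T_{2,1}$ into the $\mathbb{C}^2$-eigenvectors and produces \eqref{g d2 case3}.

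\emph{Separating the two families.} In each case I parametrize the $2$-plane by $G(2,3)$ and normalize by a left $SL_2$-action to the form $\left(\begin{smallmatrix}a_1&1&0\\ b_1&0&1\end{smallmatrix}\right)$ after the Pl\"ucker/common-factor argument of \eqref{two plane in C2 deg 3} (no entry of $g$ may vanish, lest $F$ fail to be linearly full). For Case~(1) one finds $\det g=2\gamma\,\alpha_2\beta_2\,(a_1\alpha_2^2+\gamma^2)$; Lemma~\ref{first type D2} places all three ramified pairs on free lines, while by Lemma~\ref{linear system of D2} the quartic factor $a_1\alpha_2^2+\gamma^2$ cuts out a principal orbit lying off the $26$ centers of \eqref{26}. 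Such a point of $\supp\mathcal Q\setminus\supp\mathcal F$ has $\mult_p(\varphi)=1$, so Theorem~\ref{thm}(1) shows $F$ is generally ramified, with $a_1,b_1$ free (a $2$-parameter family). For Case~(2), Lemma~\ref{second type D2} puts only $\{g(0),g(\infty)\}$ on free lines; since $\deg\mathcal F=6$ this forces an entire principal orbit onto the free lines $\mathcal L_3,\dots,\mathcal L_6$, and imposing the vanishing condition of \eqref{eq-L3} at such a point—evaluating $v_1,v_2,v_3$—produces exactly the stated perpendicularity, which cuts the family down to the single parameter $t$. One then checks that both conditions of Theorem~\ref{thm} fail (every point of $\supp\mathcal Q\setminus\supp\mathcal F$ sits at an edge or face center with $\ord_p\mathcal Q=1$, and $\mathcal F=\mathcal Q$ on the free-line locus), so $F$ is exceptional transversal; the coincidence case $t^2=1$ is treated as a limit via Proposition~\ref{Critical Prop Free lines}, and degenerate configurations are ruled out by showing the resulting $F$ would lie in a $\mathbb{P}^5$.

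\emph{Main obstacle.} The character bookkeeping is routine; the delicate point is Case~(2). One must track the $\xi_1$-twist accurately through the $\phi_2=\theta$ eigenvector computation, determine via the induced action on $\mathrm w$ in \eqref{eq-mathrmw} and \eqref{26} precisely which free lines the forced principal orbit can meet, and verify that a single perpendicularity survives so the dimension drops to $1$ while no spurious hyperplane-degenerate curves remain. Cleanly separating genuine exceptional-transversal members from the configurations that collapse $F$ into a $\mathbb{P}^5$ is where the real work lies.
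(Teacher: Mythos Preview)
Your plan is the paper's plan: constrain $\deg g$ to $5$ by parity, decompose $V_5\otimes\mathbb C^2$ via the invariants $\alpha_2,\beta_2,\gamma$ and Proposition~\ref{key basis prop}, then split according to the two embeddings of Lemmas~\ref{first type D2}--\ref{second type D2}. Your basis $v_1,v_2,v_3$ is exactly \eqref{direct sum decomposition D2 deg 5 first kind}, and the $\xi_1$-twist for $\phi_2=\theta$ is the right mechanism.

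One concrete correction in Case~(2). You assert that the forced principal orbit lands on $\mathcal L_3,\dots,\mathcal L_6$; this is not automatic. Under right multiplication by $G_2$ the six free lines break into the orbits $\{\mathcal L_1,\mathcal L_2\}$ and $\{\mathcal L_3,\mathcal L_4,\mathcal L_5,\mathcal L_6\}$ (the permutation from $T_{2,1}^T$ is $(12)(3465)$), so after using the $D_2$-action on $\varphi^{-1}(\varphi(t))$ you can only reduce to $g(t)\in\mathcal L_2$ \emph{or} $g(t)\in\mathcal L_3$. Both must be analyzed: the $\mathcal L_3$ branch gives \eqref{eq-L3} and hence the stated perpendicularity, while the $\mathcal L_2$ branch (vanishing of the first column, \eqref{eq-L12}) must be eliminated by the $F\subset\mathbb P^5$ computation you allude to. Also, the non-generic degeneration here is $t\to 0$ (giving $\mathcal F=3(\gamma)$), not $t^2=1$; it is the limit of the $\mathcal L_2$ branch and is discarded for the same reason. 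Finally, in Case~(1) you should justify $p_{23}\ne 0$ before normalizing: the paper does this by invoking the permutation freedom of Lemma~\ref{first type D2} to arrange $\mathcal Q(0)\ge\mathcal Q(1)\ge\mathcal Q(i)$.
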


\begin{proof}
It follows from  Corollary \ref{Cor4.1} that $4\leq \deg(g)\leq 6$. By Proposition \ref{poincareEven} and the Clebsch-Gordan formula {\small$V_{2k}\otimes \mathbb{C}^2\cong V_{2k}\otimes V_1\cong V_{2k+1}\oplus V_{2k-1}$} (see \eqref{transvectant}), we rule out the cases when $\deg g=4$ and $6$. Therefore, we assume that $\deg g=5$ in the following so that $\deg \mathcal{F}=6$.

\textbf{Case (1)}: On the right-hand side of \eqref{eigenspace problem}, we choose $\phi_2$ to be the identity map   $\Id:G^\ast\rightarrow G^\ast$ as stated in Lemma \ref{first type D2}. Then all the three pairs of ramified points lie on the free lines, so that {\small$\mathcal{F}=(\alpha_2)+(\beta_2)+(\gamma)$} and we may assume that {\small$\mathcal{Q}(0)\geq \mathcal{Q}(1)\geq \mathcal{Q}(i)$}.

Note that $\{\alpha_2^2, \alpha_2\beta_2, \gamma^2\}$ are three linearly independent invariants in $V_4$. By using item (2) of Proposition \ref{key basis prop}, we obtain the following decomposition, 
\begin{small}\begin{equation}\label{V5 D2}{
V_5=\Span\{y\alpha_2^2,-x\alpha_2^2\}\oplus\Span\{y\gamma^2,-x\gamma^2\}\oplus \Span\{y\alpha_2\beta_2,x\alpha_2\beta\}, ~~~~\quad \rchi_{V_5}=3\psi_5.}
\end{equation}\end{small}
It follows from \eqref{key decomposition for D2} and \eqref{V5 D2} that 
{\small$\rchi_{V_5\otimes \mathbb{C}^2}=3 \psi_5^2=3\psi_1+3\psi_2+3\psi_3+3\psi_4,$} and {\small $V_5\otimes \mathbb{C}^2$} is decomposed into the following four subspaces 
\begin{equation}\label{direct sum decomposition D2 deg 5 first kind}{\scriptsize
\begin{split}
3W_1=& \Span\{x\alpha_2^2\otimes e_1+y\alpha_2^2\otimes e_2,x\gamma^2\otimes e_1+y\gamma^2\otimes e_2,-x\alpha_2\beta_2\otimes e_1+y\alpha_2\beta_2\otimes e_2\},\\
3W_2=& \Span\{-x\alpha_2^2\otimes e_1+y\alpha_2^2\otimes e_2,-x\gamma^2\otimes e_1+y\gamma^2\otimes e_2,x\alpha_2\beta_2\otimes e_1+y\alpha_2\beta_2\otimes e_2\},\\
3W_3=& \Span\{y\alpha_2^2\otimes e_1+x\alpha_2^2\otimes e_2, y\gamma^2\otimes e_1+x\gamma^2\otimes e_2, y\alpha_2\beta_2\otimes e_1-x\alpha_2\beta_2\otimes e_2\},\\
3W_4=& \Span\{y\alpha_2^2\otimes e_1-x\alpha_2^2\otimes e_2,y\gamma^2\otimes e_1-x\gamma^2\otimes e_2,y\alpha_2\beta_2\otimes e_1+x\alpha_2\beta_2\otimes e_2\}.
\end{split}}
\end{equation}
A plane in $3W_1$ is spanned by the two rows of the following matrix
\begin{equation}\label{two plane in the eigenspace D2 deg 5 first type}{\small
g=\begin{pmatrix}
a_1 & a_2 & a_3 \\
b_1 & b_2 & b_3 
\end{pmatrix}\begin{pmatrix}
x\alpha_2^2 & x\gamma^2 & -x\alpha_2\beta_2 \\
y\alpha_2^2 & y\gamma^2 & y\alpha_2\beta_2
\end{pmatrix}^T}
\end{equation}
The plane in $\widetilde{g}(x,y)$ in $3W_2$ is equivalent to $g$ by changing $y$ to $-y$, while the plane in $3W_{2+i}$ is equivalent to the plane in $3W_{i}$ by interchanging $x$ and $y$, for $i=1,2$. So, we use \eqref{two plane in the eigenspace D2 deg 5 first type} in the following computation. 

Using Pl\"ucker coordinates $p_{ij}=a_ib_j-a_jb_i,~1\leq i<j\leq 3$, we obtain \begin{small}$$\det g=2\alpha_2\beta_2\gamma(p_{13}\alpha_2^2+p_{23}\gamma^2).$$\end{small} 
Let $t$ be a solution of $p_{13}\alpha_2^2+p_{23}\gamma^2=0$ with 
$p_{13}p_{23}(p_{13}-p_{23})\neq 0$, and let $\varphi^{-1}(\varphi(t))$ denote the divisor consisting of $4$ distinct points of 
the principal orbit 
through $t$. Then by Lemma \ref{linear system of D2} and the assumption that {\small$\mathcal{Q}(0)\geq \mathcal{Q}(1)\geq \mathcal{Q}(i)$}, we obtain that {\small$\mathcal{Q}=(\alpha_2)+(\beta_2)+(\gamma)+\varphi^{-1}(\varphi(t))$} in the generic case, or {\small$\mathcal{Q}=(\alpha_2)+(\beta_2)+3(\gamma)$} in the non-generic case that arises when $p_{13}\rightarrow 0$. 
In both cases, $p_{23}\neq 0$, and so we may assume that $g$ is given as in \eqref{g d2 case2}. 

The sextic curves $F$ downstairs in both cases are computed by {\scriptsize $g\cdot uv(u^4-v^4)/xy(x^4-y^4)$} and invariant theory (the Galoisness of $\varphi$ dictates that the coordinates of $F$ be invariants of $\alpha_2,\beta_2$, and $\gamma$). $F$ is tangent to the $1$-dimensional orbit at $F(\varphi(t))$, or $F(\infty)$, respectively.

\textbf{Case (2)}: Now, on the right-hand side of \eqref{eigenspace problem} we choose $\phi_2$ to be the map $\theta:G^\ast\rightarrow G_2$ as stated in the Lemma \ref{second type D2}. It is equivalent to Case (1) under the new basis $(\widetilde{e}_1,\widetilde{e}_2):=(e_1,\xi_1e_2)$, where $\xi_1=\exp(\frac{\pi i}{4})$, so that {\small$V_5\otimes \mathbb{C}^2$} is decomposed similarly as \eqref{direct sum decomposition D2 deg 5 first kind} by changing $e_2$ to $\xi_1e_2$.

 Similarly, we need only consider the plane in $3W_1$ spanned by the two rows of the following matrix 
\begin{small}$$g=\begin{pmatrix}
a_1 & a_2 & a_3 \\
b_1 & b_2 & b_3 
\end{pmatrix}\begin{pmatrix}
x\alpha_2^2 & x\gamma^2 & -x\alpha_2\beta_2 \\
\xi_1y\alpha_2^2 & \xi_1y\gamma^2 & \xi_1y\alpha_2\beta_2
\end{pmatrix}^T,$$\end{small} in which case, only $g(0),g(\infty)$ lie on the free lines, while $g(1),g(-1),g(i),g(-i)$ do not.

Note that \begin{small}$$\det g=2\xi_1\alpha_2\beta_2\gamma(p_{13}\alpha_2^2+p_{23}\gamma^2).$$\end{small} 
By Lemma \ref{linear system of D2} and the restriction on free divisor $\mathcal{F}$, we obtain that {\small$\mathcal{Q}=(\alpha_2)+(\beta_2)+3(\gamma)$} and {\small$\mathcal{F}=3(\gamma)$} in the non-generic case,  or {\small$\mathcal{Q}=(\alpha_2)+(\beta_2)+(\gamma)+\varphi^{-1}(\varphi(t))$} and {\small$\mathcal{F}=\gamma+\varphi^{-1}(\varphi(t))$} in the genric case, where $\varphi^{-1}(\varphi(t))$ consists of $4$ distinct points constituting the principal orbit of $D_2$ through the point $t$ (a solution of {\small$p_{13}\alpha_2^2+p_{23}\gamma^2=0$}). In both cases, $p_{23}\neq 0$, so that we may assume that $g$ is in the form \eqref{g d2 case3}. For the generic case, right multiplication by {\small$T_{2,1}^T$} (corresponding to interchanging $t$ with other points in $\varphi^{-1}(\varphi(t))$) induces the permutation $(12)(3465)$ on free lines; we may thus assume that $g(t)$ lies on ${\mathcal L}_2$ or ${\mathcal L}_3$. The sextic curve $F$ is computed by {\scriptsize$g\cdot uv(u^4-v^4)/xy(x^2-t^2y^2)(x^2-y^2/t^2)$} and invariant theory.

If $g(t)$ lies on ${\mathcal L}_2$, then we can solve $a_1$ and $b_1$ as rational polynomials of $t$ from the perpendicular condition arising from \eqref{eq-L12}. A direct computation shows that $F$ sits in ${\mathbb P}^5$, to be ruled out. In fact, $F=[f_0:\cdots:f_6]$ satisfies the linear constraint
{\scriptsize\[
(t^2+1)^3f_0-\sqrt{3/2}(t^2+1)^2(t^2-1)\, f_1+\sqrt{3/5}(t^2+1)(t^2-1)^2\,f_2-\sqrt{1/20}(t^2-1)^3f_3=0.\]} 
The non-generic case $\mathcal{Q}=\alpha_2+\beta_2+3\gamma_2$ is ruled out by letting $t\rightarrow 0$ in this case.

If $g(t)$ lies on $\mathcal{L}_3$, then we obtain the perpendicular condition from \eqref{eq-L3}. 
$F$ belongs to the exceptional transversal family by Theorem \ref{thm}. \end{proof}

\section{Digression into Dicyclic Groups}\label{digress} A dicyclic group $Dic_n$ of order $4n, n\geq 3,$ is one generated by two elements $x$ and $y$ with the presentation $x^{2n}=1, y^2=x^n, yxy^{-1}=x^{-1}$. The last two relations give that all elements in $Dic_n$ are of the form $x^k$ or $x^ky, 0\leq k\leq 2n-1$. We have the product rule
\begin{equation}\label{product-rule}{\small
(x^ky)(x^ly)=x^{k-l+n},\quad (x^{l}y)(x^k)=x^{l-k}y,\quad x^kyx^{-k}=x^{2k}y.}
\end{equation}
The first two of which follow from the last one that is in turn an inductive result of rewriting $y^2=x^n$ as $ y^{-1}=x^{-n}y.$ The first identity implies that $(x^ly)^2=x^n$ so that its order is $4$.

For each pair $(a,b)\in  {\mathbb Z}_{2n}^{\times}\times {\mathbb Z}_{2n}$, where ${\mathbb Z}_{2n}^{\times}$ is the multiplicative group modulo $2n$ consisting of all $k\in {\mathbb Z}_{2n}$ prime to $2n$, define the map $\phi:Dic_n\rightarrow Dic_n$ by 
\begin{equation}\label{automorphism}{\small
x^m\mapsto x^{am},\quad x^ky\mapsto x^{ak+b}y.}
\end{equation}
Then it is checked that $\phi$ is an automorphism of $Dic_n$ satisfying $\phi : x\mapsto x^a$ and $y\mapsto x^b y$. Conversely, any automorphism $\phi$ must map $x$ to $x^a$ for some $a$ prime to $2n$, because as mentioned above $x^ly$ has order $4$ whereas $\phi(x)$ has order $2n>4$, while $\phi$ maps $y$ in general to $x^by$ for some $b$ for the same reason. 

We conclude by \eqref{automorphism} that the automorphism group of $Dic_n$ is the affine transformation group consisting of the affine transformations $L_{(a,b)}(t) := at+b, (a,b)\in {\mathbb Z}_{2n}^{\times}\times {\mathbb Z}_{2n}.$ Moreover, all inner automorphisms are those of the form $L_{(\pm 1, 2m)}$, corresponding to the conjugation $h\mapsto ghg^{-1}, h\in G$ for $g=x^m$ or $g=x^my$ associated with the sign $\pm$, respectively. The outer automorphism group is defined by the equivalence relation
\begin{small}$$
L_{(a,b)}\sim L_{(c,d)}\quad \text{if and only if}\quad d = \pm b \; (\text{mod} \;2)\;\text{and}\; c=\pm a.
$$\end{small}

\section{Dihedral $D_3$ of order $6$}

In \eqref{diagram3}, assume that $\deg \varphi=6$ and $G=D_3$. Recall that the invariants of $D_3$ are generated by the three fundamental invariants $\alpha_3,\beta_3,
\gamma$ with the relation given in Table 1.  
Up to M\"obius transformations, by Table \ref{Rational Galois Coverings}, $\varphi:\mathbb{P}^1\rightarrow \mathbb{P}^1$ is given by
$$\varphi:w\longmapsto z=-\alpha_3^2/\gamma^3.$$ 
The associated Galois group of $\varphi$ consists of three elements generated by 
\begin{equation}\label{D3GaloisGroup}{\small
\sigma: \mathbb{P}^1\rightarrow \mathbb{P}^1,\quad\quad w\mapsto \exp(-2\pi\sqrt{-1}/3)\cdot w;\phantom{1111}\tau: \mathbb{P}^1\rightarrow \mathbb{P}^1,\quad\quad w\mapsto 1/w.}
\end{equation}
The ramified points of $\varphi$ are 
\begin{small}$$(0,\infty),~(1,\eta,\eta^2),~(-1,-\eta,-\eta^2),$$\end{small}
with multiplicities $3,~2,~2$ respectively, where $\eta:=\exp(2\pi i/3)$. They are the zeros of $\gamma,~\alpha_3,~\beta_3$, respectively, and are mapped to $\infty,~0$ and $1$, respectively, under $\varphi$. Moreover, $\pm1$ are fixed under $\tau$. 

On the left-hand side of \eqref{eigenspace problem}, in correspondence to \eqref{D3GaloisGroup}, we select $B:=T_{2,2}$ and $A:=\Diag\{\exp(\pi i/3),\exp(-\pi i/3)\}$ as double coverings of 
$\tau$ and $\sigma$, respectively.
They generate a subgroup \begin{small}$$G^\ast\triangleq \{A^k,~BA^k,~0\leq k\leq 5\},\quad A^6 = 1,\; B^2= A^3,\; BAB^{-1}=A^{-1},$$\end{small} in $SU(2)$ isomorphic to the dicyclic group of order 12. It has $6$ conjugacy classes.
{\scriptsize\[\{\Id\},\quad\{-\Id\},\quad\{A,A^5\},\quad\{A^2,A^4\},\quad\{B,BA^2,BA^4\},\quad\{BA,BA^3,BA^5\},\]}
\!\!Hence, $G^\ast$ has $6$ irreducible representations.
{\scriptsize\begin{table}[H]
\begin{center}
\begin{tabular}{|c|c|c|c|c|c|c|c|}
\hline
         &  $\Id$ & $-\Id$ & $\{A,A^5\}$ & $\{A^2,A^4\}$ & $\{BA^{2k}\}$ & $\{BA^{2k+1}\}$ \\ \hline
$\psi_1$ &  $1$   & $1$    & $1$               & $1$               & $1$         & $1$      \\ \hline
$\psi_2$ &  $1$   & $-1$    & $-1$               & $1$               & $i$ & $-i$               \\ \hline
$\psi_3$ &  $1$   & $1$    & $1$               & $1$               & $-1$      & $-1$         \\ \hline
$\psi_4$ &  $1$   & $-1$    & $-1$               & $1$               & $-i$ & $i$              \\ \hline
$\psi_5$     & $2$   & $2$    & $-1$               & $-1$               & $0$       & $0$        \\ \hline
$\psi_6$     & $2$   & $-2$    & $1$               & $-1$               & $0$       & $0$        \\ \hline
\end{tabular}
\end{center}
\end{table}}
Note that $H:=\{\Id,A^2,A^4\}$ is a normal subgroup of $G^\ast$, and $G^\ast$ is a semi-direct product of $H$ and $\{\Id,B,B^2,B^3\}\cong C_4$. So, the first four irreducible representations of degree $1$ are obtained by $G^\ast\rightarrow G^\ast/H=C_4$. Note that $\psi_2,\psi_3$ and $\psi_4$ also arise from 
the action of $G^\ast$ on the invariants $\beta_3,\gamma$, and $\alpha_3$, respectively. The last representation $\psi_6$ is obtained by multiplication of $G$ on the column vectors of $\mathbb{C}^2$ (also equal to $\rchi_{V_1}$). Finally, $\psi_5$ is obtained by the tensor product of $\psi_2$ with $\psi_6$. Under the natural basis $\{e_1,e_2\}$ of $\mathbb{C}^2$, we realize $\psi_5$ as the character of the natural action on the column vectors given by $\psi_2(A)A,~\forall A\in G^\ast$. 

Immediately, we obtain 
\begin{equation}\label{eq-chaD3}{\scriptsize
\begin{split}
\psi_1\cdot \psi_6&=\psi_3\cdot \psi_6=\psi_6,\quad\psi_2\cdot \psi_6=\psi_4\cdot \psi_6=\psi_5,\\
\psi_5\cdot\psi_6&=\psi_2+\psi_4+\psi_6,\quad \psi_6\cdot\psi_6=\psi_1+\psi_3+\psi_5.
\end{split}}
\end{equation}
Moreover, two irreducible subspaces of dimension $1$ in $\mathbb{C}^2\otimes \mathbb{C}^2$ (the case $\psi_6\cdot\psi_6$) are
{\small\begin{equation}\label{key decomposition for D3}{\scriptsize
W_1=\Span\{e_1\otimes e_2-e_2\otimes e_1\},\quad W_3=\Span\{e_1\otimes e_2+e_2\otimes e_1\}.}
\end{equation}}
\!\!This formula also holds for $\psi_5\cdot\psi_6$, such that the first invariant space has character $\psi_2$, and the other has character $\psi_4$.

To determine $\phi_2$, on the right-hand side of \eqref{eigenspace problem}, observe that up to conjugation, there is a unique $D_3$ in $S_4$, i.e., $H_1\triangleq \langle (124),(24)\rangle$, which is double covered by the dicyclic group \begin{small}$$G^*\triangleq \{T_{3,1}^k,~T_{2,1}T_{3,1}^k,~0\leq k\leq 5\},\quad T_{3,1}^6=1, \;T_{2,1}^2=T_{3,1}^3, \;T_{2,1}T_{3,1}T_{2,1}^{-1}=T_{3,1}^{-1},$$\end{small} in $S_4^*$. After projection, all elements of $G^*$, except for the unit element, either fix the face centers or the edge midpoints. So, all ramified points do not lie on the free lines. 

\begin{lemma}\label{first and second isom of D3}
Up to conjugation, there are two isomorphisms $\gamma_i$ from $G^\ast$ to $G^*$ determined by $\gamma_i(A)=T_{3,1},~\gamma_i(B)=(-1)^{i+1}T_{2,1}$, where $i=1,2$. The character of $G^\ast$ acting on column vectors in $\mathbb{C}^2$ by matrix multiplication on the left through $\gamma_i$ is $\psi_6$.
\end{lemma}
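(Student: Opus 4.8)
The plan is to establish the two assertions in turn: first the classification of isomorphisms up to conjugation, then the character computation. I would begin by verifying that the two prescribed maps are genuine isomorphisms. Since $G^\ast$ is presented by $A^6=1$, $B^2=A^3$, $BAB^{-1}=A^{-1}$ and $G^*$ by the identical relations on $T_{3,1},T_{2,1}$, the assignment $\gamma_1(A)=T_{3,1},\ \gamma_1(B)=T_{2,1}$ trivially respects all relations. For $\gamma_2$ one checks that replacing $T_{2,1}$ by $-T_{2,1}$ leaves every defining relation intact, because the sign cancels in $(-T_{2,1})^2=T_{2,1}^2=T_{3,1}^3$ and in $(-T_{2,1})T_{3,1}(-T_{2,1})^{-1}=T_{2,1}T_{3,1}T_{2,1}^{-1}=T_{3,1}^{-1}$. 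Both maps are surjective (their images contain $T_{3,1}$ and $\pm T_{2,1}$, hence all of $G^*$ since $-\Id=T_{3,1}^3$), so by cardinality they are isomorphisms.

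Next I would invoke the automorphism analysis of Section~\ref{digress} with $n=3$: $\mathrm{Aut}(G^\ast)$ is the affine group $\{L_{(a,b)}:(a,b)\in\mathbb{Z}_6^\times\times\mathbb{Z}_6\}$ of order $12$, its inner automorphisms are precisely the $L_{(\pm1,2m)}$ (six of them), and $\mathrm{Out}(G^\ast)$ has order $2$, the nontrivial class detected by the parity of $b$. The decisive structural input is that $H_1=\langle(124),(24)\rangle$ is a point-stabilizer $S_3$ in $S_4$, hence maximal and self-normalizing, so that $N_{S_4^*}(G^*)=G^*$; consequently conjugation by elements of $S_4^*$ that preserve $G^*$ realizes exactly the inner automorphisms of $G^*$. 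Since $\mathrm{Iso}(G^\ast,G^*)$ is a torsor under $\mathrm{Aut}(G^*)$, the isomorphisms modulo conjugation are in bijection with $\mathrm{Out}(G^*)$, giving exactly two classes. I would then exhibit $\gamma_1,\gamma_2$ as representatives: with $\phi\in\mathrm{Aut}(G^\ast)$ defined by $\phi(A)=A,\ \phi(B)=A^3B$ (this is $L_{(1,3)}$, outer since $b=3$ is odd), one has $\gamma_2=\gamma_1\circ\phi$, so $\gamma_2\circ\gamma_1^{-1}$ is outer and the two maps are inequivalent. Concretely this is confirmed by the product rule \eqref{product-rule}: any $B\in G^*$ fixing $T_{3,1}$ lies in $\langle T_{3,1}\rangle$, yet $T_{3,1}^kT_{2,1}T_{3,1}^{-k}=T_{3,1}^{2k}T_{2,1}$ can never equal $-T_{2,1}=T_{3,1}^3T_{2,1}$, as $2k\equiv3\pmod6$ is unsolvable.

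For the character, I would observe that the displayed action is the restriction to $G^\ast$, through $\gamma_i$, of the defining two-dimensional representation of $SU(2)$, so its value at $t$ is $\mathrm{tr}\,\gamma_i(t)$, which depends only on the eigenvalues of $\gamma_i(t)$ in $SU(2)$. Because $\gamma_i$ preserves orders, it suffices to run through the six conjugacy classes and read off traces by order, using that an element of $SU(2)$ of order $1,2,3,4,6$ has its eigenvalue pair—hence its trace—forced: $\Id\mapsto 2$, $A^3=-\Id\mapsto-2$, the order-$6$ class $\{A,A^5\}\mapsto 2\cos(\pi/3)=1$, the order-$3$ class $\{A^2,A^4\}\mapsto 2\cos(2\pi/3)=-1$, and the two order-$4$ classes $\{BA^{2k}\},\{BA^{2k+1}\}\mapsto 0$. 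These values match $\psi_6$ exactly and differ from $\psi_5$ already at $-\Id$ (where $\psi_5(-\Id)=2\neq-2$), so the character equals $\psi_6$ for both $\gamma_1$ and $\gamma_2$.

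The part I expect to require the most care is the normalizer/outer-automorphism bookkeeping: ensuring that ``up to conjugation in $S_4^*$'' corresponds precisely to $\mathrm{Out}(G^*)$ and not to a larger or smaller group. This hinges on the self-normalizing property of the point-stabilizer $S_3\subset S_4$ and on correctly matching the abstract outer automorphism $L_{(1,3)}$ of Section~\ref{digress} with the sign change $T_{2,1}\mapsto-T_{2,1}$; the remaining verifications are routine relation- and trace-checks.
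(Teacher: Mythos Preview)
Your argument is correct and follows the same route as the paper: both identify $\mathrm{Out}(Dic_3)\cong\mathbb Z_2$ via Section~\ref{digress} and exhibit $L_{(1,3)}$ (i.e., $A\mapsto A$, $B\mapsto A^3B=-B$) as the nontrivial class, which is exactly the passage from $\gamma_1$ to $\gamma_2$. Your write-up is in fact more complete than the paper's one-line proof: you make explicit the normalizer step $N_{S_4^*}(G^*)=G^*$ (via the self-normalizing point stabilizer $S_3\subset S_4$) that justifies reducing ``conjugation by $S_4^*$'' to inner automorphisms of $G^*$, and you verify the character claim directly by the order-determines-trace observation for elements of order $1,2,3,4,6$ in $SU(2)$, both of which the paper leaves implicit.
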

{\begin{proof} We work with $G^\ast$, whose outer automorphism group is ${\mathbb Z}_2$. Explicitly, with $n=3$, the data in Section \ref{digress} 
imply that the the outer automorphism group is generated by the automorphism with $(a, b)=(1,3)$, i.e., by $A\mapsto A$ and $B\mapsto A^3B=-B$.
\end{proof}}

\begin{theorem}
In the case $D_3$, let $g([x,y]):\mathbb{P}^1\rightarrow \mathbb{P}^3$ be a holomorphic curve satisfying diagram \eqref{diagram3}. 
Then $\deg g=7$, and up to M\"obius transformations on $\mathbb{P}^1$, 
$g$ is equivalent to
\begin{equation}\label{deg7 D3 case}{\small
g=\begin{pmatrix}
a_1 & a_2 & a_3\\
b_1 & b_2 & b_3
\end{pmatrix}\begin{pmatrix}
\alpha_3\beta_3(\frac{\nu_1}{\xi_1}y+x) & \gamma^3(\frac{\nu_1}{\xi_1}y-x) & \alpha_3^2(\frac{\nu_1}{\xi_1}y-x) \\
\alpha_3\beta_3\nu_1(\frac{\nu_2}{\xi_1}y+x) & \gamma^3\nu_1(\frac{\nu_2}{\xi_1}y-x) & \alpha_3^2\nu_1(\frac{\nu_2}{\xi_1}y-x) 
\end{pmatrix}^T,}
\end{equation}
where $\xi_1=\exp(\frac{\pi i}{4})$, and $\nu_1=\frac{-1+\sqrt{-1}}{1-\sqrt{3}},~\nu_2=\frac{-1+\sqrt{-1}}{1+\sqrt{3}}$. Moreover, {\scriptsize$ \begin{pmatrix}
a_1 & a_2 & a_3\\
b_1 & b_2 & b_3
\end{pmatrix}\in G(2,3)$} 
is the unique two-plane which is perpendicular to 
{\scriptsize $
{(\alpha_3\beta_3\nu_1(\frac{\nu_2}{\xi_1}y+x) ,\gamma^3\nu_1(\frac{\nu_2}{\xi_1}y-x) , \alpha_3^2\nu_1(\frac{\nu_2}{\xi_1}y-x))|_{(x,y)=(t,1)} },
$}
under the usual bilinear inner product. This class depends on one freedom $t$ and belongs to the exceptional transversal family.
\end{theorem}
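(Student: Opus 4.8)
The plan is to fix the degree of $g$ first and then realize $g$ as a $2$-plane inside the multiplicity space of a $1$-dimensional irreducible representation. By Corollary~\ref{Cor4.1} we have $6\le\deg g\le 9$, and $\deg\mathcal{F}=6(\deg g-6)$ by \eqref{degree-difference}. Taking $\phi_1$ to be the chosen double cover into $G^\ast\subset SU(2)$ and $\phi_2=\gamma_i$ from Lemma~\ref{first and second isom of D3} (so that $\mathbb{C}^2$ carries the character $\psi_6$), I would compute $\rchi_{V_n\otimes\mathbb{C}^2}=\rchi_{V_n}\psi_6$ and, using the multiplication rules \eqref{eq-chaD3}, show that the $1$-dimensional constituents are $\psi_1,\psi_3$ (each with the multiplicity of $\psi_6$ in $V_n$) when $n$ is odd, and $\psi_2,\psi_4$ (each with the multiplicity of $\psi_5$ in $V_n$) when $n$ is even. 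Evaluating these multiplicities through \eqref{formula to compute char on Vn} gives multiplicity $2$ at $n=6$ and multiplicity $3$ at $n=7,8,9$; in particular $V_7\otimes\mathbb{C}^2=3\psi_1\oplus 3\psi_3\oplus 5\psi_5$.

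The decisive structural fact is that, since $G^\ast\subset S_4^\ast$ fixes only face centers and edge midpoints, \emph{no} ramified point of $\varphi$ maps onto a free line; hence $\mathcal{F}$, being a $G$-invariant divisor supported among the free-line incidences, is a sum of principal orbits, each of degree $6$. Thus $\deg\mathcal{F}=6(\deg g-6)$ forces $\mathcal{F}$ to consist of exactly $m=\deg g-6$ principal orbits counted with multiplicity. Each such orbit is pinned down by a \emph{single} free-line incidence $g(t)\in\mathcal{L}_j$, which by \eqref{eq-L12} and \eqref{eq-L3} amounts to requiring the $2$-plane $M\in G(2,3)$ to be perpendicular to one explicit vector (the evaluated basis contracted against the free-line vector). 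Since a $2$-plane in $\mathbb{C}^3$ has a $1$-dimensional orthogonal complement, it can meet exactly one such perpendicularity condition. For $n=8$ ($m=2$) and $n=9$ ($m=3$) one would need $M$ perpendicular to two or more independent vectors—either two distinct orbits, or one doubled orbit forcing an additional tangency via Proposition~\ref{Critical Prop Free lines}—which is impossible; these degrees are excluded by over-determination. For $n=6$ ($m=0$) the multiplicity space is only $\mathbb{C}^2$, so $g$ is forced to be the matrix of the two basis vectors of $2\psi_2$ (or $2\psi_4$); expressing these via Proposition~\ref{key basis prop} as $\alpha_3\gamma\cdot u$ and $\beta_3\gamma\cdot u'$ exhibits the common factor $\gamma$, so $g$ degenerates. (For $n=9$ one also checks that the spanning set $\alpha_3^2\gamma\,u_1,\ \gamma^4\,u_1,\ \alpha_3\beta_3\gamma\,u_3$ of $3\psi_1$ all carry the factor $\gamma$, giving an independent second reason for exclusion.) This leaves $\deg g=7$ as the only possibility.

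For $n=7$ I would then build an explicit basis of $3\psi_1$ by Proposition~\ref{key basis prop}(2): the three independent degree-$6$ invariants $\alpha_3\beta_3$ (character $\psi_1$), together with $\alpha_3^2$ and $\gamma^3$ (character $\psi_3$), are multiplied by the $\psi_1$- and $\psi_3$-lines $u_1,u_2$ of $V_1\otimes\mathbb{C}^2$ so that every product has character $\psi_1$, reproducing the three columns in \eqref{deg7 D3 case}. As in the earlier cases, I expect the two isomorphisms $\gamma_1,\gamma_2$ and the representations $\psi_1$ versus $\psi_3$ to yield $PSL_2$-equivalent families, which I would confirm by writing down the connecting left/right transformations; it then suffices to treat $3\psi_1$. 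Writing $g=M\,(v_1,v_2,v_3)^T$ with $M\in G(2,3)$, I would factor $\det g$ and apply Lemma~\ref{linear system of D2} in the form $s_1\alpha_3^2+s_2\gamma^3$ to read off $\mathcal{Q}$, identifying the single principal orbit $\varphi^{-1}(\varphi(t))$ supporting $\mathcal{F}$. After using the right action of $\phi_2(G^\ast)$ to normalize which $\mathcal{L}_j$ occurs, the incidence $g(t)\in\mathcal{L}_j$ becomes exactly the perpendicularity condition in the statement, determining $M$ uniquely as a rational function of the single parameter $t$.

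Finally I would verify that the family is exceptional transversal and dispose of the spurious branch. Since $\mathcal{F}=\mathcal{Q}$ along the free-line orbit, while the remaining points of $\supp\mathcal{Q}$ are edge/face centers with $\ord_p\mathcal{Q}=1$ and $\mult_p\varphi\in\{2,3\}$, neither condition (1) nor (2) of Theorem~\ref{thm} can be met, so $F$ is not ramified at the $1$-dimensional orbit; by Proposition~\ref{charc on 2dim orbit} and Proposition~\ref{ChacExcepTransFamily} it meets only the $2$-dimensional orbit, transversally, placing it in the exceptional transversal family. As in the $D_2$ analysis, the normalization of $\mathcal{L}_j$ splits into two sub-branches, and I expect the alternative placement of $g(t)$ to collapse $F$ into a hyperplane $\mathbb{P}^5$, which I would rule out by exhibiting the explicit linear relation among the coordinates of $F=f_1\circ g$ after dividing $g\cdot uv(u^4-v^4)$ by the free divisor and reducing via invariant theory. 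The hard part will not be the degree bookkeeping but this last honest check: showing the surviving branch is genuinely linearly full in $\mathbb{P}^6$ while the other is not requires carrying out the division by the degree-$12$ free divisor and computing the rank of the resulting coordinate forms, which is the one step where a symbolic computation seems unavoidable.
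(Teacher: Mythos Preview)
Your overall strategy and your treatment of the principal case $\deg g=7$ match the paper closely: the decomposition $\rchi_{V_7}=\psi_2+\psi_4+3\psi_6$, the explicit basis of $3W_6$ via the invariants $\alpha_3\beta_3,\gamma^3,\alpha_3^2$ and Proposition~\ref{key basis prop}, the reduction of $3W_1$ versus $3W_3$ by $y\mapsto -y$, the factorization of $\det g$ through Lemma~\ref{linear system of D2}, and the appeal to Theorem~\ref{thm} for the exceptional transversal conclusion are all exactly what the paper does.

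Where you diverge is in excluding $\deg g=8,9$, and here your argument has a genuine gap. The paper does \emph{not} use over-determination of perpendicularity conditions. Instead it observes that each ramified point $p$ of $\varphi$ is a fixed point of some nontrivial $\sigma\in G$, and since the rows of $g(p)$ must lie in the same one-dimensional eigenspace of $\phi_2(\sigma)^T$ (from the eigenvalue equation~\eqref{eigenspace problem} evaluated at $p$), one has $\det g(p)=0$, i.e.\ $p\in\supp\mathcal{Q}$. As none of the eight ramified points map to free lines, they contribute at least $8$ to $\deg\mathcal{Q}-\deg\mathcal{F}=36-4\deg g$, forcing $\deg g\le 7$ in one line. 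Your over-determination argument, by contrast, asserts that $m\ge 2$ free-line orbits impose $m$ \emph{independent} perpendicularity conditions on $M\in G(2,3)$; but the perpendicularity vectors $v(t_i)$ depend on the orbit parameters $t_i$, which are themselves zeros of $\det g$ and hence functions of $M$. The system is coupled, and you give no reason why $v(t_1)$ and $v(t_2)$ cannot be proportional for some special $M$. The paper's degree count sidesteps this entirely.

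Two minor corrections. First, your spanning set for $3\psi_1$ at $n=9$ is slightly mislabeled (the third invariant $\alpha_3\beta_3\gamma$ has character $\psi_3$, not $\psi_1$, though the common-factor conclusion survives since all degree-$8$ invariants with character $\psi_1$ or $\psi_3$ are divisible by $\gamma$). Second, you should not expect ``two sub-branches'' after normalizing the free line: unlike the $D_2$ case, here $\phi_2(G^\ast)$ acts \emph{transitively} on the six free lines (the generators induce the permutations $(145)(263)$ and $(12)(34)(56)$, which together generate a simply transitive action), so there is a single branch and no spurious $\mathbb{P}^5$ case to rule out.
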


\begin{proof}
We choose $\phi_2$ to be the isomorphism $\gamma_1$ stated in the Lemma \ref{first and second isom of D3}. The choice of $\gamma_2$ is similar. The action of $\phi_2$ on $\mathbb{C}^2$ is equivalent to the irreducible representation $\psi_6$ under the new basis $E_1\triangleq e_1+\nu_1e_2,~E_2\triangleq \frac{\nu_1}{\xi_1}(e_1+\nu_2e_2)$ (constructed by diagonalizing $T_{3,1}$), where \begin{small}$$\xi_1=\exp(\pi i/4),\quad\nu_1=(-1+\sqrt{-1})/(1-\sqrt{3}),\quad\nu_2=(-1+\sqrt{-1})/(1+\sqrt{3}).$$\end{small}

 It follows from  Corollary \ref{Cor4.1} that $6\leq \deg(g)\leq 9$. Since $\{g(0),g(\infty)\}$ are among the centers of faces, and $\{g(\eta^k),~0\leq k\leq 5\}$, where $\eta=\exp(\frac{\pi i}{3})$, are among the midpoints of edges, eight in all, by \eqref{degree-difference}, we have {\small$$\deg \mathcal{Q}-\deg \mathcal{F}=2\deg g-6(\deg g-6)=36-4\deg g\geq 8.$$} \!\!Therefore, $6\leq \deg g\leq 7$. Thus the cases $\deg g=8,9$ do not occur.

\textbf{Case (1)}: Assume that $\deg g=6$. 
Through $\phi_1$, we have 
$\rchi_{V_6}=\psi_1+2\psi_3+2\psi_5$ (computed by \eqref{formula to compute char on Vn} or by the Poincar\'e series \cite{Springer}), where 
\begin{equation}\label{W5}\medmath{
2W_5=\Span\{y\alpha_3\gamma,-x\alpha_3\gamma\}\oplus \Span\{y\beta_3\gamma,x\beta_3\gamma\},}
\end{equation}
constructed by the invariant $\alpha_3\gamma$ (vs. $\beta_3\gamma$) of degree $5$ with character $\psi_2$ (vs. $\psi_4$) and (2) in Proposition \ref{key basis prop}. 
We rule out this case because vectors in $2W_5$ have common factors.

\textbf{Case (2)}: Assume that $\deg g=7$. Through $\phi_1$, we have
{\small$\rchi_{V_7}=\psi_2+\psi_4+3\psi_6$}, where 
\begin{small}$$3W_6=\Span\{y\alpha_3\beta_3,-x\alpha_3\beta_3\}\oplus \Span\{y\gamma^3,x\gamma^3\}\oplus \Span\{y\alpha_3^2,x\alpha_3^2\}.$$\end{small}
constructed from invariants $\{\alpha_3\beta_3, \gamma^3, \alpha_3^3\}$ in $V_6$ by Proposition \ref{key basis prop}. It follows from \eqref{eq-chaD3} and \eqref{key decomposition for D3} that 
{\small$\rchi_{V_7\otimes \mathbb{C}^2}=\rchi_{V_7}\cdot \psi_6=3\psi_1+3\psi_3+5\psi_5$}, and the first two invariant subspaces are given by the following 
{\scriptsize\begin{align*}\label{direct sum decomposition D3 deg 7 first kind}
\begin{split}
3W_1=& \Span\{y\alpha_3\beta_3y\otimes E_2-(-x\alpha_3\beta_3)\otimes E_1,~y\gamma^3\otimes E_2-x\gamma^3\otimes E_1,~y\alpha_3^2\otimes E_2-x\alpha_3^2\otimes E_1\},\\
3W_3=& \Span\{y\alpha_3\beta_3y\otimes E_2+(-x\alpha_3\beta_3)\otimes E_1,~y\gamma^3\otimes E_2+x\gamma^3\otimes E_1,~y\alpha_3^2\otimes E_2+x\alpha_3^2\otimes E_1\}.
\end{split}
\end{align*}}
\!\!Since $3W_3$ differs from $3W_1$ only by a coordinate change from $y$ to $-y$, we focus our analysis on the space $3W_1$. A plane in $3W_1$ is spanned by the two rows of the matrix \eqref{deg7 D3 case} (after expanding $E_i$ into $e_j$). 
Note that 
\begin{small}$$\det g=\frac{2(1+\sqrt{-1})\sqrt{6}}{\sqrt{3}-1}\alpha_3\beta_3\gamma(p_{12}\gamma^3+p_{13}\alpha_3^2).$$\end{small}

By Lemma \ref{linear system of D2} and the fact that zeros of $\alpha_3,~\beta_3,~\gamma$ do not lie on the free lines, we must have {\small$\mathcal{Q}=\alpha_3+\beta_3+\gamma+\varphi^{-1}(\varphi(t))$}, where $\varphi^{-1}(\varphi(t))$ consists of $6$ distinct points which is the principal orbit of $D_3$ through the point $t$, and {\small$\mathcal{F}=\varphi^{-1}(\varphi(t))$} (here, $t\in\mathbb{C}\setminus\{0,1\}$ is a solution of $p_{12}\gamma^3+p_{13}\alpha_3^2=0$). 

Note that $g(t)$ lies on one of the six free lines ${\mathcal L}_i$. Meanwhile, the action of multiplying {\small $T_{3,1}^T$} on the right gives a permutation $(145)(263)$, and the action of multiplying {\small$T_{2,1}^T$} on the right gives a permutation $(12)(34)(56)$ on the free lines. Therefore, 
we may assume that $g(t)$ lies on ${\mathcal L}_1$. Then the perpendicular conclusion follows from \eqref{eq-L12}.
The sextic curve $F$ downstairs is computed by {\scriptsize$g\cdot uv(u^4-v^4)/(x^3-t^3y^3)(x^3-t^{-3}y^3)$} and invariant theory. By Theorem~\ref{thm}, $F$ belongs to the exceptional transversal family.
\end{proof}

\section{Dihedral $D_4$ of order $8$}

In \eqref{diagram3}, assume that $\deg \varphi=8$ and $G=D_4$. Recall that the invariants of $D_4$ are generated by the three fundamental invariants $\alpha_4,\beta_4,
\gamma$ with the relation given in Table 1. 
Up to M\"obius transformations, by Table \ref{Rational Galois Coverings}, $\varphi:\mathbb{P}^1\rightarrow \mathbb{P}^1$ is given by
 $$\varphi:w\longmapsto z=-\alpha_4^2/\gamma^4=-(w^4-2+\frac{1}{w^4})/4.$$ Thus the Galois group of $\varphi$ consists of three elements generated by 
{\small\begin{equation}\label{D4GaloisGroup}
\sigma: \mathbb{P}^1\rightarrow \mathbb{P}^1,\quad\quad w\mapsto -\sqrt{-1}\cdot w;\phantom{1111}\tau: \mathbb{P}^1\rightarrow \mathbb{P}^1,\quad\quad w\mapsto 1/w.
\end{equation}}
\!\!The ramified points of $\varphi$ are 
{\small\[(0,\infty),~(\pm 1,\pm i),~(\pm \xi_1,\pm \xi_3),\]}
\!\!with multiplicities $4,~2,~2$ respectively, where $\xi_k:=\exp(k\pi i/4)$. They are the zeros of $\gamma,~\alpha_4,~\beta_4$, respectively, and are mapped to $\infty,~0$ and $1$, respectively, under $\varphi$. Moreover, $\pm1$ are fixed under $\tau$, $\pm\xi_1$ are fixed under $\tau\sigma$. Up to M\"obius transformations acting on both copies of $\mathbb{P}^1$, we can interchange $(\pm 1, \pm i)$ and $(\pm \xi_1,\pm\xi_3)$. 

On the left-hand side of \eqref{eigenspace problem}, in correspondence to \eqref{D3GaloisGroup}, we select $B\triangleq T_{2,2}$ and $A\triangleq T_{1,1}$ as double coverings of 
$\tau$ and $\sigma$, respectively.
Note that they generate a subgroup \begin{footnotesize}
$$G^*\triangleq\{T_{1,1}^k,~T_{2,2}T_{1,1}^k,~0\leq k\leq 7\}=\{T_{1,k},~T_{2,k},~0\leq k\leq 7\},\quad T_{1,1}^8=1,\, T_{2,2}^2=T_{1,1}^4, \,T_{2,2}T_{1,1}T_{2,2}^{-1}=T_{1,1}^{-1},$$\end{footnotesize} the generalized quaternion group of order 16, in $S_4^*$ . It has $7$ conjugacy classes 
{\scriptsize\[\{\Id\},\{-\Id\},\{T_{1,1},T_{1,1}^7\},\{T_{1,1}^2,T_{1,1}^6\},\{T_{1,1}^3,T_{1,1}^5\},\{T_{2,2}T_{1,1}^{2k},~0\leq k\leq 3\},\{T_{2,2}T_{1,1}^{2k+1},~0\leq k\leq 3\}\]}
\!\!and $7$ irreducible representations.
{\scriptsize\begin{table}[H]
\begin{center}
\begin{tabular}{|c|c|c|c|c|c|c|c|c|}
\hline
         &  $\Id$ & $-\Id$ & $\{T_{1,1},T_{1,1}^7\}$ & $\{T_{1,1}^2,T_{1,1}^6\}$ & $\{T_{1,1}^3,T_{1,1}^5\}$ & $\{T_{2,2}T_{1,1}^{2k}\}$ & $\{T_{2,2}T_{1,1}^{2k+1}\}$ \\ \hline
$\psi_1$ &  $1$   & $1$    & $1$               & $1$               & $1$         & $1$    & $1$  \\ \hline
$\psi_2$ &  $1$   & $1$    & $1$               & $1$               & $1$ & $-1$ & $-1$               \\ \hline
$\psi_3$ &  $1$   & $1$    & $-1$               & $1$               & $-1$      & $1$ & $-1$         \\ \hline
$\psi_4$ &  $1$   & $1$    & $-1$               & $1$               & $-1$ & $-1$ & $1$              \\ \hline
$\psi_5$     & $2$   & $2$    & $0$               & $-2$               & $0$       & $0$  & $0$      \\ \hline
$\psi_6$     & $2$   & $-2$    & $\sqrt{2}$               & $0$               & $-\sqrt{2}$       & $0$   & $0$     \\ \hline
$\psi_7$     & $2$   & $-2$    & $-\sqrt{2}$               & $0$               & $\sqrt{2}$       & $0$   & $0$     \\ \hline
\end{tabular}
\end{center}
\end{table}}
So, the first five irreducible representations are obtained by $G^*\rightarrow G^*/\{\pm\Id\}=D_4$. Note that $\psi_2,\psi_3$, and $\psi_4$ also arise from 
the action of $G^*$ on the invariants $\gamma, \beta_4$ and $\alpha_4$, respectively. The representation $\psi_6$ is obtained by multiplications of $G^*$ on the column vectors of $\mathbb{C}^2$. Finally, $\psi_7$ is obtained by the tensor product of $\psi_3$ with $\psi_6$. Under the natural basis $\{e_1,e_2\}$ of $\mathbb{C}^2$, we realize $\psi_7$ as the character of the natural action on the column vectors given by $\psi_3(A)A,~\forall A\in G^*$. 
Immediately, we obtain 
\begin{equation}\label{multi table for D4}{\scriptsize
\begin{split}
\psi_1\cdot \psi_6&=\psi_2\cdot \psi_6=\psi_6,\quad\psi_3\cdot \psi_6=\psi_4\cdot \psi_6=\psi_7,\\
\psi_5\cdot\psi_6&=\psi_6+\psi_7,\quad\psi_6\cdot\psi_6=\psi_1+\psi_2+\psi_5,\quad\psi_7\cdot\psi_6=\psi_3+\psi_4+\psi_5.
\end{split}}
\end{equation}
Moreover, two irreducible subspaces of dimension $1$ in {\small$\mathbb{C}^2\otimes \mathbb{C}^2$} (the case {\small$\psi_6\cdot\psi_6$}) are
{\small\begin{equation}\label{key decomposition for D4}
W_1=\Span\{e_1\otimes e_2-e_2\otimes e_1\},\quad W_2=\Span\{e_1\otimes e_2+e_2\otimes e_1\}.
\end{equation}}
\!\!This formula also holds for $\psi_7\cdot\psi_6$, such that the first invariant space has character $\psi_3$, and the other has character $\psi_4$.

On the right-hand side of \eqref{eigenspace problem}, 
up to conjugation, there is a unique $D_4$ in $S_4$ given by
\begin{small}$$H_1\triangleq\langle (14)(23),(12)(34),(24)\rangle.$$\end{small}
Note that $H_1$ is double covered by the same group $G^*$ above.  

\begin{lemma}\label{four isoms of D4}
Up to conjugation, there are four isomorphisms $\gamma_i$ from $G^*$ to $G^*$ determined by $\gamma_1=\Id$, and
\begin{footnotesize}
$$\aligned\gamma_{2}(T_{1,1})=T_{1,1},\;\gamma_{2}(T_{2,2})=T_{2,3};\quad \gamma_{3}(T_{1,1})=-T_{1,1},\;\gamma_{3}(T_{2,2})=T_{2,2};\quad 
\gamma_{4}(T_{1,1})=-T_{1,1},\;\gamma_{4}(T_{2,2})=T_{2,3}.
\endaligned$$\end{footnotesize}
Only one pair $\{g(\pm 1),g(\pm i)\}$ or $\{g(\pm\xi_1),g(\pm \xi_3)\}$ lies on the free lines. 
If we assume that 
$\{g(\pm 1),g(\pm i)\}$ lies on the free lines,
then only two isomorphisms, $\gamma_1$ and $\gamma_3$, remain. The characters of $G^*$ acting on 
$\mathbb{C}^2$ by matrix multiplication on the left are $\psi_6$ and $\psi_7$, respectively.
\end{lemma}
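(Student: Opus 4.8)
The plan is to split the statement into a purely group‑theoretic enumeration of the four isomorphisms and a geometric selection among them, finishing with a direct trace computation.

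\textbf{Enumerating the four isomorphisms.} First I would feed the data of Section \ref{digress} with $n=4$ into the problem, so that $G^\ast$ is the dicyclic group generated by $x=A=T_{1,1}$ and $y=B=T_{2,2}$. By \eqref{automorphism} every automorphism is an affine map $L_{(a,b)}:x\mapsto x^{a},\,y\mapsto x^{b}y$ with $(a,b)\in\mathbb Z_8^\times\times\mathbb Z_8$, and the inner ones are precisely $L_{(\pm1,2m)}$; hence $\mathrm{Out}(G^\ast)$ has order $32/8=4$, with classes indexed by $(a\bmod\pm,\,b\bmod 2)$. I would then verify, using only the explicit matrices, that $-T_{1,1}=T_{1,5}=A^{5}$ and $T_{1,1}^{b}T_{2,2}=T_{2,\,b+2}$, so that $\gamma_1=L_{(1,0)}$, $\gamma_2=L_{(1,1)}$, $\gamma_3=L_{(5,0)}$, $\gamma_4=L_{(5,1)}$ carry pairwise distinct invariants $(a\bmod\pm,\,b\bmod2)$ and therefore exhaust the four outer classes. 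To justify that ``up to conjugation in $S_4^\ast$'' means ``up to inner automorphism,'' I would observe that $G^\ast$ is a Sylow $2$-subgroup of $S_4^\ast$ (order $16$, index $3$, non‑normal since $S_4$ has three copies of $D_4$), so $[S_4^\ast:N_{S_4^\ast}(G^\ast)]=3$ forces $N_{S_4^\ast}(G^\ast)=G^\ast$; after conjugating any candidate isomorphism so its image is the fixed copy $G^\ast$, the residual freedom is exactly conjugation by $G^\ast$.

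\textbf{Which ramified pairs lie on the free lines.} The stabilizers in the deck group of the five ramified pairs are $\sigma$ (for $0,\infty$), $\tau$ and $\tau\sigma^{2}$ (for $\pm1,\pm i$), and $\tau\sigma$ and $\tau\sigma^{3}$ (for $\pm\xi_1,\pm\xi_3$), lifting to $A$, and to elements of the form $BA^{j}$. Evaluating \eqref{eigenspace problem} at a fixed point $p$ shows $g(p)$ is projectively fixed under right multiplication by $\phi_2(\mathrm{stab}_p)^{T}$, so $g(p)$ sits on an eigen‑ruling of $\phi_2(\mathrm{stab}_p)$; by \eqref{26} and the ensuing discussion this is a free line exactly when $\phi_2(\mathrm{stab}_p)$ is a vertex‑axis element. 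Using $T_{2,2}T_{1,1}^{k}=T_{2,\,2-k}$ and $\phi_2(A)\in\{T_{1,1},-T_{1,1}\}$ (an order‑$8$ vertex quarter‑turn in all cases), I would compute $\phi_2(BA^{j})=\pm T_{2,\,c-j}$ with $c\in\{2,3\}$ according to whether $\phi_2(B)=T_{2,2}$ or $T_{2,3}$. The table beside Figure $2$ and Proposition \ref{correspondence} identify $T_{2,\mathrm{even}}$ with the double transpositions (vertex‑axis half‑turns) and $T_{2,\mathrm{odd}}$ with the transpositions (edge‑axis half‑turns). Reading parities then yields: for $\gamma_1,\gamma_3$ (where $c=2$) the pair $\{g(\pm1),g(\pm i)\}$ sits at vertices and $\{g(\pm\xi_1),g(\pm\xi_3)\}$ at edge midpoints, while for $\gamma_2,\gamma_4$ (where $c=3$) the two roles are exchanged; in every case $\{g(0),g(\infty)\}$ is at vertices, so exactly one of the other two pairs lies on free lines.

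\textbf{Selection and character computation.} Since a M\"obius transformation on both copies of $\mathbb P^{1}$ interchanges the $\alpha_4$‑ and $\beta_4$‑pairs without altering $\varphi$, normalizing so that $\{g(\pm1),g(\pm i)\}$ lies on the free lines discards $\gamma_2,\gamma_4$ and leaves $\gamma_1,\gamma_3$. I would finish by computing the character $\chi(t)=\TR(\phi_2(t))$ of the left action on $\mathbb C^{2}$. For $\gamma_1=\mathrm{Id}$ this is the natural inclusion $G^\ast\hookrightarrow SU(2)$, with $\TR(T_{1,1}^{k})=2\cos(k\pi/4)$ and $\TR(T_{2,k})=0$, matching $\psi_6$ in the character table. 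For $\gamma_3$ one has $\phi_2(A^{k})=(-1)^{k}T_{1,1}^{k}$, so the values on the classes $\{A,A^{7}\}$ and $\{A^{3},A^{5}\}$ flip sign relative to $\gamma_1$, producing $(2,-2,-\sqrt2,0,\sqrt2,0,0)=\psi_7$.

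\textbf{Main obstacle.} The delicate point is the geometric bookkeeping of the second step: lifting each stabilizer to the correct element $BA^{j}$ of $G^\ast$, tracking the index of $T_{2,\,c-j}$ under the multiplication rule, and matching its parity to the vertex/edge dichotomy through Proposition \ref{correspondence}. The automorphism count and the trace computations are routine once this correspondence is firmly established.
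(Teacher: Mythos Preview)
Your proposal is correct and follows the same route as the paper: invoke the dicyclic description from Section~\ref{digress} with $n=4$ to get $\mathrm{Out}(G^\ast)\cong(\mathbb Z/2)^2$ with representatives $(a,b)\in\{(1,0),(1,1),(5,0),(5,1)\}$, which are exactly $\gamma_1,\gamma_2,\gamma_3,\gamma_4$. The paper's published proof is a single sentence citing Section~\ref{digress}; you supply the details the paper leaves implicit, most notably the Sylow argument that $N_{S_4^\ast}(G^\ast)=G^\ast$ (so conjugation in $S_4^\ast$ reduces to inner automorphism), the parity bookkeeping $T_{2,\mathrm{even}}\leftrightarrow$ vertex-axis versus $T_{2,\mathrm{odd}}\leftrightarrow$ edge-axis via Proposition~\ref{correspondence}, and the explicit trace check that $\gamma_3$ yields $\psi_7$. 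These are genuine clarifications rather than a different method.
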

\begin{proof} The outer automorphism group of $G^*$ is the Klein-4 group. Explicitly, with $n=4$ the data in Section \ref{digress} 
imply that the outer automorphism group of $G^*$ is generated by the four automorphisms given by $(a,b)= (1,0),(1,1),(5,0),(5,1)$, which are the ones above.
\end{proof}

\begin{theorem}
In the case of $D_4$, let $g([x,y]):\mathbb{P}^1\rightarrow \mathbb{P}^3$ be a holomorphic curve satisfying diagram \eqref{diagram3}. Then up to M\"obius transformations on $\mathbb{P}^1$ and the $PSL_2$-equivalence, we have $\deg g =9$ or $11$. Moreover,  the following classification holds. 
\begin{enumerate}
\item[\rm{(1)}] If $\deg g=9$, then 
{\small\begin{equation}\label{deg 9 D4 case}
g=\begin{pmatrix}
a_1 & a_2 & a_3 \\
b_1 & b_2 & b_3
\end{pmatrix}\begin{pmatrix}
x\alpha_4^2  & x\gamma^4 & -\alpha_4\gamma\frac{\partial \beta_4}{\partial y}\\
y\alpha_4^2 & y\gamma^4 & \alpha_4\gamma\frac{\partial \beta_4}{\partial x}
\end{pmatrix}^T.
\end{equation}}
This class depends on two freedoms and belongs to the generally ramified family.

\item[\rm{(2)}] If  $\deg g=11$, then
\begin{equation}\label{deg 11 D4 case}{\small
g=\begin{pmatrix}
a_1 & a_2 & a_3 \\
b_1 & b_2 & b_3
\end{pmatrix}\begin{pmatrix}
-x\alpha_4\gamma^3 & x\beta_4\gamma^3 & \alpha_4\beta_4\frac{\partial \alpha_4}{\partial y} \\
y\alpha_4\gamma^3 & y\beta_4\gamma^3 & \alpha_4\beta_4\frac{\partial \alpha_4}{\partial x}
\end{pmatrix}^T,}
\end{equation}
where {\scriptsize$\begin{pmatrix}
a_1 & a_2 & a_3 \\
b_1 & b_2 & b_3
\end{pmatrix}\in G(2,3)$} is the unique plane which is perpendicular to 
\begin{equation}\begin{small}\label{perp condition deg 11 D4}
{(-(t+1)^2(t^2+1)t^3,(t^4+1)t^3,-(t^3+1)(t^4+1)(t^3+t^2+t+1))},\end{small}
\end{equation}
under the bilinear inner product. This class depends on one freedom $t$ and belongs to the exceptional transversal family.
\end{enumerate}
\end{theorem}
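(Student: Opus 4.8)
The plan is to run the machinery of the dihedral cases already treated, reducing the problem to locating a $2$-plane inside a $k$-fold copy ($k\ge 2$) of a $1$-dimensional irreducible summand of $V_n\otimes{\mathbb C}^2$ and then reading off the intersection divisors $\mathcal Q$ and $\mathcal F$. First I would pin down the admissible degrees. Corollary \ref{Cor4.1} gives $8\le \deg g\le 12$. To cut this down I would use a parity argument: on the right-hand side of \eqref{eigenspace problem} the space ${\mathbb C}^2$ carries the character $\psi_6$ or $\psi_7$ (Lemma \ref{four isoms of D4}), both of which send $-\Id$ to $-2$; by the multiplication table \eqref{multi table for D4} a $1$-dimensional irreducible $\psi_i$ ($i\le 4$) occurs in $\chi_{V_n}\cdot\psi_6$ only if $\chi_{V_n}$ already contains $\psi_6$ or $\psi_7$. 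Since $\chi_{V_n}(-\Id)=(-1)^n(n+1)$ while $\psi_6(-\Id)=\psi_7(-\Id)=-2$, these two $2$-dimensional representations occur in $V_n$ only for odd $n$ (the odd-degree counterpart of Proposition \ref{poincareEven}). Hence $\deg g$ is odd, so $\deg g\in\{9,11\}$.

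For both degrees I would fix $\phi_2=\gamma_1$ from Lemma \ref{four isoms of D4}, so that $g(0),g(\infty)$ and the pair $\{g(\pm1),g(\pm i)\}$ land on free lines while $\{g(\pm\xi_1),g(\pm\xi_3)\}$ is sent to edge centers. Computing $\chi_{V_9}$ and $\chi_{V_{11}}$ by \eqref{formula to compute char on Vn} gives $V_9\cong 3\psi_6\oplus2\psi_7$ and $V_{11}\cong 3\psi_6\oplus3\psi_7$, whence by \eqref{multi table for D4} and \eqref{key decomposition for D4} the summands $3\psi_1$ (for degree $9$) and $3\psi_4$ (for degree $11$) appear; every other $1$-dimensional summand either carries a common polynomial factor, so that $\deg g$ drops, or is related to these by the substitutions $y\mapsto -y$, $x\leftrightarrow y$, and a right $S_4^*$-multiplication, exactly as in the $D_2,D_3$ cases. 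I would build explicit bases of these summands using Proposition \ref{key basis prop}: for degree $9$ the invariants $\alpha_4^2,\gamma^4$ (both of character $\psi_1$) paired with $(x,y)$, together with $\alpha_4\gamma$ (character $\psi_3$) paired with $(-\partial_y\beta_4,\partial_x\beta_4)$, yield \eqref{deg 9 D4 case}; for degree $11$ the invariants $\alpha_4\gamma^3,\beta_4\gamma^3,\alpha_4\beta_4$ produce \eqref{deg 11 D4 case}. Writing $g=MB^T$ with $M\in G(2,3)$ and applying the Cauchy–Binet formula, the first two rows of $B$ are proportional to $(x,y)$, so their $2\times2$ minor vanishes and Euler's identity collapses the remaining minors; this yields the clean factorizations $\det g=4\alpha_4\beta_4\gamma\,(p_{13}\alpha_4^2+p_{23}\gamma^4)$ in degree $9$ and, after invoking the relation $\alpha_4^2-\beta_4^2+\gamma^4=0$ from Table \ref{invariants and relation}, $\det g=2\alpha_4\beta_4\gamma^3(c_1\alpha_4^2+c_2\gamma^4)$ in degree $11$.

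From here the two cases diverge. In degree $9$, $\deg\mathcal F=6$ by \eqref{degree-difference}, and Proposition \ref{Critical Prop Free lines} gives $\mathcal F=(\gamma)+(\alpha_4)$, while Lemma \ref{linear system of D2} identifies the last factor of $\det g$ as a generic principal orbit $\varphi^{-1}(\varphi(t))$ of $8$ distinct points with $\mult_p(\varphi)=1$; these lie in $\supp\mathcal Q\setminus\supp\mathcal F$, so Theorem \ref{thm}(1) places $F$ in the generally ramified family, and $M\in G(2,3)$ supplies the two free parameters. In degree $11$, $\deg\mathcal F=18$; a local computation via Proposition \ref{Critical Prop Free lines} gives $\mathcal F(0)=\mathcal F(\infty)=3$ and order $1$ at each zero of $\alpha_4$, accounting for only $10$, so the missing $8$ must come from the principal orbit $\varphi^{-1}(\varphi(t))$ being mapped entirely onto free lines. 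This is precisely the perpendicularity constraint: after normalizing by a right $S_4^*$-multiplication I would place $g(t)$ on ${\mathcal L}_3$, which by \eqref{eq-L3} forces $M$ to be orthogonal to the evaluated column-difference vector — equal, up to the spurious factor $(t-1)$, to \eqref{perp condition deg 11 D4}. The competing normalization placing $g(t)$ on ${\mathcal L}_1$ (via \eqref{eq-L12}) must then be excluded by checking, as in the $C_2$ and $D_2$ cases, that the resulting $F$ satisfies a linear syzygy and so degenerates into ${\mathbb P}^5$; Proposition \ref{charc on 2dim orbit} together with Theorem \ref{thm} then confirms that the surviving $1$-parameter family lies in the exceptional transversal family.

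The main obstacle I anticipate is the degree-$11$ divisor bookkeeping: correctly forcing the entire principal orbit onto the free lines, selecting the right line ${\mathcal L}_3$ among the six, and carrying out the elimination that rules out the alternative by exhibiting the explicit linear relation cutting $F$ down to ${\mathbb P}^5$. The character and multiplicity computations, the Cauchy–Binet factorizations of $\det g$, and the verification that the discarded $1$-dimensional summands are redundant are all routine but must be executed carefully to confirm that exactly the two families \eqref{deg 9 D4 case} and \eqref{deg 11 D4 case} arise.
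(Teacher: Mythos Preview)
Your proposal is correct and follows essentially the same route as the paper: reduce to $1$-dimensional summands in $V_n\otimes\mathbb C^2$, build bases via Proposition~\ref{key basis prop}, factor $\det g$ through Lemma~\ref{linear system of D2}, and separate the $\mathcal L_1$/$\mathcal L_3$ branches in degree~$11$ by the $\mathbb P^5$-degeneration check. The only noticeable difference is in narrowing the degree: you invoke parity alone to go from $8\le\deg g\le12$ directly to $\{9,11\}$, whereas the paper first uses the geometric bounds $\deg\mathcal F\ge6$ (six ramified points forced onto free lines) and $\deg\mathcal Q-\deg\mathcal F\ge4$ (the four zeros of $\beta_4$ mapped to edge centers) to get $9\le\deg g\le11$, and only then applies Proposition~\ref{poincareEven} to exclude $10$; your shortcut is legitimate and slightly cleaner, though the paper's geometric bounds are reused anyway when computing $\mathcal F$ in each case.
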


\begin{proof} We choose $\phi_2$ to be the isomorphism $\gamma_1$ stated in the Lemma \ref{four isoms of D4}. 
The choice of $\gamma_3$ is similar. 

It follows from  Corollary \ref{Cor4.1} that $6\leq \deg(g)\leq 9$. Since $\{g(0),g(\infty)\}$ and $\{g(\pm 1),g(\pm i)\}$ lie on the free lines, six in all, while $\{g(\pm \xi_1),g(\pm \xi_3)\}$ do not, we conclude $\deg \mathcal{F}\geq 6$, and 
\[\deg \mathcal{Q}-\deg \mathcal{F}=2\deg g-6(\deg g-8)=48-4\deg g\geq 4.\] Therefore $9\leq \deg g\leq 11$. 
Proposition \eqref{poincareEven} allows us to exclude the case when 
$\deg g=10$. 

\textbf{Case (1)}: Assume that $\deg g=9$. Then $\deg \mathcal{F}=6$ and $\mathcal{F}=(0)+(\infty)+(1)+(i)+(-1)+(-i)$. 
Through $\phi_1$, we have $\rchi_{V_9}=3\psi_6+2\psi_7$, where 
\begin{small}$$3W_6=\Span\{y\gamma^4,-x\gamma^4\}\oplus \Span\{y\alpha_4^2,-x\alpha_4^2\}\oplus \Span\{\alpha_4\gamma\frac{\partial \beta_4}{\partial x},\alpha_4\gamma\frac{\partial \beta_4}{\partial y}\},$$\end{small}
is constructed by invariant $\gamma^4,~\alpha_4^2,~\alpha_4\gamma$ of degree $8,~8,~6$ and with characters $\psi_1,~\psi_1$ and $\psi_3$, respectively (both (1) and (2) in Proposition \ref{key basis prop} are used). 
Similarly, we have \begin{scriptsize}$$2W_7=\Span\{y\alpha_4\gamma^2,x\alpha_4\gamma^2\}\oplus \Span\{y\beta_4\gamma^2,-x\beta_4\gamma^2\},$$\end{scriptsize}
which is excluded as its vectors have common factors.
It follows from \eqref{key decomposition for D4} that {\small$3W_6\otimes \mathbb{C}^2=3W_1\oplus 3W_2$}, where 
{\scriptsize \begin{align*}
3W_1=& \Span\{y\gamma^4\otimes e_2-(-x)\gamma^4\otimes e_1,~y\alpha_4^2\otimes e_2-(-x)\alpha_4^2\otimes e_1,~\alpha_4\gamma\frac{\partial \beta_4}{\partial x}\otimes e_2-\alpha_4\gamma\frac{\partial \beta_4}{\partial y}\otimes e_1\},\\
3W_2=& \Span\{y\gamma^4\otimes e_2+(-x)\gamma^4\otimes e_1,~y\alpha_4^2\otimes e_2+(-x)\alpha_4^2\otimes e_1,~\alpha_4\gamma\frac{\partial \beta_4}{\partial x}\otimes e_2+\alpha_4\gamma\frac{\partial \beta_4}{\partial y}\otimes e_1\}.
\end{align*}}
\!\!Since $3W_2$ differs from $3W_1$ only by a coordinate change from $y$ to $-y$, we focus our analysis on the space $3W_1$. A plane $g(x,y)$ in $3W_1$ is spanned by the two rows of the matrix \eqref{deg 9 D4 case}. 
Note that 
\begin{small}$$\det g=4\alpha_4\beta_4\gamma(p_{13}\alpha_4^2+p_{23}\gamma^4).$$\end{small} By Lemma \ref{linear system of D2}, we know that $\mathcal{Q}$ equals one of {\small$3(\alpha_4)+(\beta_4)+(\gamma),~(\alpha_4)+3(\beta_4)+(\gamma),~(\alpha_4)+(\beta_4)+5(\gamma)$}, and the generic case {\small$(\alpha_4)+(\beta_4)+(\gamma)+\varphi^{-1}(\varphi(t))$}, where {\small$\varphi^{-1}(\varphi(t))$} consists of $8$ distinct points constituting the principal orbit of $D_4$ through the point $t$, where $t$ is a solution of {\small$p_{13}\alpha_4^2+p_{23}\gamma^4=0$} with {\small$p_{13}p_{23}(p_{13}-p_{23})\neq 0$}.
We only deal with the generic case (the other three cases are its limits). 
The sextic curves $F$ are computed by {\scriptsize$g\cdot uv(u^4-v^4)/xy(x^4-y^4)$} and invariant theory. It is tangent to the $1$-dimensional orbit at $F(\varphi(t))$. 

\textbf{Case (2)}: Assume that $\deg g=11$. Then $\deg \mathcal{F}=18$ and {\small$\mathcal{F}\geq (0)+(\infty)+(1)+(i)+(-1)+(-i)$}. 
Through $\phi_1$, we have $\rchi_{V_{11}}=3\psi_6+3\psi_7$, where (constructed as Case (1)) 
\begin{scriptsize}$$\aligned &3W_7=\Span\{y\alpha_4\gamma^3,-x\alpha_4\gamma^3\}\oplus \Span\{y\beta_4\gamma^3,x\beta_4\gamma^3\}\oplus \Span\{\alpha_4\beta_4\frac{\partial \alpha_4}{\partial x},\alpha_4\beta_4\frac{\partial \alpha_4}{\partial y}\},\\
&3W_6=\Span\{y\alpha_4\beta_4\gamma,-x\alpha_4\beta_4\gamma\}\oplus \Span\{y\gamma^5,x\gamma^5\}\oplus \Span\{y\alpha_4^2\gamma,x\alpha_4^2\gamma\}.\endaligned$$\end{scriptsize}
We rule out $3W_6$ as vectors in it have common factors. It follows from \eqref{key decomposition for D4} that $3W_7\otimes \mathbb{C}^2=3W_3\oplus 3W_4$, where

{\scriptsize \begin{align*}
3W_3:=& \Span\{y\alpha_4\gamma^3\otimes e_2-(-x)\alpha_4\gamma^3\otimes e_1,y\beta_4\gamma^3\otimes e_2-x\beta_4\gamma^3\otimes e_1,\alpha_4\beta_4\frac{\partial \alpha_4}{\partial x}\otimes e_2-\alpha_4\beta_4\frac{\partial \alpha_4}{\partial y}\otimes e_1\},\\
3W_4:=& \Span\{y\alpha_4\gamma^3\otimes e_2+(-x)\alpha_4\gamma^3\otimes e_1,y\beta_4\gamma^3\otimes e_2+x\beta_4\gamma^3\otimes e_1,\alpha_4\beta_4\frac{\partial \alpha_4}{\partial x}\otimes e_2+\alpha_4\beta_4\frac{\partial \alpha_4}{\partial y}\otimes e_1\}.
\end{align*}}
\!\!Since $3W_3$ differs from $3W_4$ only by a coordinate change from  $y$ to $-y$, we focus our analysis on the space $3W_4$. A plane $g(x,y)$ in $3W_4$ is spanned by the two rows of the matrix \eqref{deg 11 D4 case}. 
It follows that 
\begin{small}$$\det g=2\alpha_4\beta_4\gamma^3(2(p_{23}-p_{13})\alpha_4^2+(2p_{23}-p_{12})\gamma^4).$$\end{small}
By Lemma \ref{linear system of D2} and zeros of $\beta_4$ do not lie on the free lines, we know that $\mathcal{Q}$ equals one of {\small$3\alpha_4+\beta_4+3\gamma,~\alpha_4+\beta_4+7\gamma$}, and the generic case {\small$\alpha_4+\beta_4+3\gamma+\varphi^{-1}(\varphi(t))$}, where {\small$\varphi^{-1}(\varphi(t))$} consists of $8$ distinct points constituting the principal orbit of $D_4$ through the point $t$.

We only deal with the generic case. Then {\small$\mathcal{F}=\alpha_4+3\gamma+\varphi^{-1}(\varphi(t))$}, where $t$ is a solution of {\small$2(p_{23}-p_{13})\alpha_4^2+(2p_{23}-p_{12})\gamma^4=0$}. 
The action of multiplying {\small$T_{1,1}^T$} on the right gives a permutation $(3654)$, while the action of multiplying {\small$T_{2,2}^T$} on the right gives a permutation $(12)(46)$ on the free lines. Therefore, 
we may assume that $g(t)$ lies on ${\mathcal L}_1$ or ${\mathcal L}_3$. In both cases, the curves $F$ are computed by $\frac{g\cdot uv(u^4-v^4)}{x^3y^3(x^4-y^4)(x^4-t^4y^4)(x^4-t^{-4}y^4)}$ and invariant theory.

If $g(t)$ lies on ${\mathcal L}_1$, then after making the normalization  $a_2=b_3=1,~a_3=b_2=0$, we can solve $a_1$ and $b_1$ as rational functions of $t$ from the perpendicular condition arising from \eqref{eq-L12}. A direct computation shows that $F$ sits in ${\mathbb P}^5$, to be ruled out. 
Actually, $F=[f_0:\cdots:f_6]$ 
satisfies a linear equation 
\begin{scriptsize}$$\frac{\sqrt{6}}{3(t^4-1)}f_0+f_1+\frac{\sqrt{10}(t^4-1)}{5}f_2+\frac{\sqrt{30}}{30}(t^4-1)^2f_3=0.$$\end{scriptsize}
By letting $t\rightarrow 0$, the non-generic case that {\small$\mathcal{Q}=\alpha_4+\beta_4+7\gamma$} is also ruled out.

If $g(t)$ lies on ${\mathcal L}_3$, then we can obtain the perpendicular condition from \eqref{eq-L3}. 
$F$ belongs to the exceptional transversal family by Theorem \ref{thm}. The non-generic case that {\small$\mathcal{Q}=3\alpha_4+\beta_4+3\gamma$} 
is obtained by letting $t\rightarrow 1$ 
in this generic case.  
\end{proof}

\section{$A_4$ of order $12$}

In \eqref{diagram3}, assume that $\deg \varphi=12$ and $G=A_4$. 
Recall that the invariants of $A_4$ are generated by the three fundamental invariants $\Phi, \Psi, \Omega$ with the relation given in Table \ref{invariants and relation}.
Up to M\"obius transformations, by Table \ref{Rational Galois Coverings}, $\varphi:\mathbb{P}^1\rightarrow \mathbb{P}^1$ is given by
 $$\varphi:w\longmapsto z=\Phi^3/\Psi^3=(\frac{w^4-2\sqrt{-3}w^2+1}{w^4+2\sqrt{-3}w^2+1})^3.$$ 
 The associated Galois group of $\varphi$ is generated by 
{\small\begin{equation}\label{A4GaloisGroup}
\sigma: \mathbb{P}^1\rightarrow \mathbb{P}^1,\quad\quad w\mapsto i(w+1)/(w-1);\phantom{1111}\tau: \mathbb{P}^1\rightarrow \mathbb{P}^1,\quad\quad w\mapsto -w.
\end{equation}}
\!\!The ramified points of $\varphi$ are three pairs of zeros of $\Phi,~\Psi$ and $\Omega$ with multiplicities $3,3$ and $2$, respectively. 
They are mapped to $0,\infty$ and $1$, respectively, under $\varphi$. 

On the left-hand side of \eqref{eigenspace problem}, in correspondence to \eqref{A4GaloisGroup}, we select $A:=T_{4,1}$ and $B:=T_{1,2}$ as double coverings of 
$\sigma$ and $\tau$, respectively. 
Note that they generate a subgroup \begin{small}$$G^*\triangleq\{T_{4,1}^k,T_{1,2}T_{4,1}^k,T_{4,1}T_{1,2}T_{4,1}^k,T_{4,1}^2T_{1,2}T_{4,1}^k,~0\leq k\leq 7\},$$\end{small} of $SU(2)$ isomorphic to $SL(2,{\mathbb Z}_3)$. It has $7$ conjugacy classes 
{\scriptsize\begin{align*}
& \{\Id\},\{-\Id\},\quad\{\pm T_{1,2},\pm T_{2,0},\pm T_{2,2}\},\quad\{-T_{3,1},T_{3,3},T_{5,1},-T_{5,3}\}\\
& \{-T_{4,1},-T_{4,3},T_{6,1},T_{6,3}\},\quad \{T_{3,1},-T_{3,3},-T_{5,1},T_{5,3}\},\quad\{T_{4,1},T_{4,3},-T_{6,1},-T_{6,3}\};
\end{align*}}
\!\!hence $G$ has $7$ irreducible representations (where $\eta=\exp(\frac{i\pi}{3})$).
{\scriptsize\begin{table}[H]
\begin{center}
\begin{tabular}{|c|c|c|c|c|c|c|c|c|}
\hline
         &  $\Id$ & $-\Id$ & $\{\pm T_{1,2},\ldots\}$ & $\{-T_{3,1},\ldots\}$ & $\{-T_{4,1},\ldots\}$ & $\{T_{3,1},\ldots\}$ & $\{T_{4,1},\ldots\}$ \\ \hline
$\rchi_0$ &  $1$   & $1$    & $1$               & $1$               & $1$         & $1$    & $1$  \\ \hline
$\rchi_1$ &  $1$   & $1$    & $1$               & $\eta^4$               & $\eta^2$ & $\eta^4$ & $\eta^2$               \\ \hline
$\rchi_2$ &  $1$   & $1$    & $1$               & $\eta^2$               & $\eta^4$      & $\eta^2$ & $\eta^4$         \\ \hline
$\rchi_3$ &  $3$   & $3$    & $-1$               & $0$               & $0$ & $0$ & $0$              \\ \hline
$\rchi_4$     & $2$   & $-2$    & $0$               & $-1$               & $-1$       & $1$  & $1$      \\ \hline
$\rchi_5$     & $2$   & $-2$    & $0$               & $-\eta^4$               & $-\eta^2$       & $\eta^4$   & $\eta^2$     \\ \hline
$\rchi_6$     & $2$   & $-2$    & $0$               & $-\eta^2$               & $-\eta^4$       & $\eta^2$   & $\eta^4$     \\ \hline
\end{tabular}
\end{center}
\end{table}}
The first four irreducible representations are obtained by $G^*\rightarrow G^*/\{\pm\Id\}=A_4$. Note that $\chi_0,\chi_1$ and $\chi_2$ also arise from 
the action of $G^*$ on the invariants $\Omega, \Psi$ and $\Phi$, respectively. The representation $\rchi_4$ is obtained by multiplications of $G^*$ on the column vectors of $\mathbb{C}^2$ (also equal to $\rchi_{V_1}$). The representations $\rchi_5$ (respectively, $\rchi_6$) is obtained by the tensor product of $\rchi_4$ with $\rchi_1$ (respectively, $\rchi_2$). Under the natural basis $\{e_1,e_2\}$ of $\mathbb{C}^2$, we realize $\rchi_5$ (respectively, $\rchi_6$) as the character of the natural action on the column vectors given by $\rchi_1(A)A,~\forall A\in G^*$ (respectively, $\rchi_2(A)A$). 
Immediately, we obtain that 
{\scriptsize \begin{align}\label{mult table for A4 binary}
\begin{split}
\rchi_0\cdot \rchi_4&=\rchi_4,\quad\rchi_1\cdot\rchi_4=\rchi_5,\quad\rchi_2\cdot\rchi_4=\rchi_6,\\
\rchi_3\cdot\rchi_4&=\rchi_4+\rchi_5+\rchi_6,\quad\rchi_4\cdot\rchi_4=\rchi_0+\rchi_3,~\rchi_5\cdot\rchi_4=\rchi_1+\rchi_3,\quad\rchi_6\cdot\rchi_4=\rchi_2+\rchi_3, 
\end{split}
\end{align}}
\!\!where the unit representation in $\rchi_4\cdot\rchi_4$ is given by $\Span\{e_1\otimes e_2-e_2\otimes e_1\}$.  

On the right-hand side, there is a unique 
{\small$A_4\triangleq\langle (14)(23),(13)(24),(243)\rangle$} in $S_4$.
Note that $G^*$ is the only group in $S_4^*$ double covering $A_4$.

\begin{lemma}\label{unique isom of A4}
Up to conjugation 
by elements in $S_4^*$, 
$\Id$ is the only automorphism of $G^*$. 
\end{lemma}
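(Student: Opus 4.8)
The plan is to exploit the fact that $G^\ast$, being the binary tetrahedral group $2T\cong SL(2,\mathbb{Z}_3)$, is the \emph{unique} index-two subgroup of $S_4^\ast=2O$, and to show that the conjugation action of $S_4^\ast$ on $G^\ast$ already realizes \emph{every} automorphism of $G^\ast$. Granting this, any isomorphism $\phi_2:G^\ast\to G^\ast$ has the form $x\mapsto gxg^{-1}$ for some $g\in S_4^\ast$, so post-composing with conjugation by $g^{-1}$ turns $\phi_2$ into $\Id$; this is precisely the assertion that, up to conjugation by $S_4^\ast$, the identity is the only automorphism. In contrast to the dihedral cases, where the relevant subgroup of $S_4^\ast$ is not normal and a nontrivial outer automorphism survives, here normality is exactly what collapses $\mathrm{Out}(G^\ast)$ to a single class.

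First I would record the structural input. Since $A_4\triangleleft S_4$ and $2O\to S_4$ is a homomorphism (equivalently, since $[S_4^\ast:G^\ast]=2$), we have $G^\ast\triangleleft S_4^\ast$; hence conjugation by any $g\in S_4^\ast$ preserves $G^\ast$ and defines a homomorphism $c:S_4^\ast\to\Aut(G^\ast)$ by $c(g)(x)=gxg^{-1}$. Next I would compute the kernel: $\ker c=C_{S_4^\ast}(G^\ast)$, and because $G^\ast$ acts irreducibly on $V_1\cong\mathbb{C}^2$, Schur's lemma forces any element centralizing $G^\ast$ to be scalar; the only scalars in $S_4^\ast\subset SU(2)$ are $\pm\Id$, so $\ker c=\{\pm\Id\}$. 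Consequently the image of $c$ has order $|S_4^\ast|/2=48/2=24$. Moreover $\mathrm{im}\,c$ strictly contains $\mathrm{Inn}(G^\ast)$ (which has order $12$): for $g\in S_4^\ast\setminus G^\ast$, an equality $c(g)=c(h)$ with $h\in G^\ast$ would give $gh^{-1}\in\ker c=\{\pm\Id\}\subset G^\ast$, forcing $g\in G^\ast$; thus the outer coset is realized by any such $g$.

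Then I would compare $|\mathrm{im}\,c|=24$ with $|\Aut(G^\ast)|$. Invoking the standard identification $\Aut(SL(2,\mathbb{Z}_3))\cong S_4$ gives $|\Aut(G^\ast)|=24$ and $\mathrm{Out}(G^\ast)\cong\mathbb{Z}_2$, so the order count forces $\mathrm{im}\,c=\Aut(G^\ast)$, i.e. $c$ is surjective. This surjectivity is the crux of the argument, and I expect verifying $|\Aut(G^\ast)|=24$ to be the main obstacle: the cleanest route is simply to cite $\Aut(SL(2,\mathbb{Z}_3))\cong S_4$, while a self-contained argument would have to use that the quaternion Sylow-$2$ subgroup $Q_8$ is characteristic in $G^\ast$ and bound $|\Aut(G^\ast)|$ through the induced action on $Q_8$ and on $G^\ast/Q_8\cong\mathbb{Z}_3$. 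With surjectivity in hand, the conclusion is immediate as in the first paragraph: every automorphism of $G^\ast$ equals $c(g)$ for some $g\in S_4^\ast$, and conjugating by $g^{-1}$ identifies it with $\Id$.
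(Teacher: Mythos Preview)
Your proposal is correct and follows essentially the same approach as the paper: both use that $G^\ast=A_4^\ast$ is normal of index $2$ in $S_4^\ast$, that $\mathrm{Out}(A_4^\ast)\cong\mathbb{Z}_2$, and that conjugation by an element of $S_4^\ast\setminus G^\ast$ realizes the nontrivial outer class. Your version is more thorough in justifying the details (the Schur-lemma computation of $\ker c$ and the explicit order count), whereas the paper simply cites $\mathrm{Out}(A_4^\ast)\cong\mathbb{Z}_2$ and asserts that the outer coset induces an outer automorphism.
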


\begin{proof} It is known that the outer automorphism group of $A_4^*$ is ${\mathbb Z}_2$. Since $G^*=A_4^*$ is normal in $S_4^*$ of index 2, any element $g\in S_4^*\setminus G^*$ induces an outer automorphism via the conjugation $\theta:h\mapsto gkg^{-1}, \forall k\in G^*$. 
This means geometrically that upon a $SL_2$ coordinate change of ${\mathbb C}^2$, we may assume $\theta$ is the identity map.
\end{proof}

\begin{theorem}
In the case $A_4$, let $g([x,y]):\mathbb{P}^1\rightarrow \mathbb{P}^3$ be a holomorphic curve satisfying diagram \eqref{diagram3}. Then up to M\"obius transformations on $\mathbb{P}^1$ and the $PSL_2$-equivalence, we have $\deg g =13$ or $15$. Moreover, the following classification holds. 
\begin{enumerate}
\item[\rm{(1)}] If $\deg g=13$, then 
{\small\begin{equation}\label{deg 13 A4 case}
g=\begin{pmatrix}
a_1 & a_2 & a_3\\
b_1 & b_2 & b_3
\end{pmatrix}\begin{pmatrix}
x\Omega^2 & x\Phi^3  & -\Omega \Phi \frac{\partial \Psi}{\partial y}\\
y\Omega^2 & y \Phi^3 & \Omega\Phi\frac{\partial \Psi}{\partial x}
\end{pmatrix}^T.
\end{equation}}
\!\!This class depends on two freedoms and belongs to the generally ramified family.

\item[\rm{(2)}] If $\deg g=15$, then
{\small\begin{equation}\label{deg 15 A4 case}
g=\begin{pmatrix}
a_1 & a_2 & a_3 \\
b_1 & b_2 & b_3
\end{pmatrix}\begin{pmatrix}
x\Omega\Phi^2 & -\Omega^2\frac{\partial \Psi}{\partial y} & -\Phi^3\frac{\partial \Psi}{\partial y} \\
y\Omega \Phi^2 & \Omega^2\frac{\partial \Psi}{\partial x} & \Phi^3 \frac{\partial \Psi}{\partial x}
\end{pmatrix}^T,
\end{equation}}
\!\!where {\scriptsize$\begin{pmatrix}
a_1 & a_2 & a_3 \\
b_1 & b_2 & b_3
\end{pmatrix}\in G(2,3)$} is the unique plane which is perpendicular to 
\begin{small}\begin{equation}\label{perp condition deg 15 A4 case}
\quad((t^4-1)(2\sqrt{-3}t^2-t^4-1)^2,4t^2(t^4-1)^2(\sqrt{-3}+t^2),-4(2\sqrt{-3}t^2-t^4-1)^3(\sqrt{-3}+t^2)),
\end{equation}
\end{small}
\!\!under the bilinear inner product. This class depends on one freedom $t$ and belongs to the generally ramified family.
\end{enumerate}
\end{theorem}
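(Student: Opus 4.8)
My plan follows the template of the dihedral sections, now for $G^\ast=A_4^\ast$. First I would pin down $\deg g$. Corollary \ref{Cor4.1} gives $12=\deg\varphi\le\deg g\le 18$. To sharpen this I locate the ramification of $\varphi=\Phi^3/\Psi^3$ on the octahedron: the zeros of $\Omega$ are precisely the six $\mathrm w$-values $\{0,\infty,\pm1,\pm i\}$ of the first factor of \eqref{26}, i.e. the vertices lying on the free lines, each with $\mult_p(\varphi)=2$, while the four zeros of $\Phi$ and the four of $\Psi$ are the eight face centers (roots of $\mathrm w^8+14\mathrm w^4+1$), which do not lie on free lines and have $\mult_p(\varphi)=3$. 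At any ramification point the stabilizer lifts to an element of $A_4^\ast$ with two distinct eigenvalues, so the rows of $g(p)$ are eigenvectors for one common eigenvalue and $\det g(p)=0$; hence all fourteen ramification points lie in $\supp\mathcal Q$. Proposition \ref{charc on 2dim orbit} then places each of the six vertices in $\supp\mathcal F$, so $\deg\mathcal F\ge 6$ and $\deg g\ge 13$ by \eqref{degree-difference}, while the eight face centers lie in $\supp\mathcal Q\setminus\supp\mathcal F$, giving $\deg\mathcal Q-\deg\mathcal F=72-4\deg g\ge 8$, i.e. $\deg g\le 16$. Since $A_4^\ast$ appears in Proposition \ref{poincareEven}, its degree-one irreducibles occur only in even $V_{2m}$, and because $V_n\otimes\mathbb C^2\cong V_{n+1}\oplus V_{n-1}$ carries such a summand only for odd $n$, the even values $14,16$ are excluded; thus $\deg g=13$ or $15$.

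For the group data I would set $\phi_1=\phi_2=\Id$, justified by Lemma \ref{unique isom of A4}. Instead of the projection formula I build the needed isotypic pieces from the fundamental invariants $\Phi,\Psi,\Omega$ (of characters $\rchi_2,\rchi_1,\rchi_0$) via Proposition \ref{key basis prop}: the vector $x\otimes e_1+y\otimes e_2$ carries the trivial character, $-\Psi_y\otimes e_1+\Psi_x\otimes e_2$ (with $\Psi_x,\Psi_y$ the partials of $\Psi$) carries $\rchi_1$, and multiplication by an invariant multiplies characters. For $\deg g=13$ this yields the three independent trivial-character vectors $\Omega^2(x,y)$, $\Phi^3(x,y)$, $\Omega\Phi(-\Psi_y,\Psi_x)$ spanning a $3\rchi_0$ inside $V_{13}\otimes\mathbb C^2$, matching \eqref{deg 13 A4 case}; for $\deg g=15$ it yields $\Omega\Phi^2(x,y)$, $\Omega^2(-\Psi_y,\Psi_x)$, $\Phi^3(-\Psi_y,\Psi_x)$, all of character $\rchi_1$, spanning a $3\rchi_1$, matching \eqref{deg 15 A4 case}. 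Using the multiplication rules \eqref{mult table for A4 binary} (or the Poincar\'e series) I would check that these are the only summands of multiplicity $\ge 2$ up to the relevant degree, and that the remaining degree-one characters either fail to reach multiplicity two or yield vectors sharing a common invariant factor (whence $\deg g$ drops), and so are discarded.

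The heart is the determinant-and-divisor analysis. By Cauchy--Binet on the $2\times3$ basis matrix, the two minors built with $x\otimes e_1+y\otimes e_2$ reduce through Euler's identity $x\Psi_x+y\Psi_y=4\Psi$ to multiples of $\Omega\Phi\Psi$ times a power of an invariant, while the third minor vanishes; this gives $\det g=4\Omega\Phi\Psi(p_{13}\Omega^2+p_{23}\Phi^3)$ for $\deg g=13$ and $\det g=4\Omega\Phi^2\Psi(p_{12}\Omega^2+p_{13}\Phi^3)$ for $\deg g=15$, with $p_{ij}$ the Pl\"ucker coordinates in $G(2,3)$. Rewriting the last factor via the syzygy of Table \ref{invariants and relation} as a combination of $\Phi^3$ and $\Psi^3$ identifies its zeros as a principal orbit $\varphi^{-1}(\varphi(t))$. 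For $\deg g=13$ no constraint is needed: $\mathcal F=(\Omega)$ has degree $6$, the twelve orbit points lie in $\supp\mathcal Q\setminus\supp\mathcal F$ with $\mult_p(\varphi)=1$, so condition (1) of Theorem \ref{thm} makes $F$ generally ramified, and the plane ranges over $G(2,3)$, a two-parameter family. For $\deg g=15$ the forced value $\deg\mathcal F=18$ cannot be supported by the six vertices alone (each gives order one, since $\Omega$ is simple in $\det g$), so the principal orbit must enter $\mathcal F$, i.e. $g(t)$ must lie on a free line; normalizing via the permutation action of $T_{3,1}^T,T_{2,1}^T$ on the free lines to $g(t)\in\mathcal L_1$ produces by \eqref{eq-L12} the single perpendicularity \eqref{perp condition deg 15 A4 case}, hence one parameter $t$, while the alternative normalization I expect to collapse $F$ into a $\mathbb P^5$ and be discarded, as in the dihedral cases. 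Here $\Phi$ is squared in $\det g$, so the four $\Phi$-face centers sit in $\supp\mathcal Q\setminus\supp\mathcal F$ with $\ord_p(\mathcal Q)=2$, and condition (1) of Theorem \ref{thm} again gives generally ramified; the non-generic divisor is recovered as the limit $t\to 0$.

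I expect the main difficulty to be computational and twofold: confirming that the invariant-built vectors exhaust the multiplicity-$\ge 2$ isotypic components and that every competing character degenerates through a common factor, which rests on the bookkeeping in \eqref{mult table for A4 binary}; and carrying out the explicit determinant computation together with the verification that the discarded free-line normalization truly forces $F$ into a hyperplane $\mathbb P^5$, since establishing linear fullness of the sextic $F$ downstairs (or its failure) is the heaviest symbolic step. The conceptual scaffolding --- the degree bounds, the reduction to $\phi_1=\phi_2=\Id$, and the Cauchy--Binet factorization of $\det g$ --- should otherwise be routine given the machinery already assembled.
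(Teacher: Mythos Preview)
Your approach matches the paper's and your determinant formulas are correct (they agree with the paper's after the syzygy of Table~\ref{invariants and relation} is used to rewrite $\Omega^2$ in terms of $\Phi^3,\Psi^3$). Two corrections are needed, however, both specific to the $A_4$ geometry.

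First, the right-hand generators acting on the free lines are $T_{4,1}^T$ and $T_{1,2}^T$ (the $A_4^\ast$ generators chosen at the start of the section), not $T_{3,1}^T, T_{2,1}^T$ as you write --- those belong to the $D_3$ case. More importantly, $T_{4,1}^T$ induces the $3$-cycle $(136)(254)$ and $T_{1,2}^T$ the product $(35)(46)$, and together these generate a \emph{transitive} action of $A_4$ on the six free lines. Hence in the $\deg g=15$ case there is a single orbit: once you put $g(t)\in\mathcal L_1$ you are done, and there is no ``alternative normalization'' to discard as in the $D_2$ or $D_4$ sections. Your expectation of a degenerate $\mathbb P^5$ case here is misplaced.

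Second, for $\deg g=15$ you have accounted for the summand $3\rchi_1\subset V_{15}\otimes\mathbb C^2$ (arising from $3W_5\subset V_{15}$, built with $\partial\Psi$), but there is an equally large summand $3\rchi_2$ (arising from $3W_6$, built with $\partial\Phi$) that also has multiplicity $3$ and no common factor. It cannot be dismissed by the mechanisms you list; rather, the paper shows that a plane $\tilde g$ in $3W_2$ is $PSL_2$-equivalent to one in $3W_1$ via the explicit relation $g(x,y)=\tilde g(ix,-y)\,T_{1,1}$, so the two pieces give the same curves downstairs. You should add this equivalence to your bookkeeping.
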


\begin{proof}
We choose $\phi_2$ to be the identity stated in the Lemma \ref{four isoms of D4}. 
 It follows from  Corollary \ref{Cor4.1} that $12\leq \deg(g)\leq 18$.
Since the ramified multiplicities of the zeros of $\Phi\Psi$ are all $3$, they are mapped to the center of faces of octahedron via $g$. Since $0$, one of the zeros of $\Omega$, is invariant under $\tau:w\rightarrow -w$, $g(0)$ lies on the free lines preserved by $T_{1,2}$. 
Hence, the zeros of $\Omega$ are all mapped to the vertices. Therefore, $\deg \mathcal{F}\geq 6$, and 
\[8\leq \deg \mathcal{Q}-\deg \mathcal{F}=2\deg g-6(\deg g-12)=72-4\deg g
.\] 
Thus $13\leq \deg g\leq 16$. 
By Proposition \ref{poincareEven}, we can further rule out the cases $\deg g=14,16$.

\textbf{Case (1)}: Assume that $\deg g=13$. Then $\deg \mathcal{F}=6$ and $\mathcal{F}=(\Omega)$. 
Through $\phi_1$, we have $\rchi_{V_{13}}=3\rchi_4+2\rchi_5+2\rchi_6$,  where 
\begin{scriptsize}$$3W_4=\Span\{y\Omega^2,-x\Omega^2\}\oplus \Span\{y\Phi^3,-x\Phi^3\}\oplus \Span\{\Omega\Phi\frac{\partial \Psi}{\partial x},\Omega\Phi\frac{\partial \Psi}{\partial y}\},$$\end{scriptsize}
is constructed by invariants $\Omega^2,~\Phi^3,~\Omega\Phi$ of degree $12,~12,~10$ with characters $\rchi_0,~\rchi_0$ and $\rchi_2$, respectively ((1), (2) in Proposition \ref{key basis prop} are also used). They are equivalent to $\mathbb{C}^2$ with character $\rchi_4$ when the first polynomial maps to $e_1$ and the second maps to $e_2$. 
Similarly, 
\begin{scriptsize}$$\aligned 2W_5=\Span\{y\Phi\Psi^2,-x\Phi\Psi^2\}\oplus \Span\{\Omega\Phi\frac{\partial \Phi}{\partial x},\Omega\Phi\frac{\partial \Phi}{\partial y}\},\quad2W_6=\Span\{y\Phi^2\Psi,-x\Phi^2\Psi\}\oplus \Span\{\Omega\Psi\frac{\partial \Psi}{\partial x},\Omega\Psi\frac{\partial \Psi}{\partial y}\},\endaligned$$\end{scriptsize}
both of which are excluded as their vectors have common factors.
It follows from \eqref{mult table for A4 binary} that {\small$3W_4\otimes \mathbb{C}^2=3W_0\oplus 3W_3$}, where only 
{\scriptsize \begin{align*}\label{direct sum decomposition A4 deg 13}
\begin{split}
3W_0=& \Span\{y\Omega^2\otimes e_2-(-x)\Omega^2\otimes e_1,~y\Phi^3\otimes e_2-(-x)\Phi^3\otimes e_1,~\Omega\Phi\frac{\partial \Psi}{\partial x}\otimes e_2- \Omega\Phi\frac{\partial \Psi}{\partial y}\otimes e_1\}
\end{split}
\end{align*}}
\!\!needs to be considered. A plane $g(x,y)$ in $3W_0$ is spanned by the two rows of the matrix \eqref{deg 13 A4 case} with
\begin{small}$$\det g=4\Omega\Phi\Psi((\frac{-1}{12\sqrt{-3}}p_{13}+p_{23})\Phi^3+\frac{p_{13}}{12\sqrt{-3}}\Psi^3).$$\end{small}
It follows from Lemma \ref{linear system of D2} that {\small$\mathcal{Q}=3(\Omega)+(\Phi)+(\Psi)$}, or {\small$(\Omega)+4(\Phi)+(\Psi)$}, or {\small$(\Omega)+(\Phi)+4(\Psi)$}, or the generic case {\small$(\Omega)+(\Phi)+(\Psi)+\varphi^{-1}(\varphi(t))$}, where {\small$\varphi^{-1}(\varphi(t))$} consists of $12$ distinct points constituting the principal orbit of $A_4$ through the point $t$. Here, $t$ is a solution of {\small$(12\sqrt{-3}p_{23}-p_{13})\Phi^3+{p_{13}}\Psi^3=0.$} In all cases, we obtain the desired  curves 
downstairs by computing {\scriptsize$g\cdot uv(u^4-v^4)/xy(x^4-y^4)$} and using the invariant theory. It follows from Theorem~\ref{thm} that such curves always belong to the generally ramified family.  


\textbf{Case (2)}: Assume $\deg g=15$. Then $\deg \mathcal{F}=18$ and $\mathcal{F}>(\Omega)$. 
Through $\phi_1$, we have
{\small$\rchi_{V_{15}}=2\rchi_4+3\rchi_5+3\rchi_6$}, where  
\begin{scriptsize}$$\aligned&3W_5=\Span\{y\Omega\Phi^2,-x\Omega\Phi^2\}\oplus \Span\{\Omega^2\frac{\partial \Psi}{\partial x},\Omega^2\frac{\partial \Psi}{\partial y}\}\oplus\Span\{\Phi^3\frac{\partial \Psi}{\partial x},\Phi^3\frac{\partial \Psi}{\partial y}\},\\
&3W_6=\Span\{y\Omega\Psi^2,-x\Omega\Psi^2\}\oplus\Span\{\Omega^2\frac{\partial \Phi}{\partial x},\Omega^2\frac{\partial \Phi}{\partial y}\}\oplus \Span\{\Phi^3\frac{\partial \Phi}{\partial x},\Phi^3\frac{\partial \Phi}{\partial y}\},\\
&2W_4=\Span\{y\Omega\Phi\Psi,-x\Omega\Phi\Psi\}\oplus\Span\{\Phi^2\Psi\frac{\partial \Psi}{\partial x},\Phi^2\Psi\frac{\partial \Psi}{\partial y}\},\endaligned$$\end{scriptsize}
constructed as in Case (1). We rule out $2W_4$ as vectors in it have common factors. 
Tensoring $3W_3$ and $3W_6$ with $\mathbb{C}^2$, respectively, from (3) in Proposition \ref{key basis prop}, we obtain the following two invariant subspaces, 
{\scriptsize \begin{align*}
3W_1=& \Span\{y\Omega\Phi^2\otimes e_2-(-x)\Omega\Phi^2\otimes e_1,\Omega^2\frac{\partial \Psi}{\partial x}\otimes e_2-\Omega^2\frac{\partial \Psi}{\partial y}\otimes e_1,\Phi^3\frac{\partial \Psi}{\partial x}\otimes e_2-\Phi^3\frac{\partial \Psi}{\partial y}\otimes e_1\},\\
3W_2=& \Span\{y\Omega\Psi^2\otimes e_2+x\Omega\Psi^2\otimes e_1,\Omega^2\frac{\partial\Phi}{\partial x}\otimes e_2-\Omega^2\frac{\partial\Phi}{\partial y}\otimes e_1,\Phi^3\frac{\partial \Phi}{\partial x}\otimes e_2-\Phi^3\frac{\partial \Phi}{\partial y}\otimes e_1\}.
\end{align*}}
\!\!A plane $g(x,y)$ in $3W_1$ is spanned by the two rows of the matrix \eqref{deg 15 A4 case}. A plane $\widetilde{g}(x,y)$ in $3W_2$ is equivalent to $g(x,y)$ by $g(x,y)=\widetilde{g}(ix,-y)T_{1,1}$. So we use \eqref{deg 15 A4 case} in the following computation.
 Note that 
\begin{small}$$\det g=4\Omega\Phi^2\Psi((\frac{-p_{12}}{12\sqrt{-3}}+p_{13})\Phi^3+\frac{p_{12}}{12\sqrt{-3}}\Psi^3).$$\end{small}
It follows from Lemma \ref{linear system of D2} and the fact that the zeros of $\Phi$ and $\Psi$ do not lie on the free lines that {\small$\mathcal{Q}=3(\Omega)+2(\Phi)+(\Psi)$}, or the generic case {\small$\mathcal{Q}=(\Omega)+2(\Phi)+(\Psi)+\varphi^{-1}(\varphi(t))$}, where {\small$\varphi^{-1}(\varphi(t))$} consists of $12$ distinct points constituting the principal orbit of $A_4$ through the point $t$, where $t$ is a solution of  {\small$(12\sqrt{-3}p_{13}-p_{12})\Phi^3+{p_{12}}\Psi^3=0$}.

We only discuss the generic case. It follows from {\small$\mathcal{F}=(\Omega)+\varphi^{-1}(\varphi(t))$} that $g(t)$ lies on one of  the six free lines. The action of multiplying {\small$T_{4,1}^T$} on the right gives a permutation $(136)(254)$, while the action of multiplying {\small$T_{1,2}^T$} on the right gives a permutation $(35)(46)$ on the six free lines; so, $A_4$ acts on the six free lines transitively. Therefore, interchanging $t$ with another point of {\small$\varphi^{-1}(\varphi(t))$}, we may assume that $g(t)$ lies on the free line $\mathcal{L}_1$. Then we obtain the perpendicular conclusion \eqref{perp condition deg 15 A4 case} from \eqref{eq-L12}. 

The curves $F$ downstairs are computed by{\scriptsize $g\cdot uv(u^4-v^4)/xy(x^4-y^4)h(x,y)$} and invariant theory, where {\small${h(x,y):=(12\sqrt{-3}p_{13}-p_{12})\Phi^3+p_{12}\Psi^3}$} is a polynomial of degree $12$ whose zeros are all points of $\varphi^{-1}(\varphi(t))$. Moreover, it follows from Theorem~\ref{thm} that $F$ is tangential to the $1$-dimensional orbit at $z=0$.
The case {\small$\mathcal{Q}=3(\Omega)+2(\Phi)+(\Psi)$} is obtained by letting $t\rightarrow 0$. 
\end{proof}


\section{$S_4$ of order $24$}\label{sec-S4}

In \eqref{diagram3}, assume that $\deg \varphi=24$ and $G=S_4$. 
Recall that the invariants of $A_4$ are generated by the three fundamental invariants $(\Phi^3+\Psi^3)/2, \Phi\Psi, \Omega$ with the relation given in Table \ref{invariants and relation}.
Up to M\"obius transformations, by Table \ref{Rational Galois Coverings}, $\varphi:\mathbb{P}^1\rightarrow \mathbb{P}^1$ is given by
 $$\varphi:w\longmapsto z=\Phi^3\Psi^3/108\Omega^4=\frac{(w^8+14w^4+1)^3}{108(w(w^4-1))^4}.$$ The associated Galois group of $\varphi$ is generated by 
{\small\begin{equation}\label{S4GaloisGroup}
\sigma: \mathbb{P}^1\rightarrow \mathbb{P}^1,\quad\quad w\mapsto -\sqrt{-1}\cdot w;\phantom{1111}\tau: \mathbb{P}^1\rightarrow \mathbb{P}^1,\quad\quad w\mapsto -(w+1)/(w-1).
\end{equation}}
\!\!The ramified points of $\varphi$ are three pairs of zeros of $\Omega,~\Phi\Psi$ and $(\Phi^3+\Psi^3)/2$ with multiplicities $4,3$ and $2$, respectively. 
They are mapped to $\infty,0$ and $1$, respectively, under $\varphi$.

On the left-hand side of \eqref{eigenspace problem}, in correspondence to \eqref{A4GaloisGroup}, we select $A:=T_{1,1}$ and $B:=T_{3,0}$ as double coverings of 
$\sigma$ and $\tau$, respectively.   
Note that they generate the group \begin{small}$$G^*\triangleq\{T_{1,1}^k,T_{3,0}T_{1,1}^k,T_{3,0}^2T_{1,1}^k,T_{3,0}^3T_{1,1}^k,T_{1,1}T_{3,0}T_{1,1}^k,T_{1,1}T_{3,0}^3T_{1,1}^k,~0\leq k\leq 7\}=S_4^*$$\end{small} of $SU(2)$. It has $8$ conjugacy classes 
{\scriptsize\begin{align*}
& \{\Id\},\quad\{-\Id\},\quad\{-T_{3,1},T_{3,3},-T_{4,1},-T_{4,3},T_{5,1},-T_{5,3},T_{6,1},T_{6,3}\}\\
& \{\pm T_{1,2},\pm T_{2,0},\pm T_{2,2}\},\quad\{\pm T_{2,1},\pm T_{2,3},\pm T_{3,2},\pm T_{4,0}, \pm T_{5,2}, \pm T_{6,0}\},\\
&\{T_{3,1},-T_{3,3},T_{4,1},T_{4,3},-T_{5,1},T_{5,3},-T_{6,1},-T_{6,3}\}, \quad\{T_{1,1},-T_{1,3},T_{3,0},T_{4,2},-T_{5,0},-T_{6,2}\}\\
& \{-T_{1,1},T_{1,3},-T_{3,0},-T_{4,2},T_{5,0},T_{6,2}\},
\end{align*}}
\!\!with order $1,2,3,4,4,6,8,8$ respectively. Hence $G^*$ has $8$ irreducible representations.
{\scriptsize\begin{table}[H]
\begin{center}
\begin{tabular}{|c|c|c|c|c|c|c|c|c|c|}
\hline
         &  $\Id$ & $-\Id$ & $\{-T_{3,1}\}$ & $\{T_{1,2}\}$ & $\{T_{2,1}\}$ & $\{T_{3,1}\}$ & $\{T_{1,1}\}$  & $\{-T_{1,1}\}$\\ \hline
$\rchi_1$ &  $1$   & $1$    & $1$               & $1$               & $1$         & $1$    & $1$   & $1$ \\ \hline
$\rchi_2$ &  $1$   & $1$    & $1$               & $1$               & $-1$ & $1$ & $-1$ & $-1$               \\ \hline
$\rchi_3$ &  $2$   & $2$    & $-1$               & $2$               & $0$      & $-1$ & $0$ & $0$         \\ \hline
$\rchi_4$ &  $2$   & $-2$    & $-1$               & $0$               & $0$ & $1$ & $\sqrt{2}$  & $-\sqrt{2}$             \\ \hline
$\rchi_5$     & $2$   & $-2$    & $-1$               & $0$               & $0$       & $1$  & $-\sqrt{2}$ & $\sqrt{2}$    \\ \hline
$\rchi_6$     & $3$   & $3$    & $0$               & $-1$               & $1$       & $0$   & $-1$ & $-1$    \\ \hline
$\rchi_7$     & $3$   & $3$    & $0$               & $-1$               & $-1$       & $0$   & $1$ & $1$     \\ \hline
$\rchi_8$     & $4$   & $-4$    & $1$               & $0$               & $0$       & $-1$   & $0$ & $0$     \\ \hline
\end{tabular}
\end{center}
\end{table}}
The irreducible representations $\rchi_1,\rchi_2,\rchi_3,\rchi_6,\rchi_7$ are obtained by $S_4^\ast\rightarrow S_4^\ast/\{\pm\Id\}=S_4$. Note that $\chi_1$ and $\chi_2$ 
also arise from 
the action of $G^*$ on the invariants $\Phi\Psi$ and $\Omega$ (or $(\Phi^3+\Psi^3)/2$), respectively. The representation $\rchi_4$ is obtained by multiplication of $S_4^\ast$ on the column vectors of $\mathbb{C}^2$ (also equal to $\rchi_{V_1}$). The representation $\rchi_5$ is obtained by the tensor product of $\rchi_4$ with $\rchi_2$. The last representation $\rchi_8$ is realized as the action of $S_4^\ast$ on $V_{3}$. Under the natural basis $\{e_1,e_2\}$ of $\mathbb{C}^2$, we realize $\rchi_5$ as the character of the natural action on the column vectors given by $\rchi_2(A)A,~\forall A\in S_4^\ast$. 
Immediately, we obtain that 
\begin{small} \begin{align}\label{character table 2O}
\begin{split}
\rchi_1\cdot \rchi_4&=\rchi_4,\quad\rchi_2\cdot\rchi_4=\rchi_5,\quad\rchi_3\cdot\rchi_4=\rchi_8,\quad\rchi_4\cdot\rchi_4=\rchi_1+\rchi_7,\\
\rchi_5\cdot\rchi_4&=\rchi_2+\rchi_6,\quad\rchi_6\cdot\rchi_4=\rchi_5+\rchi_8,\quad\rchi_7\cdot\rchi_4=\rchi_4+\rchi_8,\quad\rchi_8\cdot\rchi_4=\rchi_3+\rchi_6+\rchi_7, 
\end{split}
\end{align}\end{small}
\!\!where the unit representation in $\rchi_4\cdot\rchi_4$ is given by $\Span\{e_1\otimes e_2-e_2\otimes e_1\}$. 

On the right hand side, we have the whole group $S_4$ double covered only by $S_4^\ast$ in $SU(2)$.

\begin{lemma}\label{two isoms of S4}
Up to conjugation, there are two isomorphisms $\gamma_i$ from $S_4^\ast$ to itself. Explicitly, $\gamma_1=\Id$, while $\gamma_2$ is determined by $\gamma_2(T_{1,1})=-T_{1,1}$ and $\gamma_2(T_{3,0})=-T_{3,0}$. Meanwhile, if $T$ lies in the conjugacy classes $\{T_{2,1}\},~\{T_{1,1}\},$ or $\{-T_{1,1}\}$, then $\gamma_2(T)=-T$; otherwise, $\gamma_2(T)=T$.
\end{lemma}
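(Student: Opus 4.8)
The plan is to prove that the outer automorphism group of $S_4^\ast$ has order $2$ and to pin down the nontrivial class explicitly, matching it against the conjugacy-class list and character table displayed above. First I would observe that $\{\pm\Id\}$ is exactly the center of $S_4^\ast$, hence a characteristic subgroup, so every automorphism $\gamma$ of $S_4^\ast$ descends to an automorphism $\bar\gamma$ of the quotient $S_4^\ast/\{\pm\Id\}=S_4$. Since $\mathrm{Out}(S_4)$ is trivial, $\bar\gamma$ is conjugation by some $\bar h\in S_4$; choosing a lift $h\in S_4^\ast$ of $\bar h$ (possible because $S_4^\ast\to S_4$ is onto) and replacing $\gamma$ by the inner-conjugate automorphism obtained from $h$, I may assume $\bar\gamma=\Id$, i.e. that $\gamma$ fixes $S_4^\ast/\{\pm\Id\}$ pointwise. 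This is the key reduction: up to conjugation, every automorphism satisfies $\gamma(g)\in\{g,-g\}$ for all $g\in S_4^\ast$.

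Next I would classify these quotient-fixing automorphisms. Writing $\gamma(g)=\epsilon(g)\,g$ with $\epsilon(g)\in\{\pm\Id\}$ and using that $\{\pm\Id\}$ is central, the homomorphism property of $\gamma$ is equivalent to $\epsilon\colon S_4^\ast\to\{\pm\Id\}\cong\mathbb{Z}_2$ being a group homomorphism; conversely any such $\epsilon$ yields an automorphism, the injectivity following from the fact that $-\Id$ lies in the commutator subgroup (so $\epsilon(-\Id)=\Id$ and $g\mapsto\epsilon(g)g$ has trivial kernel). Thus the quotient-fixing automorphisms are in bijection with $\mathrm{Hom}(S_4^\ast,\mathbb{Z}_2)$. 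Since the commutator subgroup of $S_4^\ast$ is the index-$2$ binary tetrahedral group $A_4^\ast$, the abelianization is $\mathbb{Z}_2$, so there are exactly two such homomorphisms, namely the two real one-dimensional characters $\rchi_1$ (trivial) and $\rchi_2$ of the table. The first gives $\gamma_1=\Id$; the second gives $\gamma_2(g)=\rchi_2(g)\,g$.

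Finally I would read $\gamma_2$ off the table and confirm it is genuinely outer. The row of $\rchi_2$ takes the value $-1$ exactly on the classes $\{T_{2,1}\}$, $\{T_{1,1}\}$ and $\{-T_{1,1}\}$ and $+1$ elsewhere, which is precisely the prescription claimed; in particular $\gamma_2(T_{1,1})=-T_{1,1}$, and since the order-$8$ class $\{T_{1,1}\}$ contains $T_{3,0}$ (by the conjugacy-class list above) one gets $\gamma_2(T_{3,0})=-T_{3,0}$. To see $\gamma_2$ is not inner, note that an inner automorphism fixing $S_4^\ast/\{\pm\Id\}$ pointwise must be conjugation by an element whose image in $S_4$ is central, hence by an element of $\{\pm\Id\}$, hence the identity; as $\gamma_2\neq\Id$ it is outer. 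Combining with the reduction step yields exactly two isomorphisms up to conjugation, as asserted.

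The main obstacle I anticipate is not any single hard computation but making the reduction airtight: one must verify that $\{\pm\Id\}$ is genuinely the full center (so the quotient really is $S_4$ and is preserved by $\gamma$), invoke $\mathrm{Out}(S_4)=1$ and the abelianization $S_4^{\ast\,\mathrm{ab}}\cong\mathbb{Z}_2$ correctly, and check that the character $\rchi_2$ reproduces the stated signs consistently with the explicit generator images $T_{1,1}$ and $T_{3,0}$ — all of which amount to careful bookkeeping against the data already tabulated.
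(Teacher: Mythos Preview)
Your proof is correct and follows essentially the same route as the paper: both reduce via $\mathrm{Out}(S_4)=1$ to conclude $\mathrm{Out}(S_4^\ast)\cong\mathbb{Z}_2$, with your version simply filling in the details the paper leaves implicit (the identification of quotient-fixing automorphisms with $\mathrm{Hom}(S_4^\ast,\mathbb{Z}_2)$ and the read-off from $\rchi_2$). The only minor difference is in verifying that $\gamma_2$ is outer: the paper notes that precomposing the standard $2$-dimensional representation with $\gamma_2$ changes its character (from $\rchi_4$ to $\rchi_5$), whereas you argue directly that any inner automorphism acting trivially on $S_4$ must be conjugation by a central element; both are valid.
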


\begin{proof} Since the outer automorphism group of $S_4$ is trivial, the outer automorphism group of $S_4^*$ is ${\mathbb Z}_2$ with the generator given explicitly above, which is not inner since its character is different from that of $G^*$ acting on ${\mathbb C}^2$.
\end{proof}

\begin{theorem}
In the case $S_4$, let $g([x,y]):\mathbb{P}^1\rightarrow \mathbb{P}^3$ be a holomorphic curve satisfying diagram \eqref{diagram3}. 
up to M\"obius transformations on $\mathbb{P}^1$ and the $PSL_2$-equivalence, we have $\deg g = 25,$ $29,$ or $31$. Moreover, the following classification holds.
\begin{enumerate}
\item[\rm{(1)}] If $\deg g=25$, then 
{\small\begin{equation}\label{deg 25 S4}
g=\begin{pmatrix}
a_1 & a_2 & a_3\\
b_1 & b_2 & b_3
\end{pmatrix}\begin{pmatrix}
x\Omega^4 & x(\Phi\Psi)^3 & -(\Phi^3+\Psi^3)\Phi\Psi \frac{\partial \Omega}{\partial y} \\
y \Omega^4 & y(\Phi\Psi)^3  & (\Phi^3+\Psi^3)\Phi\Psi \frac{\partial \Omega}{\partial x}
\end{pmatrix}^T.
\end{equation}}
\!\!This class depends on two freedoms and belongs to the generally ramified family.
\item[\rm{(2)}] If $\deg g=29$, then 
{\small\begin{equation}\label{s4 deg 29}
g=\begin{pmatrix}
a_1 & a_2 & a_3 \\
b_1 & b_2 & b_3
\end{pmatrix}\begin{pmatrix}
x(\Phi^3+\Psi^3)(\Phi\Psi)^2 & -\frac{\partial \Omega^5 }{\partial y} & -(\Phi\Psi)^3 \frac{\partial \Omega }{\partial y} \\
y(\Phi^3+\Psi^3)(\Phi\Psi)^2 & \frac{\partial \Omega^5 }{\partial x} & (\Phi\Psi)^3 \frac{\partial \Omega }{\partial x} 
\end{pmatrix}^T,
\end{equation}}
\!\!where {\scriptsize$\begin{pmatrix}
a_1 & a_2 & a_3 \\
b_1 & b_2 & b_3
\end{pmatrix}\in G(2,3)$} is the unique plane which is perpendicular to 
{\small\begin{equation}\label{perp s4 deg 29}{\scriptsize
(x(\Phi^3+\Psi^3)(\Phi\Psi)^2, -\frac{\partial \Omega^5 }{\partial y}, -(\Phi\Psi)^3 \frac{\partial \Omega}{\partial y})|_{(x,y)=(t,1)},}
\end{equation}}
\!\!under the bilinear inner product. This class depends on one freedom $t$ and belongs to the generally ramified family.
\item[\rm{(3)}]  If $\deg g=31$, then 
\begin{equation}\label{deg31 s4}{\small
g=\begin{pmatrix}
a_1 & a_2 & a_3 \\
b_1 & b_2 & b_3
\end{pmatrix}\begin{pmatrix}
x\Omega^3(\Phi^3+\Psi^3) & -\Omega^3\Phi\Psi \frac{\partial \Omega}{\partial y} & -(\Phi\Psi)^3\frac{\partial ( \Phi\Psi)}{\partial y}\\
y\Omega^3(\Phi^3+\Psi^3) & \Omega^3\Phi\Psi \frac{\partial \Omega}{\partial x} & (\Phi\Psi)^3\frac{\partial (\Phi\Psi)}{\partial x}
\end{pmatrix}^T,}
\end{equation}
where {\scriptsize$\begin{pmatrix}
a_1 & a_2 & a_3 \\
b_1 & b_2 & b_3
\end{pmatrix}\in G(2,3)$} is the unique plane which is perpendicular to 
{\small\begin{equation}\label{perp s4 deg 31}{\scriptsize
(y\Omega^3(\Phi^3+\Psi^3), \Omega^3\Phi\Psi \frac{\partial \Omega}{\partial x}, (\Phi\Psi)^3\frac{\partial (\Phi\Psi)}{\partial x})|_{(x,y)=(t,1)},}
\end{equation}}
\!\!under the bilinear inner product. This class depends on one freedom $t$ and belongs to the exceptional transversal family.

\end{enumerate}
\end{theorem}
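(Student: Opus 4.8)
The plan is to run the same representation-theoretic machine used for $A_4$, now with $G^\ast=S_4^\ast$ and $\phi_2=\gamma_1=\Id$ from Lemma \ref{two isoms of S4} (the second isomorphism $\gamma_2$ being parallel and, as in the earlier cases, producing an equivalent family). First I would pin down the range of $\deg g$. By Corollary \ref{Cor4.1} we have $24\le \deg g\le 36$. The zeros of $\Omega$ (degree $6$) are the vertices of the underlying octahedron and are carried by $g$ onto free lines, so they comprise $\supp\mathcal F$; since $\deg\mathcal F=6(\deg g-24)$ by \eqref{degree-difference}, this forces $\deg g\ge 25$. On the other hand the zeros of $\Phi\Psi$ (degree $8$, face centers) and of $(\Phi^3+\Psi^3)/2$ (degree $12$, edge centers) lie in $\supp\mathcal Q\setminus\supp\mathcal F$ and contribute at least $8+12=20$ to $\deg\mathcal Q-\deg\mathcal F=144-4\deg g$, so $\deg g\le 31$.

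Next I would impose parity. The Clebsch--Gordan splitting $V_n\otimes\mathbb C^2\cong V_{n+1}\oplus V_{n-1}$ together with Proposition \ref{poincareEven} shows that the one-dimensional irreducibles $\rchi_1,\rchi_2$ of $S_4^\ast$ can occur in $V_n\otimes\mathbb C^2$ only when $n$ is odd, so $\deg g\in\{25,27,29,31\}$. For each such $n$ I would decompose $V_n$ under $\phi_1$, build explicit bases of its two-dimensional summands isomorphic to $\rchi_4$ and $\rchi_5$ straight from the fundamental invariants $\Omega,\Phi\Psi,(\Phi^3+\Psi^3)/2$ using both parts of Proposition \ref{key basis prop}, and then read off the one-dimensional summands of $V_n\otimes\mathbb C^2$ via $\rchi_4\cdot\rchi_4=\rchi_1+\rchi_7$ and $\rchi_5\cdot\rchi_4=\rchi_2+\rchi_6$ from \eqref{character table 2O}; here the multiplicity of $\rchi_1$ (resp.\ $\rchi_2$) in $V_n\otimes\mathbb C^2$ equals that of $\rchi_4$ (resp.\ $\rchi_5$) in $V_n$, and any summand whose spanning forms share a common factor is discarded because it drops the degree of $g$ (Theorem \ref{main construction thm} needs $k\ge 2$). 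This is exactly how the matrices \eqref{deg 25 S4}, \eqref{s4 deg 29}, \eqref{deg31 s4} emerge, and the same bookkeeping excludes $n=27$, where every admissible $2$-plane turns out to carry a common polynomial factor and hence fails to define a degree-$27$ lift.

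With the three surviving forms in hand, I would compute $\det g$ as a polynomial in the invariants and read off $\mathcal Q$; combined with the value of $\deg\mathcal F$ this distributes $\mathcal Q$ over the vertices (each with $\ord_p\mathcal F=\deg g-24$), the edge and face centers, and, when present, a principal orbit. For $\deg g=25$ the leftover is a full $24$-point principal orbit carrying $\mult_p(\varphi)=1$, and for $\deg g=29$ the face centers acquire $\ord_p(\mathcal Q)=2$; in both situations Theorem \ref{thm} yields the generally ramified conclusion. For $\deg g=31$ the identity $\deg\mathcal Q-\deg\mathcal F=20$ is saturated exactly by the edge and face centers with $\ord_p(\mathcal Q)=1$ while $\mathcal Q=\mathcal F$ at the vertices, so none of the alternatives in Theorem \ref{thm} can occur and $F$ is exceptional transversal. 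In the one-parameter cases ($29$ and $31$) the remaining freedom is a principal-orbit point $t$; using the permutation action of right multiplication by $T_{1,1}^T$ and $T_{3,0}^T$ on the six free lines I would normalize $g(t)$ to lie on $\mathcal L_1$ or $\mathcal L_3$, translate the incidence into the perpendicularity conditions \eqref{perp s4 deg 29} and \eqref{perp s4 deg 31} through \eqref{eq-L12} and \eqref{eq-L3}, and dispose of the alternative line assignment by exhibiting a linear relation forcing $F$ into a $\mathbb P^5$, exactly as in the $D_2$ and $D_4$ arguments.

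The main obstacle is computational rather than conceptual. For $n=29$ and $n=31$ the binary forms have very large degree and $S_4^\ast$ has order $48$, so the invariant-theoretic bases, the factorization of $\det g$, and the verification that the discarded free-line assignment produces a non-full (degenerate) sextic all demand lengthy symbolic manipulation---precisely the situation flagged in the introduction, where Proposition \ref{key basis prop} is what keeps the summand structure transparent instead of forcing recourse to the projection formula \eqref{general projection formula}. The most delicate point is to confirm, at each surviving degree, that the forced vertex order $\ord_p(\mathcal F)=\deg g-24$ is compatible with the vertex ramification $\mult_p(\varphi)=4$ via Proposition \ref{Critical Prop Free lines}, and that no extra base points intrude to lower $\deg g$.
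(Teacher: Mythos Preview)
Your approach is essentially the paper's: bound $\deg g$ via Corollary \ref{Cor4.1} and the free/non-free status of the three ramification orbits, eliminate even degrees by Proposition \ref{poincareEven}, build the $\rchi_4$- and $\rchi_5$-isotypic pieces of $V_n$ from the fundamental invariants through Proposition \ref{key basis prop}, discard summands with common factors (which kills $n=27$), and then analyze $\det g$ and $\mathcal F$ case by case.

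Two points in your sketch are off and would trip you up if taken literally. First, the free divisor is \emph{not} concentrated at the vertices with $\ord_p(\mathcal F)=\deg g-24$. For $\deg g=29$ the paper finds $\mathcal F=(\Omega)+\varphi^{-1}(\varphi(t))$, so each vertex has $\ord_p(\mathcal F)=1$ (not $5$) and the remaining $24$ comes from a principal orbit landing on free lines; likewise for $\deg g=31$ one has $\mathcal F=3(\Omega)+\varphi^{-1}(\varphi(t))$ with vertex order $3$ (not $7$). This matters because the perpendicularity conditions \eqref{perp s4 deg 29} and \eqref{perp s4 deg 31} encode exactly the constraint $g(t)\in\mathcal L_1$ for a principal-orbit point $t$, and your proposed compatibility check ``$\ord_p(\mathcal F)=\deg g-24$ versus $\mult_p(\varphi)=4$'' is aimed at the wrong target. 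Second, there is no ``alternative line assignment'' to dispose of here: right multiplication by $T_{1,1}^T$ and $T_{3,0}^T$ generates a \emph{transitive} $S_4$-action on the six free lines, so after interchanging $t$ within its orbit you may take $g(t)\in\mathcal L_1$ outright and invoke \eqref{eq-L12} once; the $\mathcal L_3$ branch and the degeneration-to-$\mathbb P^5$ argument from the $D_2$ and $D_4$ sections do not recur.
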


\begin{proof} 
We choose $\phi_2$ to be $\gamma_1$ stated in the Lemma \ref{two isoms of S4}. The other choice $\gamma_2$ is similar. 

It follows from  Corollary \ref{Cor4.1} that $24\leq \deg(g)\leq 36$.
Since the ramified multiplicities of the zeros of $\Omega$ are $4$, they are mapped to the vertices of the octahedron under $g$. Since the ramified multiplicities of the zeros of $\Phi\Psi$ are all $3$, they are mapped to the center of faces of octahedron under $g$. Since $(1-i)/\sqrt{2}$, one of the zeros of $(\Phi^3+\Psi^3)/2$, is invariant under $\tau\circ\tau\circ\sigma:w\rightarrow -i/w$, we see that  $g((1-i)/\sqrt{2})$ lies in the axis of $T_{2,3}=T_{3,0}^2T_{1,1}$, a half-turn around the axis through the midpoints of opposite edges. Hence the zeros of $(\Phi^3+\Psi^3)/2$ are all mapped to the midpoints of edges. Therefore, $\deg \mathcal{F}\geq 6$, and 
\[\deg \mathcal{Q}-\deg \mathcal{F}=2\deg g-6(\deg g-24)=144-4\deg g\geq 20.\] 
Thus $25\leq \deg g\leq 31$. So, the cases $\deg g=24,$ and $32$ through $36$ do not occur. Moreover, by Proposition \ref{poincareEven}, we can also rule out the cases when $\deg g$ is even.

\textbf{Case (1)}: Assume $\deg g=25$. Then $\deg \mathcal{F}=6$ and $\mathcal{F}=(\Omega)$. 
Through $\phi_1$, we have $\rchi_{V_{25}}=3\rchi_4+2\rchi_5+4\rchi_8$,  where  
\begin{scriptsize}$$3W_4=\Span\{y\Omega^4,-x\Omega^4\}\oplus \Span\{y(\Phi\Psi)^3,-x(\Phi\Psi)^3\}\oplus\Span\{(\Phi^3+\Psi^3)\Phi\Psi \frac{\partial \Omega}{\partial x},(\Phi^3+\Psi^3)\Phi\Psi \frac{\partial \Omega}{\partial y}\},$$\end{scriptsize}
is constructed by invariants $\Omega^4,~(\Phi\Psi)^3,~(\Psi^3+\Psi^3)\Phi\Psi$ of degree $24,24,20$ with characters $\rchi_1,~\rchi_1$ and $\rchi_2$, respectively ((1), (2) in Proposition \ref{key basis prop} are also used). They are equivalent to $\mathbb{C}^2$ with character $\rchi_4$ when the first polynomial maps to $e_1$ and the second maps to $e_2$. Similarly,  \begin{scriptsize}$$2W_5=\Span\{y\Omega^2(\Phi^3+\Psi^3),-x\Omega^2(\Phi^3+\Psi^3)\}\oplus\Span\{\Omega^2\Phi\Psi\frac{\partial \Omega}{\partial x},\Omega^2\Phi\Psi\frac{\partial \Omega}{\partial y}\}.$$\end{scriptsize} 
We exclude it as its vectors have common factors.
Tensoring $3W_4$ with $\mathbb{C}^2$, from 
\eqref{character table 2O} we obtain the following invariant subspace,
{\scriptsize \[
3W_1= \Span\{y\Omega^4\otimes e_2-(-x)\Omega^4\otimes e_1,y(\Phi\Psi)^3\otimes e_2-(-x)(\Phi\Psi)^3\otimes e_1, (\Phi^3+\Psi^3)\Phi\Psi \frac{\partial \Omega}{\partial x}\otimes e_2-(\Phi^3+\Psi^3)\Phi\Psi \frac{\partial \Omega}{\partial y}\otimes e_1\}.\]}
A plane $g(x,y)$ in $3W_1$ is spanned by the two rows of the matrix \eqref{deg 25 S4} and
\begin{small}$$\det g=6\Omega(\Phi\Psi)(\Phi^3+\Psi^3)(p_{13}\Omega^4+p_{23}(\Phi\Psi)^3).$$\end{small}
It follows from Lemma \ref{linear system of D2} that {\small$\mathcal{Q}=5(\Omega)+(\Phi\Psi)+(\frac{\Phi^3+\Psi^3}{2})$}, or {\small$(\Omega)+4(\Phi\Psi)+(\frac{\Phi^3+\Psi^3}{2})$}, or {\small$(\Omega)+(\Phi\Psi)+3(\frac{\Phi^3+\Psi^3}{2})$}, or the generic case {\small$(\Omega)+(\Phi\Psi)+(\frac{\Phi^3+\Psi^3}{2})+\varphi^{-1}(\varphi(t))$}, where {\small$\varphi^{-1}(\varphi(t))$} consists of $24$ distinct points constituting the principal orbit of $S_4$ through the point $t$. In all the above cases, we obtain the desired  curves 
downstairs by computing {\scriptsize$g\cdot uv(u^4-v^4)/xy(x^4-y^4)$} and using the invariant theory. It follows from Theorem~\ref{thm} that such curves always belong to the generally ramified family. 

\textbf{Case (2)}: Assume $\deg g=27$. Through $\phi_1$, we have 
{\small$\rchi_{V_{27}}=2\rchi_4+2\rchi_5+5\rchi_8$}, where
{\scriptsize\begin{align*}
2W_4&=\Span\{y\Omega(\Phi\Psi)(\Phi^3+\Psi^3),-x\Omega(\Phi\Psi)(\Phi^3+\Psi^3)\}\oplus \Span\{\Omega(\Phi\Psi)^2\frac{\partial \Omega}{\partial x},\Omega(\Phi\Psi)^2\frac{\partial \Omega}{\partial y}\},\\
2W_5&=\Span\{y\Omega^3(\Phi\Psi),-x\Omega^3(\Phi\Psi)\}\oplus \Span\{(\Phi^3+\Psi^3)\frac{\partial (\Phi\Psi)^2}{\partial x},(\Phi^3+\Psi^3)\frac{\partial (\Phi\Psi)^2}{\partial y}\}, 
\end{align*}}
\!\!both of which are excluded as vectors in them have common factors.

\textbf{Case (3)}: Assume $\deg g=29$. Then $\deg \mathcal{F}=30$ and $\mathcal{F}>(\Omega)$. Through $\phi_1$, we have  
{\small$\rchi_{V_{29}}=2\rchi_4+3\rchi_5+5\rchi_8$}, where 
\begin{scriptsize}$$\aligned&3W_5=\Span\{y(\Phi^3+\Psi^3)(\Phi\Psi)^2,-x(\Phi^3+\Psi^3)(\Phi\Psi)^2\}\oplus \Span\{\frac{\partial \Omega^5}{\partial x}, \frac{\partial \Omega^5}{\partial y}\}\oplus \Span\{(\Phi\Psi)^3\frac{\partial \Omega}{\partial x},(\Phi\Psi)^3\frac{\partial \Omega}{\partial y}\},\\
 &2W_4=\Span\{y(\Omega\Phi\Psi)^2,-x(\Omega\Phi\Psi)^2\}\oplus\Span\{\Omega^2(\Phi^3+\Psi^3)\frac{\partial \Omega}{\partial x},\Omega^2(\Phi^3+\Psi^3)\frac{\partial \Omega}{\partial y} \}.\endaligned$$\end{scriptsize} 
 We exclude $2W_4$ as its vectors have common factors.
 Tensoring $3W_5$ with $\mathbb{C}^2$, from 
 \eqref{character table 2O} we obtain the following invariant subspace,
\begin{scriptsize}
$$3W_2= \Span\{(\Phi^3+\Psi^3)(\Phi\Psi)^2(y\otimes e_2-(-x)\otimes e_1), \frac{\partial \Omega^5}{\partial x}\otimes e_2-\frac{\partial \Omega^5}{\partial y}\otimes e_1, (\Phi\Psi)^3(\frac{\partial \Omega}{\partial x}\otimes e_2-\frac{\partial \Omega}{\partial y}\otimes e_1)\}.$$\end{scriptsize}
A plane $g(x,y)$ in $3W_2$ is spanned by the two rows of the matrix of \eqref{s4 deg 29} with
\begin{small}$$\det g=6(\Phi^3+\Psi^3)(\Phi\Psi)^2\Omega(5p_{12}\Omega^4+p_{13}(\Phi\Psi)^3).$$\end{small}
It follows from Lemma \ref{linear system of D2} and the fact that the zeros of $\Phi\Psi$ and $\frac{\Phi^3+\Psi^3}{2}$ do not lie on the free lines, that {\small$\mathcal{Q}=5(\Omega)+2(\Phi\Psi)+({\Phi^3+\Psi^3})$}, 
or the generic case {\small$(\Omega)+2(\Phi\Psi)+({\Phi^3+\Psi^3})
+\varphi^{-1}(\varphi(t))$}, where {\small$\varphi^{-1}(\varphi(t))$} consists of $24$ distinct points constituting the principal orbit of $S_4$ through the point $t$. By Theorem~\ref{thm}, in all cases, the sextic curves $F$ downstairs are tangent to the $1$-dimensional orbit at $F(0)$.

We only discuss the generic case {\small$\mathcal{Q}=\Omega+2(\Phi\Psi)+({\Phi^3+\Psi^3}) 
+\varphi^{-1}(\varphi(t))$}, where $t$ lies in one of principal orbits of $S_4$, and {\small$\mathcal{F}=\Omega+\varphi^{-1}(\varphi(t))$}. So, we obtain that $g(t)$ lies on one of the six free lines. The action of multiplying {\small$T_{1,1}^T$} on the right gives a permutation $(3654)$, while the action of multiplying {\small$T_{3,0}^T$} on the right gives a permutation $(1325)$ on the six free lines. So, $S_4$ acts on the six free lines transitively. Therefore, interchanging $t$ with another points of {\small$\varphi^{-1}(\varphi(t))$}, we may assume that $g(t)$ lies on the free line $\mathcal{L}_1$. Thus, we obtain the perpendicular conclusion in item $2$ from \eqref{eq-L12}. 

The  curves $F$ dowstairs are computed by {\scriptsize$g\cdot uv(u^4-v^4)/xy(x^4-y^4)h(x,y)$} and invariant theory, where {\small$h(x,y):=5p_{12}\Omega^4+p_{13}(\Phi\Psi)^3$} is a polynomial of degree $24$ vanishing at 
$\varphi^{-1}(\varphi(t))$. 

The case {\small$\mathcal{Q}=5(\Omega)+2(\Phi\Psi)+(\Phi^3+\Psi^3)$} 
is obtained by letting $t\rightarrow 0$ in the generic case.

\textbf{Case (4)}: Assume that $\deg g=31$. Then $\deg \mathcal{F}=42$ and $\mathcal{F}>(\Omega)$. Through $\phi_1$, we have 
{\small$\rchi_{V_{31}}=3\rchi_4+3\rchi_5+5\rchi_8$},  where 
{\scriptsize\begin{align*}
3W_4&=\Span\{y\Omega^3(\Phi^3+\Psi^3),-x\Omega^3(\Phi^3+\Psi^3)\}\oplus  \Span\{\Omega^3\Phi\Psi\frac{\partial \Omega}{\partial x},\Omega^3\Phi\Psi\frac{\partial \Omega}{\partial y}\}\oplus \Span\{(\Phi\Psi)^3\frac{\partial (\Phi\Psi)}{\partial x},(\Phi\Psi)^3\frac{\partial (\Phi\Psi)}{\partial y}\},\\
3W_5&=\Span\{y\Omega(\Phi\Psi)^3,-x\Omega(\Phi\Psi)^3\}\oplus \Span\{y\Omega(\Phi^3+\Psi^3)^2,-x\Omega(\Phi^3+\Psi^3)^2\}\oplus \Span\{\Omega\Phi\Psi(\Phi^3+\Psi^3)\frac{\partial \Omega}{\partial x}, \Omega\Phi\Psi(\Phi^3+\Psi^3)\frac{\partial \Omega}{\partial y}\}.
\end{align*}}
\!\!Meanwhile, {\small$\rchi_{V_{31}\otimes \mathbb{C}^2}=\rchi_{V_{31}}\cdot \rchi_4=3\rchi_1+3\rchi_2+5\rchi_3+8\rchi_6+8\rchi_7$},
where $3W_1$ (respectively, $3W_2$) is obtained from {\small$3W_4\otimes \mathbb{C}^2$} (respectively, {\small$3W_5\otimes \mathbb{C}^2$}). Since $\Omega$ is a common factor of any vector in $3W_5$, we omit $3W_2$ in the following. The other invariant subspace $3W_1$ is given by the following 
{\scriptsize\[3W_1=\Span\{\Omega^3(\Phi^3+\Psi^3)(y\otimes e_2-(-x)\otimes e_1), \Omega^3\Phi\Psi (\frac{\partial \Omega}{\partial x}\otimes e_2-\frac{\partial \Omega}{\partial y}\otimes e_1), (\Phi\Psi)^3(\frac{\partial (\Phi\Psi)}{\partial x}\otimes e_2-\frac{\partial (\Phi\Psi)}{\partial y}\otimes e_1)\}.\]}

A plane $g(x,y)$ in $3W_1$ is spanned by the two rows of the matrix \eqref{deg31 s4} with
\begin{small}$$\det g=\Omega^3(\Phi\Psi)(\Phi^3+\Psi^3)(6p_{12}\Omega^4+(8p_{13}-4p_{23})(\Phi\Psi)^3).$$\end{small}
It follows from Lemma \ref{linear system of D2} and the fact that the zeros of $\Phi\Psi$ and $\Phi^3+\Psi^3$ do not lie on the free lines that {\small$\mathcal{Q}=7(\Omega)+(\Phi\Psi)+(\Phi^3+\Psi^3)$} in the non-generic case, or in the generic case {\small$3(\Omega)+(\Phi\Psi)+(\Phi^3+\Psi^3)+\varphi^{-1}(\varphi(t))$}, where {\small$\varphi^{-1}(\varphi(t))$} consists of $24$ distinct points constituting the principal orbit of $S_4$ through the point $t$. By Theorem~\ref{thm}, in all cases, the sextic curves downstairs belong to exceptional transversal family.

We only discuss the generic case when {\small$\mathcal{Q}=3(\Omega)+(\Phi\Psi)+(\Phi^3+\Psi^3)+\varphi^{-1}(\varphi(t))$}, where $t$ lies in one of principal orbits of $S_4$, and {\small$\mathcal{F}=3(\Omega)+\varphi^{-1}(\varphi(t))$}. So, we obtain that $g(t)$ lies on one of  the six free lines. Similar to the preceding case, we may assume that $g(t)$ lies on the free line $\mathcal{L}_1$. Then the perpendicular conclusion in item $3$ is obtained from \eqref{eq-L12}. 

The curves $F$ downstairs are computed by {\scriptsize$g\cdot uv(u^4-v^4)/\Omega^3 h(x,y)$} and invariant theory, where {\small$h(x,y):=6p_{12}\Omega^4+(8p_{13}-4p_{23})(\Phi\Psi)^3$} is a polynomial of degree $24$ whose zeros are all points of {\small$\varphi^{-1}(\varphi(t))$}.
The non-generic case {\small$\mathcal{Q}=7(\Omega)+(\Phi\Psi)+(\Phi^3+\Psi^3)$ is obtained by letting $t\rightarrow 0$} in the generic case.
\end{proof}

\vspace{2mm}

\noindent {\small Department of Mathematics, Washington University, St. Louis, MO 63130\\
School of Mathematical Sciences, Beihang University, Beijing 102206, China\\
School of Mathematical Sciences and LPMC, Nankai University, Tianjin 300071, China}

\begin{tiny}\noindent E-mail: chi@wustl.edu;~~~\phantom{,,}xiezhenxiao@buaa.edu.cn;~~~\phantom{,,}xuyan2014@mails.ucas.ac.cn.\end{tiny}

\end{document}